\algrenewcommand\algorithmicrequire{\textbf{Input:}}
\newcommand{\CommentState}[1]{\Statex\hspace{\algorithmicindent}{\color{blue}// #1}}
\DeclareMathOperator*{\argmin}{arg\,min}
\DeclareMathOperator{\prox}{prox}
\DeclareMathOperator{\refl}{refl}
\DeclareMathOperator{\fix}{fix}
\DeclareMathOperator{\diag}{diag}
\DeclareMathOperator{\vect}{vect}
\newtheorem{remark}{Remark}
\newtheorem{proposition}{Proposition}
\newtheorem{assumption}{Assumption}
\newtheorem{lemma}{Lemma}
\newcommand{\norm}[1]{\left\lVert#1\right\rVert}
\newcommand{\expval}[1]{\mathbb{E}\left[#1\right]}
\newcommand{\km}{Krasnosel'ski\u{\i}-Mann\xspace}
\newcommand{\cmark}{\ding{51}}
\newcommand{\xmark}{\ding{55}}
\newcommand{\cvd}{\hfill$\square$}
\def\Top{{\mathcal{T}}}
\def\Tpr{{\mathcal{T}_{\mathrm{PR}}}}
\def\eps{{\mathbold{\epsilon}}}
\def\R{{\mathbb{R}}}
\def\N{{\mathbb{N}}}
\def\x{{\mathbold{x}}}
\def\z{{\mathbold{z}}}
\def\y{{\mathbold{y}}}
\def\w{{\mathbold{w}}}
\def\o{{\mathbold{o}}}
\def\uv{{\mathbold{u}}}
\def\v{{\mathbold{v}}}
\def\g{{\mathbold{g}}}
\def\c{{\mathbold{c}}}
\def\Hm{{\mathbold{H}}}
\def\Dm{{\mathbold{\Delta}}}
\def\A{{\mathbold{A}}}
\def\B{{\mathbold{B}}}
\def\P{{\mathbold{P}}}
\def\I{{\mathbold{I}}}
\def\B{{\mathbold{B}}}
\def\L{{\mathbold{L}}}
\def\T{{\mathbold{T}}}
\def\0{{\mathbf{0}}}
\def\1{{\mathbf{1}}}
\newif\ifarxiv
\begin{document}

\title{Asynchronous Distributed Optimization over Lossy Networks via Relaxed ADMM:\\ Stability and Linear Convergence}

\author{Nicola Bastianello, \IEEEmembership{Student Member, IEEE}, Ruggero Carli, \IEEEmembership{Member, IEEE}, Luca Schenato, \IEEEmembership{Fellow, IEEE}, and Marco Todescato, \IEEEmembership{Member, IEEE}
\thanks{N. Bastianello, R. Carli and L. Schenato are with the Department of Information Engineering (DEI), University of Padova, Italy. {\tt\small [bastian4|carlirug|schenato]@dei.unipd.it}.}
\thanks{M. Todescato is with Bosch Center for Artificial Intelligence. Renningen, Germany. {\tt\small mrc.todescato@gmail.com}.}
\thanks{This work has received funding from the Italian Ministry of Education,
University and Research (MIUR) through the PRIN project no. 2017NS9FEY
entitled ``Realtime Control of 5G Wireless Networks: Taming the
Complexity of Future Transmission and Computation Challenges''. The
views and opinions expressed in this work are those of the authors and
do not necessarily reflect those of the funding institution.}
}

\maketitle

\begin{abstract}
In this work we focus on the problem of minimizing the sum of convex cost functions in a distributed fashion over a peer-to-peer network. In particular, we are interested in the case in which communications between nodes are prone to failures and the agents are not synchronized among themselves. We address the problem proposing a modified version of the relaxed ADMM, which corresponds to the Peaceman-Rachford splitting method applied to the dual. By exploiting results from operator theory, we are able to prove the almost sure convergence of the proposed algorithm under general assumptions on the  distribution of communication loss and node activation events. By further assuming the cost functions to be strongly convex, we prove the linear convergence of the algorithm in mean to a neighborhood of the optimal solution, and provide an upper bound to the convergence rate. Finally, we present numerical results testing the proposed method in different scenarios.
\end{abstract}

\begin{IEEEkeywords}
distributed optimization, ADMM, asynchronous update, lossy communications, operator theory, Peaceman-Rachford splitting
\end{IEEEkeywords}

\section{Introduction}\label{sec:intro}

From classical control theory to more recent machine learning applications, many problems can be cast as optimization problems \cite{slavakis_modeling_2014} and, in particular, as large-scale optimization problems, given the increasing importance of cyber-physical systems in engineering applications. Stemming from classical optimization theory, in order to break down the computational complexity, parallel and distributed optimization methods have been the focus of a wide branch of research \cite{bertsekas_parallel_1989}. Within this vast topic, typical applications foresee computing nodes to cooperate, through local information exchanges, in order to achieve a desired common goal such as
\begin{equation}\label{eq:generic-distributed-problem}
	\min_{x} \sum_{i=1}^N f_i(x)
\end{equation}
where, usually, each $f_i$ is stored and known by one single node only.

While parallel optimization methods usually rely on a \textit{shared memory} architecture to implement the communication among agents, in distributed systems a \textit{message passing} architecture is employed, in which agents can exchange transmissions with a (subset) of the other agents. The message passing (or \textit{peer-to-peer}) architecture however introduces some issues due to the implementation of the transmission protocols. Indeed, distributed systems may suffer from communication failures, delays, and noise, on top of the possible asynchronism of the agents' activations. In this paper we are interested in solving the distributed problem~\eqref{eq:generic-distributed-problem} in the presence of communication (or packet) losses and asynchronism.

A first class of algorithms that has been proposed to solve distributed optimization problems is that of \textit{(sub)gradient-} and \textit{Newton-based} methods.

Distributed gradient descent algorithms in general combine local gradient descent steps with consensus averaging, see for example \cite{nedic_distributed_2009,nedic_convex_2010,nedic_constrained_2010,lobel_distributed_2011,lee_distributed_2013,jakovetic_convergence_2014,nedic_distributed_2015,shi_extra_2015,yuan_convergence_2016,xi_dextra_2017,nedic_achieving_2017,alaviani_distributed_2019}. These algorithms can handle many different scenarios, with smooth and non-smooth costs, over both fixed and time-varying topologies, and over directed and undirected graphs. In general, the convergence of gradient-based methods is sub-linear for convex costs and linear for strongly convex costs. The only method that can handle both packet losses and asynchronous activations of the nodes is \cite{alaviani_distributed_2019}; however, it requires a decreasing step-size and thus, implicitly, that the agents be synchronized.

Newton-based distributed algorithms have been introduced in \cite{wei_distributed_2013_1,wei_distributed_2013_2,varagnolo_newton_2016,mokhtari_network_2017} in synchronous and lossless scenarios. Recently the scheme in \cite{varagnolo_newton_2016} has been extended in \cite{bof_multiagent_2019} to asynchronous and lossy scenarios. However, in~\cite{bof_multiagent_2019}, the convergence is proved only locally and no characterization of the convergence rate is provided.

Other widely studied algorithms for solving distributed optimization problems are the \textit{alternating direction method of multipliers} (ADMM) and the more general \textit{relaxed} ADMM (R-ADMM). This class of algorithms can be defined either as augmented Lagrangian methods \cite{boyd_distributed_2010,eckstein_understanding_2015}, or, within an operator theoretical framework, as the dual of the (relaxed) Peaceman-Rachford splitting \cite{davis_convergence_2016,giselsson_linear_2017}. The latter formulation will be employed in this paper and we refer to \cite{ryu_primer_2016,bauschke_convex_2017} for a background on operator theory and its applications to convex optimization. Typically, the ADMM is derived from a Lagrangian-based formulation, while the R-ADMM is derived in an operator theoretical framework. However, it is known, see \cite{peng_arock_2016}, that the ADMM can be seen as a particular instance of the R-ADMM, obtained setting one of the free parameters equal to a specific value. This slightly reduces the complexity of the updating equations, but the higher flexibility of R-ADMM allows to obtain better convergence properties.

The convergence of ADMM and R-ADMM for convex optimization problems is in general sub-linear, see \textit{e.g.} \cite{boyd_distributed_2010,davis_convergence_2016}, and the same applies for distributed optimization. In an asynchronous scenario, sub-linear convergence can be similarly proved, adopting both the augmented Lagrangian, see \cite{wei_convergence_2013,mota_dadmm_2013,chang_asynchronous_2016_1}, and the splitting operator formulations, see \cite{peng_arock_2016,bianchi_coordinate_2016}. Remarkably, assuming the functional costs are strongly convex, the  distributed implementations of ADMM introduced in \cite{shi_linear_2014,makhdoumi_convergence_2017,iutzeler_explicit_2016}, have been shown to attain linear and global convergence when communications are synchronous and reliable. These results have been extended to asynchronous schemes in \cite{chang_asynchronous_2016_1, chang_asynchronous_2016_2}, though the proposed analysis is limited to master-slave architectures. To the best of our knowledge, \cite{majzoobi_analysis_2018} is the only paper proving convergence of the synchronous ADMM in presence of lossy communications, modeled as i.i.d. binary random variables. However, no characterization of the convergence rate is provided.

The authors of \cite{peng_arock_2016} have derived the R-ADMM within the framework of the ARock algorithm, introduced in the context of parallel computing where agents share a common memory. In \cite{peng_arock_2016} and \cite{hannah_unbounded_2018}, it is shown that the ARock framework successfully handles asynchronous updates and delayed information attaining a sub-linear rate of convergence. However, due to the reliance of the convergence proof on the common memory, ARock it is not suitable to deal with unreliable communications. In \cite{deng_global_2016,giselsson_linear_2017}, the general R-ADMM algorithm is provably shown to be linearly and globally convergent, provided that the dual problem is strongly convex. This result has been extended to randomized scenarios in \cite{bianchi_coordinate_2016,combettes_stochastic_2019}. Unfortunately, strong convexity of the dual problem, is satisfied in the networked optimization scenario of interest only if master-slave architectures are employed, thus preventing the use of fully distributed schemes.

In this paper we present and analyze a modified version of the R-ADMM algorithm which is amenable of distributed implementation in peer-to-peer networks with unreliable communications and asynchronous operations of the agents. The theoretical contribution is twofold:
\begin{itemize}
	\item Deriving the R-ADMM as an application of the Peaceman-Rachford splitting, we are able to exploit recent results on randomized nonexpansive operators to establish the almost sure convergence of the proposed algorithm, provided that mild assumptions on the asynchronous and lossy nature of the network are satisfied.
	\item Further assuming that the local costs are strongly convex and twice differentiable, we show that the convergence is locally linear in mean, and provide an upper bound to the convergence rate.
\end{itemize}

A preliminary version of this paper has appeared in \cite{bastianello_distributed_2018}, where however no asynchronous updates are considered, and no convergence rate analysis is provided.

The remainder of the paper is organized as follows. Section~\ref{sec:background} reviews some concepts in operator theory, and the R-ADMM. Section~\ref{sec:synchronous-ADMM} describes the distributed implementation of R-ADMM to solve \eqref{eq:generic-distributed-problem} and its convergence. Section~\ref{sec:robust-ADMM} analyzes the convergence properties of R-ADMM under asynchronous updates and communication failures. Finally, Section~\ref{sec:simulation} presents some numerical results and Section~\ref{sec:conclusions} concludes the paper.

\section{Preliminaries}\label{sec:background}
This Section collects some preliminary definitions in graph theory \cite{mesbahi_graph_2010} and convex analysis \cite{rockafellar_convex_1970}, as well as a brief review of the necessary background regarding operator theory \cite{bauschke_convex_2017} and the R-ADMM \cite{boyd_distributed_2010,giselsson_linear_2017}.

\subsection{Notation and useful definitions}
We denote by $\otimes$ the Kronecker product, by $\Lambda(\mathbold{M})$ the spectrum of a matrix $\mathbold{M}$, and by $\operatorname{dist}(x,\mathbb{D}) = \inf_{y \in \mathbb{D}} \norm{x - y}$ the distance between point $x \in \R^n$ and the set $\mathbb{D} \subset \R^n$. $\1_n$ (resp. $\0_n$) denotes the $n$-dimensional vector of all ones (resp. zeros). By $M \succ 0$ we denote that the symmetric matrix $M$ is positive definite.

We denote a graph by $\mathcal{G}=(\mathcal{V},\mathcal{E})$, where $\mathcal{V}$ is the set of $N$ vertices, labeled $1$ through $N$, and $\mathcal{E}$ is the set of undirected edges. For $i \in \mathcal{V}$, by $\mathcal{N}_i$ we denote the set of neighbors of node $i$ in $\mathcal{G}$, namely, $\mathcal{N}_i =\left\{j \in \mathcal{V} \,:\, (i,j) \in \mathcal{E} \right\}$. The degree of each node $i \in \mathcal{V}$ is denoted by $d_i = |\mathcal{N}_i|$. Moreover, in the following we write $M = 2|\mathcal{E}|$, \textit{i.e.}, $M$ counts twice the number of edges in the network.

Consider the scalar function $f : \R^n \to \R \cup \{+\infty\}$. Then $f$ is said to be \textit{closed} if $\forall a \in \R$ the set $\{x \in \operatorname{dom}(f) \ |\ f(x) \leq a\}$ is closed, and it is \textit{proper} if it does not attain $-\infty$, see \cite{bauschke_convex_2017}. We denote by $\Gamma_0(\R^n)$ the class of convex, closed and proper functions from $\R^n$ to $\R \cup \{+\infty\}$. We define the \textit{convex conjugate} of $f \in \Gamma_0(\R^n)$ as $f^*(w) = \sup_{x \in \R^n} \{ \langle  w, x \rangle - f(x) \}$ for $w \in \R^n$. The convex conjugate belongs to $\Gamma_0(\R^n)$.
Finally, a function $f \in \Gamma_0(\R^n)$ is said to be \textit{$m$-strongly convex}, $m > 0$, if $f - (m/2) \norm{\cdot}^2$ is convex. If $f$ is twice differentiable, $f \in \mathcal{C}^2$, then strong convexity implies that $\nabla^2 f(x) \succ m I$ for all $x \in \R^n$.

\subsection{Notions on operator theory}\label{subsec:operator-theory}
By \textit{operator}\footnote{The term \textit{mapping} should actually be used, but in the literature the two are usually employed interchangeably.} on $\R^n$ we mean a map $\Top : \R^n \to \R^n$ that assigns to each point $x$ in $\R^n$ the corresponding point $\Top x \in \R^n$. Given an operator $\Top$, by $\fix(\Top)$ we denote the set of its \textit{fixed points}, that is,  $\fix(\Top) = \{ \bar{x} \in \R^n \ | \ \bar{x} = \Top \bar{x} \}$.

\noindent An operator $\Top$ is \textit{Lipschitz continuous} if there exists $\zeta \geq 0$ such that $\norm{\Top x - \Top y} \leq \zeta \norm{x - y}$ holds for any two $x, y \in \R^n$. In particular, $\Top$ is said to be \textit{nonexpansive} if $\zeta = 1$, and \textit{contractive} if $\zeta \in [0,1)$.
An operator $\Top$ is \textit{averaged} if there exist $\alpha \in (0,1)$ and $\mathcal{R}$ nonexpansive such that we can write $\Top = (1-\alpha) \mathcal{I} + \alpha \mathcal{R}$. Notice that $\fix(\Top) = \fix(\mathcal{R})$.

\noindent An operator $\Top$ is said to be \textit{affine} if there exist $T \in \R^{n \times n}$ and $u \in \R^n$ such that we can write $\Top x = T x + u$, $x \in \R^n$.

Given a function $f \in \Gamma_0(\R^n)$, we define the corresponding \textit{proximal operator} as
$$
	\prox_{\rho f}(x) = \argmin_y \left\{ f(y) + \frac{1}{2\rho} \norm{y - x}^2 \right\},
$$
where $\rho > 0$ is called penalty parameter, and the \textit{reflective operator} as $\refl_{\rho f}(x) = 2\prox_{\rho f}(x) - x$. The proximal is $1/2$-averaged\footnote{This property is also called \textit{firm nonexpansiveness}.} while the reflective is nonexpansive. Observe that the fixed points of $\prox_{\rho f}$ and $\refl_{\rho f}$ coincide with the minimizers of $f$. In general, given $\Top$ nonexpansive, the algorithm for finding its fixed points  is the Krasnosel'skii-Mann (KM) iteration, see \cite{bauschke_convex_2017},
\begin{equation}\label{eq:km-iteration}
	x(k+1) = (1-\alpha) x(k) + \alpha \Top x(k).
\end{equation}

Consider now the convex optimization problem 
\begin{equation}\label{eq:sumFunctions}
	\min_{x \in \R^n} \left\{ f(x) + g(x) \right\}
\end{equation}
with $f, g \in \Gamma_0(\R^n)$. Let us define the \textit{Peaceman-Rachford operator}
$$
	\Tpr = \refl_{\rho g} \circ \refl_{\rho f}
$$
such that the minimizers of the optimization problem are $\prox_{\rho f}(\fix(\Tpr))$. The \km iteration applied to the $\Tpr$ on the auxiliary variable $z$ yields the so called \textit{Peaceman-Rachford splitting} (PRS):
\begin{equation}\label{eq:prs}
	z(k+1) = (1-\alpha) z(k) + \alpha \Tpr z(k), \quad k \in \N
\end{equation}
which is guaranteed to converge to a fixed point of $\Tpr$ if $\alpha \in (0,1)$ and $\rho > 0$, see \cite{bauschke_convex_2017}; a minimizer $\bar{x}$ to \eqref{eq:sumFunctions} is recovered from the limit $\bar{z}$ of the iterate $z(k)$ by computing $\bar{x}=\prox_{\rho g}(\bar{z})$. As show in \cite{bauschke_convex_2017}, the iteration \eqref{eq:prs} can be conveniently implemented by the following updates
\begin{subequations}\label{eq:iterates-splitting}
\begin{align}
	x(k+1) &= \prox_{\rho f}(z(k)) \label{eq:prs-1} \\
	y(k+1) &= \prox_{\rho g}(2x(k+1) - z(k)) \label{eq:prs-2} \\
	z(k+1) &= z(k) + 2\alpha (y(k+1) - x(k+1)) \label{eq:prs-3}
\end{align}
\end{subequations}
where $y$ is an additional auxiliary variable.

\subsection{Relaxed ADMM}\label{subsec:R-ADMM}
Consider the following optimization problem
\begin{equation}\label{eq:admm-problem}
\begin{split}
	&\min_{x \in \R^n, y \in \R^m} \left\{ f(x) + g(y) \right\} \\
	&\qquad \text{s.t.}\ A x + B y = c
\end{split}
\end{equation}
where $f \in \Gamma_0(\R^n)$, $g \in \Gamma_0(\R^m)$, $A \in \R^{p \times n}$, $B \in \R^{p \times m}$ and $c \in \R^p$. We assume that \eqref{eq:admm-problem} admits  a finite solution. The dual problem of~\eqref{eq:admm-problem} is (see \cite{peng_arock_2016})
\begin{equation}\label{eq:admm-dual-problem}
	\min_{w \in \R^p} \left\{ d_f(w) + d_g(w) \right\}
\end{equation}
where
$$
	d_f(w) = f^*(A^\top w) \quad \text{and} \quad d_g(w) = g^*(B^\top w) - \langle w, c \rangle.
$$
The \textit{relaxed alternating direction method of multipliers} (R-ADMM) can be derived applying the PRS~\eqref{eq:prs} to solve~\eqref{eq:admm-dual-problem}. In \cite{davis_convergence_2016}, it has been shown that an efficient implementation of \eqref{eq:iterates-splitting} is characterized by the following updates, which involve the primal variables $x$ and $y$, see also \ifarxiv Appendix~\ref{app:derivation-splitting-admm}\else \cite{bastianello_asynchronous_2019}\fi:
\begin{subequations}\label{eq:r-admm}
\begin{align}
	x(k+1) &= \argmin_x \left\{ f(x) - \langle z(k), A x \rangle + \frac{\rho}{2} \norm{A x}^2 \right\} \label{eq:r-admm-x} \\
	w(k+1) &= z(k) - \rho A x(k+1) \label{eq:r-admm-w} \\
	y(k+1) &= \argmin_y \Big\{ g(y) - \langle 2w(k+1)-z(k), B y \rangle + \nonumber \\ &+ \frac{\rho}{2} \norm{By - c}^2 \Big\} \label{eq:r-admm-y} \\
	v(k+1) &= 2w(k+1) - z(k) - \rho (By(k+1) - c) \label{eq:r-admm-v} \\
	z(k+1) &= z(k) + 2\alpha (v(k+1) - w(k+1)). \label{eq:r-admm-z}
\end{align}
\end{subequations}
where \eqref{eq:r-admm-x}, \eqref{eq:r-admm-w} implements \eqref{eq:prs-1}, while \eqref{eq:r-admm-y}, \eqref{eq:r-admm-v} implements \eqref{eq:prs-2}.
The convergence of the PRS guarantees, in turn, the convergence of $\{ x(k) \}_{k \in \N}$ and $\{ y(k) \}_{k \in \N}$ to an optimal solution of the primal~\eqref{eq:admm-problem}. Indeed problem \eqref{eq:admm-problem} is convex with linear constraints and strong duality holds.

The R-ADMM is a generalized version of the classical ADMM described \textit{e.g.} in \cite{boyd_distributed_2010}; indeed it is possible to see that when $\alpha = 1/2$ the former recovers the latter, see Remark \ref{rem:Lagrangian-based} in Section \ref{sec:synchronous-ADMM}.

\section{R-ADMM for Distributed Optimization}\label{sec:synchronous-ADMM}
In this Section we formulate the distributed optimization problem of interest and we show how the R-ADMM is suited to solve it.

\subsection{Problem formulation}
Consider the undirected, connected graph $\mathcal{G}=(\mathcal{V},\mathcal{E})$ with $N$ nodes. We are interested in solving 
\begin{equation}\label{eq:distributed-problem}
	\min_{x \in \R^n} \sum_{i=1}^N f_i(x)
\end{equation}
over the network $\mathcal{G}$ where the cost function $f_i \in \Gamma_0(\R^n)$ is known only to the $i$-th node, and nodes can communicate only with their neighbors. We assume that \eqref{eq:distributed-problem} admits at least one finite solution. 

In order to apply the R-ADMM to problem \eqref{eq:distributed-problem} we reformulate it as follows. First, a local copy $x_i \in \R^n$ of the decision variable $x$ is assigned to each agent. Therefore, as long as $\mathcal{G}$ is connected, problem~\eqref{eq:distributed-problem} is equivalent to
\begin{equation}\label{eq:distributed-primal}
\begin{split}
	\min_{x_i, i \in \mathcal{V}} & \sum_{i=1}^N f_i(x_i) \\
	\text{s.t.} \ \ & x_i = x_j \quad \forall (i,j) \in \mathcal{E}.
\end{split}
\end{equation}
Indeed the consensus constraints $x_i = x_j$ impose that any optimal solution of~\eqref{eq:distributed-primal} satisfies $x_1 = \ldots = x_N = \bar{x}$, with $\bar{x}$ a solution to~\eqref{eq:distributed-problem}. Introducing the \textit{bridge variables} $y_{ij}$ and $y_{ji}$ for each edge $(i,j) \in \mathcal{E}$, the consensus constraints can be equivalently rewritten as
\begin{equation}\label{eq:BridgeVariables}
	x_i = y_{ij}, \quad x_j = y_{ji} \quad \text{and} \quad y_{ij} = y_{ji} \quad \forall (i,j) \in \mathcal{E}.
\end{equation}
Defining the vectors $\x = [x_1^\top, \ldots, x_N^\top]^\top \in \R^{nN}$ and $\y = [\ldots, \{ y_{ij}^\top \}_{j \in \mathcal{N}_i}, \ldots]^\top \in \R^{nM}$, the constraints in \eqref{eq:BridgeVariables} can be compactly rewritten as\footnote{Hereafter, boldface letters will denote vectors and matrices built stacking local quantities.}
$$
	\A \x + \B \y = \0 \quad \text{and} \quad \y = \P \y
$$
where 
$$
	\A = \begin{bmatrix}
		\1_{d_1} & \0_{d_1} & \0_{d_1} & \cdots & \0_{d_1} \\
		\0_{d_2} & \1_{d_2} & \0_{d_2} & \cdots & \0_{d_2} \\
		& & \ddots & & \\
		& & & \ddots & \\
		\0_{d_N} & \cdots & \cdots & \0_{d_N} & \1_{d_N}
	\end{bmatrix} \otimes I_n \in \R^{nM \times nN},
$$
$\B = -\I_{nM}$, $\P$ is a permutation matrix that swaps $y_{ij}$ with $y_{ji}$. We remark that $\A$ in general is not full row rank.

Finally, define $f(\x) = \sum_{i=1}^N f_i(x_i)$ and $g(\y) = \iota_{(\I - \P)}(\y)$, where the indicator function $\iota_{(\I - \P)}(\y)$ is equal to $0$ if $(\I - \P) \y = \0$ and $+\infty$ otherwise. Hence problem \eqref{eq:distributed-primal}  can be equivalently formulated as
\begin{align}\label{eq:primal-indicator-f}
\begin{split}
	& \min_{\x,\y}\left\{f(\x)+\iota_{(\I-\P)}(\y)\right\}\\
	& \ \text{s.t.}\ \ \A\x-\y=0.
\end{split}
\end{align}
Problem~\eqref{eq:primal-indicator-f} is in the form of~\eqref{eq:admm-problem} and thus we can apply the R-ADMM algorithm to solve it.

\subsection{Distributed R-ADMM}
The particular separable structure of the functions $f(\x)$, $g(\y)$, and of matrices $\A$, $\B$, allows us to derive simplified equations for the R-ADMM algorithm that involve only the update of the $\x$ and $\z$ variables, and that are amenable of distributed implementations.

\noindent Indeed, it can be shown that the equations~\eqref{eq:r-admm} applied to problem \eqref{eq:primal-indicator-f} reduce to
\begin{subequations}\label{eq:distributed-admm}
\begin{align}
	x_i(k+1) &= \argmin_{x_i} \bigg\{ f_i(x_i) + \nonumber \\ & \qquad\quad - \langle \sum_{j \in \mathcal{N}_i} z_{ij}(k), x_i \rangle +  \frac{\rho d_i}{2} \norm{x_i}^2 \bigg\} \label{eq:distributed-admm-x} \\
	z_{ij}(k+1) &= (1-\alpha) z_{ij}(k) - \alpha z_{ji}(k) + 2\alpha\rho x_j(k+1) \label{eq:distributed-admm-z}
\end{align}
\end{subequations}
for all $i \in \mathcal{V}$ and $j \in \mathcal{N}_i$. See Appendix~\ref{app:derivation-distributed-admm} for the derivation. Observe that, since $\B=-\I$, the dimension of $\z$ is equal to the dimension of $\y$, \textit{i.e.}, for all $(i,j) \in \mathcal{E}$ there are the variables $z_{ij}$ and $z_{ji}$. Interestingly, one can see that \eqref{eq:distributed-admm-x}  can be rewritten as
\begin{equation}\label{eq:primal-update}
	x_i(k+1) = \prox_{f_i / (\rho d_i)}\left( [\A^\top \z(k)]_i / (\rho d_i) \right),
\end{equation}
where $[\A^\top \z(k)]_i$ denotes the $i$-th component of the vector $[\A^\top \z(k)]$, see Lemma~\ref{lem:x_i-prox} in Appendix~\ref{app:linear_convergence}. A straightforward implementation of~\eqref{eq:distributed-admm} has the node $i$ storing and updating $x_i$ and $\{ z_{ij} \}_{j \in \mathcal{N}_i}$. Notice that, while~\eqref{eq:distributed-admm-x} can be computed using only local information, \textit{i.e.}, the local cost $f_i$ and $\{ z_{ij} \}_{j \in \mathcal{N}_i}$, update~\eqref{eq:distributed-admm-z} requires communication with $i$'s neighbors, that is, transmission of $z_{ji}(k)$ and $x_j$ from node $j \in \mathcal{N}_i$. In particular we assume that node $j$ sends to node $i$ the packet
$$
	q_{j \to i} = -z_{ji}(k) + 2\rho x_j(k+1)
$$
and, consequently, node $i$ performs the update
\begin{equation}\label{eq:distributed-admm-z-with-q}
	z_{ij}(k+1) = (1-\alpha) z_{ij}(k) + \alpha q_{j \to i}.
\end{equation}
Algorithm~\ref{alg:distributed-admm} describes the implementation of the distributed R-ADMM.

\begin{algorithm}[!ht]
\caption{Distributed R-ADMM.}
\label{alg:distributed-admm}
\begin{algorithmic}[1]
	\Require For each node $i$, initialize $x_i(0)$ and $z_{ij}(0)$, $j \in \mathcal{N}_i$.
	\For{$k = 0,1,\ldots$ every agent $i$}
	\CommentState{local update}
	\State compute $x_i(k+1)$ according to \eqref{eq:distributed-admm-x}
	\CommentState{transmission}
	\For{each neighbor $j \in \mathcal{N}_i$}
	\State compute and transmit the temporary variable $q_{i \to j}$
	\EndFor
	\State gather $q_{j \to i}$ from each neighbor $j$
	\CommentState{auxiliary update}
	\State compute $z_{ij}(k+1)$ according to~\eqref{eq:distributed-admm-z-with-q}
	\EndFor
\end{algorithmic}
\end{algorithm}

The following convergence result is a direct consequence of the convergence of the Peaceman-Rachford splitting, proved \textit{e.g.} in \cite[Th.~26.11]{bauschke_convex_2017}.

\begin{proposition}\label{cr:convergence}
Consider problem~\eqref{eq:distributed-problem} with $f_i \in \Gamma_0(\R^n)$, and let $ 0 < \alpha < 1$ and $\rho > 0$. Then, for any initial condition $\z(0) \in \R^{nM}$, the trajectories $k \mapsto x_i(k)$, $i \in \mathcal{V}$, generated by Algorithm~\ref{alg:distributed-admm}, converge to an optimal solution $\bar{x}$ of \eqref{eq:distributed-problem}, \textit{i.e.},
$$
	\lim_{k \to \infty} x_i(k) = \bar{x}, \qquad \forall i \in \mathcal{V}.
$$
\end{proposition}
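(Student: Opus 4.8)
The plan is to recognize Algorithm~\ref{alg:distributed-admm} as the R-ADMM~\eqref{eq:r-admm} specialized to the reformulated problem~\eqref{eq:primal-indicator-f}, to invoke the convergence of the underlying Peaceman--Rachford splitting, and then to translate the resulting limit back into the variables $x_i$ of~\eqref{eq:distributed-problem}.

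First I would confirm (this is the content of Appendix~\ref{app:derivation-distributed-admm}) that the separable structure of $f(\x)=\sum_i f_i(x_i)$, of $g(\y)=\iota_{(\I-\P)}(\y)$, and of the matrices $\A$, $\B=-\I$ in~\eqref{eq:primal-indicator-f} makes the general updates~\eqref{eq:r-admm} collapse exactly to the scalar updates~\eqref{eq:distributed-admm}, with the $\x$-update in the proximal form~\eqref{eq:primal-update}. Since $f \in \Gamma_0(\R^{nN})$ and $g \in \Gamma_0(\R^{nM})$, problem~\eqref{eq:primal-indicator-f} is a bona fide instance of~\eqref{eq:admm-problem}; hence the iteration on $\z$ is precisely the PRS~\eqref{eq:prs} run on the dual~\eqref{eq:admm-dual-problem} of~\eqref{eq:primal-indicator-f}, whose summands $d_f,d_g$ again lie in $\Gamma_0(\R^{nM})$.

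Next I would verify the hypotheses for convergence of~\eqref{eq:prs}: with $0<\alpha<1$ and $\rho>0$ this is a \km iteration of the nonexpansive operator $\Tpr$, which converges to some $\bar\z\in\fix(\Tpr)$ from any $\z(0)$ as soon as $\fix(\Tpr)\neq\emptyset$. Nonemptiness holds because~\eqref{eq:distributed-problem}, and therefore~\eqref{eq:primal-indicator-f}, admits a finite solution and its constraints are affine, so strong duality holds and the dual~\eqref{eq:admm-dual-problem} attains a minimizer; a dual minimizer corresponds to a fixed point of $\Tpr$ (equivalently, $d_f+d_g$ is proper with nonempty set of minimizers). Consequently $\z(k)\to\bar\z$, and since the proximal operator is nonexpansive, hence continuous, the primal iterate in~\eqref{eq:primal-update} converges componentwise: $x_i(k)\to\bar x_i := \prox_{f_i/(\rho d_i)}\!\big([\A^\top\bar\z]_i/(\rho d_i)\big)$.

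Finally, by the PRS/ADMM correspondence the pair $(\bar\x,\bar\y)$ recovered at the fixed point is primal-feasible and optimal for~\eqref{eq:primal-indicator-f}, so $\A\bar\x=\bar\y$ and $(\I-\P)\bar\y=\0$; reading these constraints edgewise gives $\bar x_i=\bar y_{ij}=\bar y_{ji}=\bar x_j$ for every $(i,j)\in\mathcal{E}$, whence by connectedness of $\mathcal{G}$ all the $\bar x_i$ equal a common vector $\bar x$, and optimality forces $\bar x$ to minimize $\sum_i f_i$. This yields $x_i(k)\to\bar x$ for all $i\in\mathcal{V}$. The one point I would be careful about — more a matter of bookkeeping than a genuine obstacle — is that $\A$ in~\eqref{eq:primal-indicator-f} is not full row rank, so the argument must rely only on strong duality for affine constraints (which is insensitive to the rank of $\A$) and never on invertibility of $\A\A^\top$; given the cited fixed-point result, everything else is routine.
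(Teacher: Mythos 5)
Your proposal is correct and follows essentially the same route as the paper: Algorithm~\ref{alg:distributed-admm} is identified as the PRS~\eqref{eq:prs} applied to the dual of~\eqref{eq:primal-indicator-f} (the reduction of Appendix~\ref{app:derivation-distributed-admm}), convergence is obtained from the standard \km/PRS fixed-point result, and the primal limit is recovered through the proximal map and the consensus constraints. The extra care you take about nonemptiness of $\fix(\Tpr)$ via strong duality and about not using full row rank of $\A$ matches the paper's (implicit) reasoning, so no gap remains.
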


\begin{remark}
Notice that the statement of Proposition~\ref{cr:convergence} considers only the initial condition of variable $\z$ and not of $\x$. The reason is related to update \eqref{eq:distributed-admm-x} where it is clear that $\x(1)$ depends only on $\z(0)$ and not on $\x(0)$.
\end{remark}

\begin{remark}[\bf Comparison with ARock {\cite{peng_arock_2016}}]\label{rem:ARock}
The formulation of the R-ADMM presented in Algorithm~\ref{alg:distributed-admm} is derived using the same idea employed in \cite{peng_arock_2016} of interpreting the R-ADMM as an application of the PRS to the dual problem. Also the R-ADMM algorithm proposed in \cite{peng_arock_2016} to solve problem \eqref{eq:generic-distributed-problem} (see section 2.6.2), involves only the use of variables $x_i$, $z_{ij}$ but the actual implementation differs from Algorithm~\ref{alg:distributed-admm}.
Additionally, it is worth mentioning that the authors of \cite{peng_arock_2016} have derived the R-ADMM within the framework of the ARock algorithm, introduced in the context of parallel computing where agents share a common memory. In \cite{peng_arock_2016} and \cite{hannah_unbounded_2018} it is shown that the ARock framework successfully handles asynchronous updates and (possibly unbounded) delayed information. However, due to the reliance of the convergence proof on the common memory, ARock is not suitable to deal with the lossy and asynchronous framework of interest in this paper.
\end{remark}

\subsection{Linear local convergence for strongly convex costs}\label{subsec:linear-convergence-synchronous}
In this section we prove the local linear convergence of the distributed R-ADMM under the assumption that the local costs are strongly convex and twice continuously differentiable. Notice that, under these assumptions there exists a unique minimizer $x^*$ for problem \eqref{eq:distributed-problem}. 

It is worth stressing that for particular distributed and centralized formulations of the R-ADMM (see \cite{shi_linear_2014, makhdoumi_convergence_2017, giselsson_linear_2017}), especially of the classical ADMM, it is actually possible to prove global linear convergence under milder assumptions than the ones made in this section. However the results in \cite{shi_linear_2014, makhdoumi_convergence_2017, giselsson_linear_2017} can be applied only partially to the scenario of our interest and we refer to  Remark~\ref{rem:Conv-rate} for a detailed discussion.

The idea behind the result of Proposition~\ref{pr:local-linear-convergence} below is that, in a neighborhood of the optimal solution, the strong convexity and double continuous differentiability of the local costs allow us to rewrite the Peaceman-Rachford splitting applied to the dual of the distributed problem as a \textit{perturbed affine operator}. Indeed, it is possible to write the update of the auxiliary variables in compact form as
\begin{equation}\label{eq:affine-perturbed-z}
	\z(k+1) = \T \z(k) + \uv + \o'(\x(k+1) - \x^*)
\end{equation}
where $\x^* = \1_N \otimes x^*$, and $\T \in \R^{nM \times nM}$ is such that
\begin{equation}\label{eq:T}
	\T = (1-\alpha) \I - \alpha \P + 2\alpha\rho \P \A \Hm^{-1} \A^\top,
\end{equation}
with $\Hm = \operatorname{blk\,diag} \left\{ \rho d_i I_n + \nabla^2 f_i(x^*) \right\}$, $\uv \in \R^{nM}$ is a constant vector depending on the gradient and Hessian of $f(\x)$ evaluated at $\x^*$, and $\o' : \R^{nN} \to \R^{nM}$ is a vanishing function for $\x$ approaching the optimum, that is,
$$
	\norm{\o'(\x(k+1) - \x^*)} / \norm{\x(k+1) - \x^*} \to 0
$$ 
as $\x(k+1) \to \x^*$. All the details can be found in Appendix~\ref{app:linear_convergence}.
 
In Lemma \ref{lem:T_properties} in Appendix~\ref{app:linear_convergence}, it is established that the eigenvalues of $\T$ are either equal to $1$ or strictly inside the unitary circle, with the eigenvalues equal to $1$ all being semi-simple. The largest (in absolute value) eigenvalue smaller than $1$ of $\T$ is an upper bound to the convergence rate of the $x_i$'s trajectories toward the optimum. This fact is formally stated in the following Proposition.

\begin{proposition}\label{pr:local-linear-convergence}
Assume that the local costs $f_i$ are strongly convex and twice continuously differentiable. Then there exists $\epsilon > 0$ such that, if $\operatorname{dist}(\z(0), \fix(\Tpr)) \leq \epsilon$, then Algorithm~\ref{alg:distributed-admm} converges linearly fast, \textit{i.e.}
$$
	\norm{x_i(k) - x^*} \leq C \gamma^k \qquad i \in \mathcal{V},
$$
with $C > 0$, and $0 \leq \gamma \leq \gamma_{\mathrm{M}} < 1$, where $\gamma_{\mathrm{M}}$ is the largest eigenvalue of $\T$ different from one, \textit{i.e.}
$$
	\gamma_{\mathrm{M}} = \max \left\{ |\lambda| \ \mathrm{s.t.} \ \lambda \in \Lambda(\T), \ |\lambda| < 1 \right\}.
$$
\end{proposition}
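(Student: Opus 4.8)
The plan is to leverage the perturbed-affine representation \eqref{eq:affine-perturbed-z}--\eqref{eq:T} together with the spectral properties of $\T$ from Lemma~\ref{lem:T_properties}. First I would decompose the state space $\R^{nM}$ as the direct sum of the eigenspace $\mathcal{W}_1$ associated with the eigenvalue $1$ of $\T$ (which is semi-simple, so the generalized and ordinary eigenspaces coincide) and a complementary $\T$-invariant subspace $\mathcal{W}_{<1}$ on which the spectral radius of $\T$ equals $\gamma_{\mathrm{M}} < 1$. Writing $\z(k) = \bar{\z} + \bm{\xi}(k)$ where $\bar{\z}$ is the projection of the limit point onto $\mathcal{W}_1$ (a fixed point of $\Tpr$), and projecting \eqref{eq:affine-perturbed-z} onto the two subspaces, the $\mathcal{W}_1$-component is unaffected by $\T$ but is still driven by the vanishing perturbation, while the $\mathcal{W}_{<1}$-component $\bm{\xi}_{<1}(k)$ contracts at rate essentially $\gamma_{\mathrm{M}}$.

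Next I would set up the local contraction estimate. Fix any $\gamma$ with $\gamma_{\mathrm{M}} \le \gamma < 1$; there is a norm (or a finite power of $\T$) in which $\|\T|_{\mathcal{W}_{<1}}\| \le \gamma$. Because $\o'$ is $o(\|\x(k+1)-\x^*\|)$ and, by \eqref{eq:primal-update}/Lemma~\ref{lem:x_i-prox}, the primal error $\|\x(k+1)-\x^*\|$ is Lipschitz in $\|\z(k)-\bar{\z}\|$ (the prox of a strongly convex $\mathcal{C}^2$ function is Lipschitz, in fact nonexpansive, and $\x^* = \prox(\cdot)$ evaluated at the fixed point), we get $\|\o'(\x(k+1)-\x^*)\| \le \delta\big(\|\bm{\xi}(k)\|\big)\,\|\bm{\xi}(k)\|$ with $\delta(r)\to 0$ as $r\to 0$. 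Choosing $\epsilon$ small enough that $\delta(\cdot)$ stays below $(1-\gamma)/(2\kappa)$ on the ball of radius $\epsilon$ (with $\kappa$ absorbing the norm-equivalence and projection constants), a standard induction shows that if $\operatorname{dist}(\z(0),\fix(\Tpr)) \le \epsilon$ then $\|\bm{\xi}(k)\|$ stays in that ball for all $k$ and $\|\bm{\xi}_{<1}(k)\| \le C_0\, \gamma^k$; summing the perturbations shows the $\mathcal{W}_1$-component also converges, and in fact $\|\z(k)-\bar{\z}\| \le C_1 \gamma^k$ possibly after enlarging $\gamma$ by an arbitrarily small amount (or keeping $\gamma_{\mathrm{M}}$ if one is careful with the Jordan-free structure). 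Finally, transferring back to the primal variables via the Lipschitz bound on $\prox_{f_i/(\rho d_i)}$ in \eqref{eq:primal-update} yields $\|x_i(k) - x^*\| \le C \gamma^k$ for every $i$, which is the claim.

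The main obstacle is handling the eigenvalue-$1$ subspace correctly: $\T$ is not a contraction, so one cannot simply iterate a Banach fixed-point argument on $\z$. One must argue that the dynamics restricted to $\mathcal{W}_1$ are \emph{neutrally stable but forced only by a summable (geometrically decaying) perturbation}, so the iterate still converges — and, crucially, that the perturbation's decay rate is inherited from the contracting component rather than being independent. This requires care because the feedback $\o'$ depends on $\x(k+1)$, which depends on the \emph{full} $\z(k)$ including its $\mathcal{W}_1$-part; one needs the observation that along the synchronous R-ADMM the relevant primal error is governed by $\A^\top \z(k)$, and $\A^\top$ annihilates the troublesome directions (this is exactly why the eigenvalue-$1$ eigenspace is inert), so the perturbation effectively sees only $\bm{\xi}_{<1}(k)$. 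Making that reduction rigorous — essentially showing $\|\x(k+1)-\x^*\| = O(\|\bm{\xi}_{<1}(k)\|)$ rather than $O(\|\bm{\xi}(k)\|)$ — is the delicate point; once it is in place, the rest is the routine perturbed-linear-recursion estimate sketched above, and the bound $\gamma_{\mathrm{M}}$ on the rate drops out of the spectral radius of $\T$ on $\mathcal{W}_{<1}$.
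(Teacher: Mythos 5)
Your proposal is correct in its essentials and rests on the same three pillars as the paper's proof: the perturbed affine representation \eqref{eq:affine-perturbed-z}, the spectral facts of Lemma~\ref{lem:T_properties} (in particular $\ker(\I-\T)\subset\ker(\A^\top)$), and the contraction bound \eqref{eq:primal-error-bound} obtained from the proximal form \eqref{eq:primal-update}. The execution differs, though, in a way worth noting. You split $\R^{nM}$ into the eigenvalue-$1$ eigenspace and its complementary $\T$-invariant subspace and run a ball-invariance induction on the components of $\z(k)-\bar{\z}$, arguing separately that the contracting component decays like $\gamma^k$ and that the neutral component is driven only by a geometrically summable forcing. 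The paper never performs this decomposition and never controls the neutral component at all: it passes immediately to the primal error, using $\norm{\A^\top\T^{k}}\le C'\gamma_{\mathrm{M}}^{k}$ (which is exactly your observation that $\A^\top$ annihilates the troublesome directions, packaged as a matrix-norm estimate), and then closes a scalar majorizing recursion $e(k+1)=(\gamma_{\mathrm{M}}+\zeta C'\delta_k)e(k)$ on $\norm{\x(k)-\x^*}$ itself. A second difference is where the smallness of the perturbation comes from: the paper invokes the already-established global convergence of Proposition~\ref{cr:convergence} (via nonexpansiveness of $\Tpr$) to get a \emph{vanishing} sequence $\delta_k\downarrow 0$, and uses the locality assumption on $\z(0)$ only to ensure $\delta_1<(1-\gamma_{\mathrm{M}})/(\zeta C')$; your argument is fully local and self-contained (it does not lean on Proposition~\ref{cr:convergence}), but with a fixed bound $\delta<(1-\gamma)/(2\kappa)$ it naturally yields the rate $\gamma_{\mathrm{M}}+O(\delta)$, i.e.\ any $\gamma>\gamma_{\mathrm{M}}$, rather than $\gamma_{\mathrm{M}}$ itself — which is in practice the same strength as the paper's conclusion, since its final limit also only certifies the rate $\gamma_{\mathrm{M}}+\xi$ for arbitrary $\xi>0$. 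In short: your route buys independence from the global convergence result and makes the role of the two spectral subspaces explicit; the paper's route buys a shorter argument that sidesteps the neutral directions entirely by measuring everything through $\A^\top$.
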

\begin{proof}
See~Appendix~\ref{app:linear_convergence}.
\end{proof}

While we refer to Appendix~\ref{app:linear_convergence} for the proof of this result, hereafter we comment some interesting details. These results will play an important role also in the analysis performed in Section~\ref{sec:robust-ADMM} in the presence of asynchronous updates and lossy communications, which is the main novelty of this paper.

\noindent The proof of Proposition \ref{pr:local-linear-convergence} relies on the following two facts:
\begin{enumerate}
	\item[\textit{(i)}] the set $\fix(\Tpr)$ is an affine space such that, given any two fixed points $\bar{\z}, \bar{\z}' \in \fix(\Tpr)$, it holds $\bar{\z}' - \bar{\z} \in \ker(\A^\top)$;
	\item[\textit{(ii)}] exploiting \eqref{eq:primal-update} and the fact that $\prox_{f_i / (\rho d_i)}$, $i \in \mathcal{V}$, is contractive, it is possible to upper bound the primal error $\x(k+1) - \x^*$ with the auxiliary error $\z(k) - \bar{\z}$, for any $\bar{\z} \in \fix(\Tpr)$. Specifically, we have that
	\begin{equation}\label{eq:Bound_primal}
		\norm{\x(k+1) - \x^*} \leq \zeta \norm{\A^\top (\z(k) - \bar{\z})},
	\end{equation}
	where $\zeta \in (0,1)$ is a suitable constant depending on the curvature of the $f_i$'s and on the topology of $\mathcal{G}$; for more details see \eqref{eq:primal-error-bound} and \eqref{eq:zeta} in Appendix \ref{app:linear_convergence}.
\end{enumerate}
Since $\z(k)$ converges to a fixed point then, from \eqref{eq:Bound_primal} and fact \textit{(ii)}, we have $\x(k) \to \x^*$. As observed in Appendix~\ref{app:linear_convergence}, if all the $f_i$ were quadratic functions, then \eqref{eq:affine-perturbed-z} would reduce to the linear update $\z(k+1) = \T \z(k) + \uv$. The convergence would thus be global, linear and with rate upper bounded by $\gamma_{\mathrm{M}}$.

\noindent In the proof of Proposition \ref{pr:local-linear-convergence} we show that this linear convergence is not deteriorated by the presence of the nonlinear term  $\o'(\x(k+1) - \x^*)$, though the price to be paid is that the convergence is not guaranteed to be global but only local.

Some remarks are now in order to better cast Algorithm \ref{alg:distributed-admm} within the existing literature. In particular Remarks \ref{rem:Lagrangian-based} and \ref{rem:Node-Edge} discuss the connection with Augmented Lagrangian-based and node-/edge-based formulations, respectively. Remark \ref{rem:Conv-rate} provides a further discussion on the convergence rate.

\begin{remark}[\bf Lagrangian based R-ADMM]\label{rem:Lagrangian-based}
The R-ADMM described in section~\ref{subsec:R-ADMM}  can be interpreted in the framework of \textit{augmented Lagrangian methods}. Indeed, define the augmented Lagrangian
\begin{equation}\label{eq:augmented-lagrangian}
\begin{split}
	\mathcal{L}_\rho(\x,\y,\w) &= f(\x) + g(\y) - \langle \w, \A \x + \B \y - \c \rangle + \\ &+ \frac{\rho}{2} \norm{\A \x + \B \y - \c}^2
\end{split}
\end{equation}
where $\w$ is the Lagrange multipliers' vector.
Then, the R-ADMM in~\eqref{eq:r-admm} is equivalent to the following updates, \ifarxiv see \cite{davis_convergence_2016} and Appendix~\ref{app:equivalence}\else \cite{davis_convergence_2016,bastianello_asynchronous_2019}\fi:
\begin{subequations}\label{eq:lagrangian-r-admm}
\begin{align}
	\x(k+1) &= \argmin_\x \Big\{ \mathcal{L}_\rho(\x,\y(k),\w(k)) + \label{eq:lagrangian-r-admm-x} \\ &+ \rho (2\alpha-1) \langle \A \x(k) + \B \y(k) - \c, \A \x \rangle \Big\} \nonumber \\
	\w(k+1) &= \w(k) - \rho (2\alpha-1) (\A \x(k) + \B \y(k) - \c) + \nonumber \\ &- \rho (\A \x(k+1) + \B \y(k) - \c) \label{eq:lagrangian-r-admm-w} \\
	\y(k+1) &= \argmin_\y \mathcal{L}_\rho(\x(k+1),\y,\w(k+1)). \label{eq:lagrangian-r-admm-y}
\end{align}
\end{subequations}
In particular, given $\x(0), \y(0), \w(0)$, if $\z(0) = \w(0) - \rho(2\alpha-1) (\A \x(0) + \B \y(0) - \c) - \rho (\B \y(0) - \c)$, then the $\x$ and $\y$ trajectories generated by~\eqref{eq:r-admm} and~\eqref{eq:lagrangian-r-admm} coincide, \ifarxiv see Appendix \ref{app:equivalence}\else see \cite{bastianello_asynchronous_2019}\fi. Observe that, if $\alpha=1/2$, we recover the classical ADMM described \textit{e.g.} in \cite{boyd_distributed_2010}. The choice of analyzing the more general R-ADMM relies on the fact that, by properly tuning the parameter $\alpha$, we can achieve better performance than the classical ADMM  as observed \textit{e.g.} in \cite{ghadimi_optimal_2015}, proved in \cite{giselsson_linear_2017}, and evidenced by the numerical results in Section~\ref{sec:simulation}. Interestingly, also the augmented Lagrangian-based R-ADMM in \eqref{eq:lagrangian-r-admm} is amenable of a \textit{distributed} implementation when applied to \eqref{eq:primal-indicator-f}, which is described by the following updates
\begin{subequations}\label{eq:lagrangian-r-admm-distributed}
\begin{align}
	x_i(k+1) &= \argmin_{x_i} \bigg\{ f_i(x_i) + \frac{\rho d_i}{2} \norm{x_i}^2 + \\ &\hspace{-0.85cm} - \langle x_i, \sum_{j \in \mathcal{N}_i} [w_{ij}(k) - 2\alpha\rho y_{ij}(k) - \rho(2\alpha-1) x_i(k)] \rangle \bigg\}\nonumber \\
	w_{ij}(k+1) &= w_{ij}(k) - \rho(2\alpha-1) (x_i(k) - y_{ij}(k)) + \\ &\qquad- \rho (x_i(k+1) - y_{ij}(k)) \nonumber\\
	y_{ij}(k+1) &= \frac{1}{2\rho} \big[ x_i(k+1) + x_j(k+1) + \\ &\qquad- w_{ij}(k+1) - w_{ji}(k+1) \big].\nonumber
\end{align}
\end{subequations}
Clearly, this formulation requires each node $i \in \mathcal{V}$ to store the variables $x_i$, $\{ y_{ij} \}_{j \in \mathcal{N}_i}$ and $\{ w_{ij} \}_{j \in \mathcal{N}_i}$, and to update them exchanging information only with its neighbors. Therefore in terms of storage requirement Algorithm~\ref{alg:distributed-admm} is better than the augmented Lagrangian formulation, as evidenced by Table~\ref{tab:operator-vs-lagrangian-admm}.

\begin{table}[!ht]
\centering
\caption{Comparison of R-ADMM formulations.}
\label{tab:operator-vs-lagrangian-admm}
\begin{tabular}{ccc}
	\hline 
	& Operator R-ADMM & Lagrangian R-ADMM \\ 
	\hline 
	Memory & $d_i + 1$ & $2d_i + 1$ \\
	Transmit & $d_i$ & $2d_i + 1$ \\
	\hline
\end{tabular}
\end{table}
\end{remark}

\begin{remark}[\bf Node- and edge-based ADMM]\label{rem:Node-Edge}
In Algorithm~\ref{alg:distributed-admm} and in the corresponding Lagrangian-based formulation, the number of variables that each node stores scales with $d_i$, see Table~\ref{tab:operator-vs-lagrangian-admm}. This is due to the fact that each node stores an auxiliary variable for each of the edges it is part of -- hence the name \textit{edge-based} -- incurring in a worst case memory requirement of $O(N)$. A different formulation of the R-ADMM, called \textit{node-based}, can be given, in order to guarantee that the local storage requirement is constant, \textit{i.e.} $O(1)$. Node-based formulations of the classical ADMM are employed \textit{e.g.} in \cite{mota_dadmm_2013,shi_linear_2014,makhdoumi_convergence_2017}. Notice that Algorithm~\ref{alg:distributed-admm} can be reformulated as a node-based method if each node $i \in \mathcal{V}$ stores and updates the variables $z_i' = \sum_{j \in \mathcal{N}_i} z_{ij}$ and $z_i'' = \sum_{j \in \mathcal{N}_i} z_{ji}$ instead of the $\{ z_{ij} \}_{j \in \mathcal{N}_i}$ auxiliary variables.

However, as observed in \cite{majzoobi_analysis_2018}, in general node-based ADMM formulations \textit{are not robust to packet losses}. Indeed, the convergence of a node-based ADMM is guaranteed only if, at each iteration $k$, the graph resulting from the removal of faulty edges is still connected. Edge-based formulations are instead necessary in order to remove this (rather demanding) assumption, such as the one proposed in \cite{majzoobi_analysis_2018} to handle (uniformly distributed) packet losses, and Algorithm~\ref{alg:robust-distributed-admm} proposed in this paper. Intuitively, the use of bridge variables is necessary in order to keep track of the packets received at any given time from each of the neighbors.
\end{remark}

\begin{remark}[\bf Further discussion on the convergence rate]\label{rem:Conv-rate}
In recent years there has been an increasing interest in characterizing the convergence rate of both centralized and distributed implementations of R-ADMM and classical ADMM. A special effort has been devoted to provide conditions under which the convergence is guaranteed to be linear. Next, it is worth summarizing some of the main results comparing them to Proposition \ref{pr:local-linear-convergence}. Interestingly, the distributed implementations of the standard ADMM introduced in  \cite{shi_linear_2014} and \cite{makhdoumi_convergence_2017} have been shown to attain global and linear convergence provided that the Lagrange multipliers satisfy some particular initializations and under the assumptions that the local costs are strongly convex and with Lipschitz continuous gradient. Though these assumptions are milder than the one made in Proposition \ref{pr:local-linear-convergence}, the results in \cite{shi_linear_2014} and \cite{makhdoumi_convergence_2017},  when interpreted in the context of the more general R-ADMM, are valid only for the case $\alpha = 1/2$, while the result in section~\ref{subsec:linear-convergence-synchronous} holds true for any $\alpha$ within the interval $(0,1)$. Moreover, for the case $\alpha = 1/2$, the analysis performed in \cite{shi_linear_2014} could be mimicked for the distributed Lagrangian-based implementation described in \eqref{eq:lagrangian-r-admm-distributed}, thus obtaining a global linear rate when $\alpha=1/2$ also for the algorithm proposed in this paper.

Concerning the general Peaceman-Rachford splitting applied to \eqref{eq:admm-problem}, the authors of \cite{giselsson_linear_2017} have shown that the R-ADMM algorithm converges linearly to an optimal solution provided that the matrix $\A$ is full row rank, which guarantees that the Peaceman-Rachford operator is contractive. Under the same assumption, the linear convergence extends also to randomized updates \cite{combettes_stochastic_2019}. However, in the distributed optimization scenario of interest \textit{$\A$ is not full row rank}, since $M>N$ and therefore the aforementioned convergence results cannot be applied.  In particular, the loss of row rank for $\A$ implies that the dual function $d_f$ is only convex \textit{but not strongly convex}, and, hence that the Peaceman-Rachford operator is only nonexpansive.
A notable exception arises when adopting master-slave architectures, which are characterized by a master node connected to $N$ other nodes (the slaves). Indeed, in this setting only one bridge variable is introduced for any edge, and thus $\A = \I_{nN}$, which is full row rank. The implementation of the ADMM in this setup envisions the slave nodes performing local updates of the primal variables, and the master node updating the $N$ dual variables.
\end{remark}

\section{Asynchronous Distributed R-ADMM over Lossy Networks}\label{sec:robust-ADMM}
Algorithm \ref{alg:distributed-admm} works under the standing assumption that the communication channels are reliable and that the nodes update synchronously. The goal of this Section is to relax these requirements and to show how Algorithm \ref{alg:distributed-admm} can be modified to still guarantee convergence, under probabilistic assumptions on communication failures and asynchronous updates, and to characterize its linear convergence in mean.

\subsection{Robust and Asynchronous R-ADMM}
Consider Algorithm~\ref{alg:distributed-admm}, and notice that node $i$ at iteration $k$ receives the packet $q_{j \to i}$ from $j \in \mathcal{N}_i$ only if the two following conditions are satisfied: \textit{(i)} node $j$ performs an update of $x_j$ at iteration $k$; and \textit{(ii)} the packet $q_{j \to i}$ is not lost.

\noindent Now, for any $k=0, 1, \ldots$, let us define the set of random variables $\{ \mu_i (k)\}_{i \in \mathcal{V}}$, such that the realization of $\mu_i(k)$ is $1$ if node $i$ performs an update during the $k$-th iteration, $0$ otherwise. Similarly, provided that $\mu_i(k)=1$, we define the set of variables $\{ \lambda_{i \to j}(k) \}_{i \in \mathcal{V}, j \in \mathcal{N}_i}$ such that the realization of $\lambda_{i \to j}(k)$ is $0$ if $q_{i \to j}$ is delivered to $j$, $1$ otherwise. Within this formalism, we see that node $i$ can carry out an update of $z_{ij}$ at iteration $k$ provided that $\mu_j(k) = 1$ and $\lambda_{j \to i}(k) = 0$. To simplify the theoretical analysis, we define the set of random variables $\{ \beta_{ij} \}_{i \in \mathcal{V}, j \in \mathcal{N}_i}$ such that
$$
	\beta_{ij}(k) = \begin{cases}
		1 & \text{if} \ \mu_j(k) = 1 \ \text{and} \ \lambda_{j \to i}(k) = 0, \\
		0 & \text{otherwise}.
	\end{cases}
$$
We make the following probabilistic Assumption on the variables $\beta_{ij}$.

\begin{assumption}\label{as:asynchronous-lossy-scenario}
The random variables $\{ \beta_{ij}(k)\,: \,\, i \in \mathcal{V}, \,j \in \mathcal{N}_i, \,k \in \N\}$ are mutually independent over $k$, namely, given $\beta_{ij}(k)$ and $\beta_{hl}(\ell)$ for any $(i,j), (h,l) \in \mathcal{E}$, they are independent if $k \neq \ell$. Moreover, there exists a $M$-uple $\{p_{ij}\,:\, i \in \mathcal{V}, \,j \in \mathcal{N}_i, \,0< p_{ij}<1 \}$ such that
\begin{equation}\label{eq:probabilityBeta}
	\mathbb{P}[\beta_{ij}(k) = 1] = p_{ij},
\end{equation}
for all $k \in \N$.
\end{assumption}

Observe that Assumption~\ref{as:asynchronous-lossy-scenario} requires only independence over time, but not among the random variables at the same iteration $k$. Moreover, as consequence of \eqref{eq:probabilityBeta}, each variable $z_{ij}$ has a nonzero probability of being updated at each iteration $k$.  Assumption~\ref{as:asynchronous-lossy-scenario} could have been stated equivalently in terms of $\mu_i$ and $\lambda_{i \to j}$, assuming nonzero probabilities for the occurrence of update and packet delivery events, and mutual independence over time.  

\begin{remark}[\bf Uniform probabilities]\label{rem:uniform-rv}
Assume the random variables $\{ \mu_i(k) \}_{i \in \mathcal{V}}$ and $\{ \lambda_{i \to j}(k) \}_{i \in \mathcal{V}, j \in \mathcal{N}_i}$ are i.i.d., such that $\expval{\mu_i(k)} = p_\mu$ and $\expval{\lambda_{i \to j}(k)} = p_\lambda$, for all $i \in \mathcal{V}$, $j \in \mathcal{N}_i$, and $k \in \N$. Then $\{ \beta_{ij}(k) \}_{i \in \mathcal{V}, j \in \mathcal{N}_i}$ are uniformly distributed with probability $p_\beta = p_\mu (1-p_\lambda)$, but in general are not independent, since $\{ \beta_{ij}(k) \}_{j \in \mathcal{N}_i}$ all depend on $\mu_i(k)$.
\end{remark}

In Algorithm~\ref{alg:robust-distributed-admm} we describe the modified version of Algorithm~\ref{alg:distributed-admm} that can handle asynchronous updates and packet losses. If node $i$ at iteration $k$ is selected, then it updates $x_i$ and computes the variables $q_{i \to j}$, $j \in \mathcal{N}_i$, transmitting them to its neighbors. If node $j$ receives $q_{i \to j}$ then it updates the variable $z_{ji}$, otherwise it leaves it unchanged.

\begin{algorithm}[!ht]
\caption{Robust and asynchronous distributed R-ADMM.}
\label{alg:robust-distributed-admm}
\begin{algorithmic}[1]
	\Require For each node $i$, initialize $x_i(0)$ and $z_{ij}(0)$, $j \in \mathcal{N}_i$.
	\For{$k = 0,1,\ldots$ each agent $i$}
	\CommentState{local update and transmission}
	\If{scheduled to update}
	\State compute $x_i(k+1)$ according to \eqref{eq:distributed-admm-x}
	\For{each neighbor $j \in \mathcal{N}_i$}
	\State compute and transmit $q_{i \to j}$
	\EndFor
	\EndIf
	\CommentState{auxiliary update}
	\For{each $j \in \mathcal{N}_i$}
	\State if $q_{j \to i}$ was received, compute $z_{ij}(k+1)$ according\\\hspace{2.9em}to~\eqref{eq:distributed-admm-z-with-q}
	\EndFor
	\EndFor
\end{algorithmic}
\end{algorithm}

Notice that node $i$ updates the variable $z_{ij}$ only if it receives the packet $q_{j \to i}$.
Making use of the random variables $\beta_{ij}$, we can thus describe the update step for the auxiliary variables in the following compact form
\begin{equation}\label{eq:random-z_ij}
\begin{split}
	z_{ij}(k+1) &= (1-\beta_{ij}(k)) z_{ij}(k) + \\ &\qquad + \beta_{ij}(k) \left( (1-\alpha) z_{ij}(k) + \alpha q_{j \to i} \right).
\end{split}
\end{equation}

The following convergence result holds as a consequence of the convergence of the Peaceman-Rachford splitting with random coordinate updates, see \cite{combettes_stochastic_2015,bianchi_coordinate_2016}.

\begin{proposition}\label{cr:convergence_lossy}
Consider problem~\eqref{eq:distributed-problem} with $f_i \in \Gamma_0(\R^n)$. Assume Assumption~\ref{as:asynchronous-lossy-scenario} holds, and let $0 < \alpha < 1$ and $\rho > 0$. Then for any initial condition $\z(0) \in \R^{nM}$, the trajectories $k \mapsto x_i(k)$, $i \in \mathcal{V}$, generated by Algorithm~\ref{alg:robust-distributed-admm} converge almost surely to an optimal solution $\bar{x}$ of~\eqref{eq:distributed-problem}, that is,
$$
	\mathbb{P}\left[ \lim_{k \to \infty} x_i(k) = \bar{x} \right] = 1, \qquad \forall i \in \mathcal{V}.
$$
\end{proposition}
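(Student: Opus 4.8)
The plan is to recognize Algorithm~\ref{alg:robust-distributed-admm} as a \km iteration with random block activation for a nonexpansive operator, and then invoke the known almost sure convergence of such iterations. First I would rewrite the synchronous updates \eqref{eq:distributed-admm} in compact form: setting $[\Tpr \z]_{ij} := -z_{ji} + 2\rho x_j$ with $x_j = \prox_{f_j/(\rho d_j)}([\A^\top \z]_j/(\rho d_j))$, the update \eqref{eq:distributed-admm-z} becomes $\z(k+1) = (1-\alpha)\z(k) + \alpha \Tpr \z(k)$, that is, the \km iteration \eqref{eq:km-iteration} with relaxation $\alpha\in(0,1)$ for the Peaceman--Rachford operator attached to the reformulation \eqref{eq:primal-indicator-f}. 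This operator is nonexpansive (a composition of reflective operators), $\fix(\Tpr)\neq\emptyset$ because \eqref{eq:primal-indicator-f} is feasible, convex and affinely constrained so that strong duality and dual solvability hold (cf.\ Proposition~\ref{cr:convergence} and \cite[Th.~26.11]{bauschke_convex_2017}), and from any $\bar\z\in\fix(\Tpr)$ every node recovers the \emph{same} optimizer $\bar x$ of \eqref{eq:distributed-problem} through \eqref{eq:primal-update}.

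Next I would cast the asynchronous algorithm as a random block-coordinate version of this iteration. Partitioning $\R^{nM}$ into the blocks indexed by the ordered pairs $(i,j)$, $j\in\mathcal{N}_i$, the key point is that, by construction of Algorithm~\ref{alg:robust-distributed-admm}, the packet $q_{j\to i}$ and the overwriting of $z_{ij}$ at iteration $k$ are expressed entirely through the current state $\z(k)$ --- indeed $q_{j\to i}=[\Tpr\z(k)]_{ij}$, and $z_{ij}$ is overwritten exactly when $\beta_{ij}(k)=1$ --- so that \eqref{eq:random-z_ij} reads
$$
	\z(k+1) = \z(k) + \alpha\,\bm{\Lambda}(k)\big(\Tpr \z(k) - \z(k)\big), \qquad \bm{\Lambda}(k) = \diag\big(\{\beta_{ij}(k) I_n\}_{(i,j)}\big),
$$
a \km iteration with random block activation $\bm{\Lambda}(k)$, in which $\Tpr\z(k)$ is a single-valued deterministic function of $\z(k)$ (the proxes being single-valued), with no delayed or stale information, and $\bm{\Lambda}(k)$ exogenous.

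I would then verify that Assumption~\ref{as:asynchronous-lossy-scenario} supplies exactly the hypotheses of the almost-sure convergence theorems for stochastic (quasi-Fej\'er) block-coordinate fixed-point / randomized \km iterations \cite{combettes_stochastic_2015,bianchi_coordinate_2016}: the $\{\beta_{ij}(k)\}$ are independent across $k$, so $\bm{\Lambda}(k)$ is independent of $\z(k)$ given the past, and each block is activated at every iteration with probability $p_{ij}>0$; crucially, no independence \emph{within} a single iteration is needed (cf.\ Remark~\ref{rem:uniform-rv}). Together with $\alpha\in(0,1)$ and $\fix(\Tpr)\neq\emptyset$, these yield $\z(k)\to\bar\z$ almost surely for some random $\bar\z\in\fix(\Tpr)$. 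To pass to the primal iterates, note that $\mathbb{P}[\mu_i(k)=1]\geq p_{ij}>0$ for any $j\in\mathcal{N}_i$ and the $\beta$'s are independent over $k$, so by Borel--Cantelli each node is a.s.\ activated infinitely often, whence $x_i$ is recomputed infinitely often as $\prox_{f_i/(\rho d_i)}([\A^\top\z(k)]_i/(\rho d_i))$ and is constant in between; continuity of the proximal operator then gives $x_i(k)\to\prox_{f_i/(\rho d_i)}([\A^\top\bar\z]_i/(\rho d_i))=\bar x$ a.s.\ for all $i\in\mathcal{V}$.

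The step I expect to be the main obstacle is this reduction: one must argue carefully that Algorithm~\ref{alg:robust-distributed-admm} genuinely fits the abstract template --- that every transmitted packet depends only on the current global iterate, so that (unlike the ARock framework recalled in Remark~\ref{rem:ARock}) no delayed information enters, and that the random activation is conditionally independent of that iterate --- so that the general results of \cite{combettes_stochastic_2015,bianchi_coordinate_2016} apply verbatim. Once this identification is in place, the remaining steps are essentially bookkeeping.
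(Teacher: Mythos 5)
Your proposal is correct and takes essentially the same route as the paper's own proof: Algorithm~\ref{alg:robust-distributed-admm} is interpreted as a randomized coordinate-update \km/Peaceman--Rachford iteration on $\z$, Assumption~\ref{as:asynchronous-lossy-scenario} is matched to the hypotheses of the results in \cite{combettes_stochastic_2015,bianchi_coordinate_2016}, and convergence of the primal iterates is then deduced. Your explicit Borel--Cantelli and prox-continuity argument for passing from $\z(k)\to\bar{\z}$ to $x_i(k)\to\bar{x}$ usefully spells out what the paper compresses into ``by strong duality'' (note only the harmless index slip: node $i$'s activation probability is bounded below via $\beta_{ji}$ by $p_{ji}$, not $p_{ij}$).
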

\begin{proof}
See Appendix~\ref{app:convergence-lossy}.
\end{proof}

\subsection{Mean linear convergence for strongly convex costs}

\begin{table*}[!ht]
\centering
\caption{Comparison of convergence results for different distributed ADMM formulations.}
\label{tab:convergence-results-comparison}
\begin{tabular}{ccccccc}
\hline
 	& \small Reference & \small Formulation & \small $\alpha$ & \small Linear convergence & \small Asynchronous updates & \small Packet loss \\ 
\hline 
\multirow{5}{*}{\small \shortstack{Augmented\\Lagrangian\\ADMM}}
	& Shi \textit{et al.} \cite{shi_linear_2014} & node-based & $1/2$ & global & \xmark & \xmark \\
	& Makhdoumi \& Ozdaglar \cite{makhdoumi_convergence_2017} & node-based & $1/2$ & global & \xmark & \xmark \\ 
	& Majzoobi \textit{et al.} \cite{majzoobi_analysis_2018} & node-, edge-based & $1/2$ & \xmark & \xmark & \cmark (uniform distr.) \\
	& Chang \textit{et al.} \cite{chang_asynchronous_2016_2} & master-slave$^\dagger$ & $1/2$ & global & \cmark (for master) & \xmark \\
	& Iutzeler \textit{et al.} \cite{iutzeler_explicit_2016} & clustered & $1/2$ & local & \cmark (for clusters) & \xmark \\
\hline 
\multirow{5}{*}{\small \shortstack{Splitting\\ADMM}}
	& Bianchi \textit{et al.} \cite{bianchi_coordinate_2016} & edge-based & $1/2$ & \xmark & \cmark & \xmark \\
	& Giselsson \& Boyd \cite{giselsson_linear_2017} & master-slave$^\dagger$ & $(0,1)$ & global & \xmark & \xmark \\
	& Combettes \& Pesquet \cite{combettes_stochastic_2019} & master-slave$^\dagger$ & $(0,1)$ & global & \cmark & \cmark \\
	& Peng \textit{et al.} \cite{peng_arock_2016} & edge-based & $(0,1)$ & \xmark & \cmark (one node at a time) & \xmark \\
	& \bf This paper & \bf edge-based & $\pmb{(0,1)}$ & \textbf{local} & \pmb{\cmark} & \pmb{\cmark} \\
\hline 
\multicolumn{7}{l}{$\dagger$ The results presented in these works do not explicitly address master-slave architectures, see Remark~\ref{rem:Conv-rate}.}
\end{tabular}
\end{table*}

In this section, under the assumption that the local functions $f_i$ are strongly convex, we prove that the mean convergence of Algorithm~\ref{alg:robust-distributed-admm} is locally linear. Moreover we provide an upper bound to the convergence rate.

In the scenario of Assumption~\ref{as:asynchronous-lossy-scenario}, it is not guaranteed that the auxiliary variables $z_{ij}$ are updated at each iteration $k$. Indeed, by introducing the random diagonal matrix $\B(k)\,\in \, \R^{nM \times nM}$ such that
$$
	\B(k) = \diag\{ \beta_{ij}(k) I_n \ \forall i \in \mathcal{V}, \ \forall j \in \mathcal{N}_i \},
$$
we can rewrite~\eqref{eq:affine-perturbed-z} as
\begin{equation}\label{eq:randomized-perturbed-update}
\begin{split}
	&\z(k+1) = \hat{\T}(k) \z(k) + \\
	&\qquad\qquad\,\,\, + \B(k) \left[ \uv + \o'(\x(k+1) - \x^*) \right]
\end{split}
\end{equation}
where  $\hat{\T}(k) := \I - \B(k) (\I - \T)$, with $\T$ defined in \eqref{eq:T}. This allows us to interpret Algorithm~\ref{alg:robust-distributed-admm} as the application of a randomized and perturbed affine operator.
 
The goal of this section is to evaluate the behavior of the mean error $\mathbb{E} [\norm{\x(k) - \x^*}]$ as $k \to \infty$ and, in particular, showing that it converges to zero linearly.
The following inequality holds, see Appendix~\ref{app:main-inequality} for the proof:
\begin{align}\label{eq:MainInequality}
	&\expval{\norm{\x(k+1) - \x^*}} \leq \nonumber\\
	&\,\, \zeta\, \sqrt{\expval{\norm{\A^\top  \, \prod_{\ell = 0}^{k-1} \hat{\T}(\ell)}^2}} \norm{\z(0) - \bar{\z}} +  \nonumber \\ 
	&\,\, + \zeta\,\sum_{h = 0}^{k-1}\,  \sqrt{\expval{\norm{ \A^\top  \prod_{\ell = h+1}^{k-1} \hat{\T}(\ell)}^2}} \expval{\norm{\B(h)}}\, \times \\ &\,\,\, \times \expval{\norm{\o'(\x(h+1) - \x^*)}}. \nonumber
\end{align}

Next we show that the square of the first term on the right-hand side of~\eqref{eq:MainInequality} converges to zero linearly as $k \to \infty$.
Notice that this term can be rewritten as
$$
	\Dm(k) = \mathbb{E}\left[  \left(\prod_{\ell = 0}^{k-1} \hat{\T}(\ell)\right)^\top \A \A^\top \left(\prod_{\ell = 0}^{k-1} \hat{\T}(\ell)\right)\right],
$$
if $k\geq 1$, otherwise $\Dm(0) =  \A \A^\top $. A simple recursive argument shows that
$$
	\Dm(k+1) = \mathbb{E}\left[\hat{\T}^\top(0) \Dm(k) \hat{\T}(0) \right]
$$
that is, $\Dm(k)$ is the evolution of a linear dynamical system which can be written in the form
\begin{equation}\label{eq:IterationDelta}
\Dm(k+1)=\mathcal{L}(\Dm(k))
\end{equation}
where $\mathcal{L}\,:\,\R^{nM \times nM} \to \R^{nM\times nM}$ is defined by
$$
 \mathcal{L}(\mathbf{M})= \mathbb{E}\left[\hat{\T}^\top(0) \,\mathbf{M}\, \hat{\T}^\top(0) \right].
$$
The spectral properties of $\mathcal{L}$ have been characterized in Lemma~\ref{lem:T-randomized-properties} in Appendix \ref{app:randomized-linear_convergence}. In particular the following two facts have been established. First, if $\T$ has $H$ semi-simple eigenvalues in $1$ then $ \mathcal{L}$ has $H^2$ semi-simple  eigenvalues in $1$, while all the other eigenvalues are strictly inside the unitary circle. Second, the matrix $\A \A^\top$ belongs to the eigenspace generated be the eigenvectors corresponding to the eigenvalues strictly smaller than $1$. These two facts directly imply the following result.
\begin{proposition}\label{prop:L-to-0}
Consider \eqref{eq:IterationDelta} with $\Dm(0)=\A \A^\top$. Then, there exists $C' > 0$ such that
$$
	\norm{\Dm(k)} \leq C' \bar{\gamma}_{\mathrm{M}}^k,
$$
where
\begin{equation}\label{eq:bar-gamma}
	\bar{\gamma}_{\mathrm{M}} = \max \left\{|\gamma| \,: \, \gamma \in \Lambda(\mathcal{L}),\, \gamma \neq 1\right\}.
\end{equation}
\end{proposition}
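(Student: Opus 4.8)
The plan is to read the result off the spectral description of the linear operator $\mathcal{L}$ provided by Lemma~\ref{lem:T-randomized-properties}, together with the location of the initial matrix $\A\A^\top$ relative to the eigenspaces of $\mathcal{L}$. I regard $\mathcal{L}$ as a linear endomorphism of the finite-dimensional real vector space $\mathcal{M} := \R^{nM \times nM}$, so that $\Dm(k) = \mathcal{L}^{k}(\A\A^\top)$ for all $k \in \N$. By Lemma~\ref{lem:T-randomized-properties}, the eigenvalue $1$ of $\mathcal{L}$ is semi-simple (with algebraic multiplicity $H^{2}$, where $H$ is the number of semi-simple unit eigenvalues of $\T$), every other eigenvalue of $\mathcal{L}$ lies strictly inside the unit circle, and the matrix $\A\A^\top$ belongs to the direct sum of the generalized eigenspaces of $\mathcal{L}$ associated with the eigenvalues of modulus strictly less than $1$.

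First I would decompose $\mathcal{M} = \mathcal{M}_{1} \oplus \mathcal{M}_{<1}$, where $\mathcal{M}_{1} = \ker(\mathcal{L} - \mathcal{I})$ is the eigenspace of the unit eigenvalue — which coincides with its generalized eigenspace precisely because that eigenvalue is semi-simple — and $\mathcal{M}_{<1}$ is the span of the generalized eigenvectors attached to the eigenvalues $\gamma \in \Lambda(\mathcal{L})$ with $|\gamma| < 1$. Both summands are $\mathcal{L}$-invariant, and the spectrum of the restriction $\mathcal{L}|_{\mathcal{M}_{<1}}$ is exactly $\{\gamma \in \Lambda(\mathcal{L}) : |\gamma| < 1\}$, whence its spectral radius equals $\bar{\gamma}_{\mathrm{M}}$ as defined in~\eqref{eq:bar-gamma}. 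Since $\Dm(0) = \A\A^\top \in \mathcal{M}_{<1}$ by Lemma~\ref{lem:T-randomized-properties} and $\mathcal{M}_{<1}$ is invariant, the whole trajectory stays in this ``fast'' subspace: $\Dm(k) = \mathcal{L}^{k}|_{\mathcal{M}_{<1}}(\A\A^\top) \in \mathcal{M}_{<1}$ for every $k$.

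It then remains to quantify the decay of $\mathcal{L}^{k}|_{\mathcal{M}_{<1}}$. Passing to the Jordan normal form of $\mathcal{L}|_{\mathcal{M}_{<1}}$ (equivalently, invoking Gelfand's formula for the spectral radius) gives $\norm{\mathcal{L}^{k}|_{\mathcal{M}_{<1}}} \le c\,(k+1)^{p-1}\,\bar{\gamma}_{\mathrm{M}}^{\,k}$ for a suitable $c > 0$, where $p$ is the size of the largest Jordan block of $\mathcal{L}|_{\mathcal{M}_{<1}}$; the polynomial prefactor is subexponential and can be absorbed into a larger constant $C'$ (or, being pedantic, traded for an arbitrarily small increase of the rate — and it disappears altogether if those eigenvalues happen to be semi-simple). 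Combining with the previous paragraph, $\norm{\Dm(k)} = \norm{\mathcal{L}^{k}|_{\mathcal{M}_{<1}}(\A\A^\top)} \le \norm{\mathcal{L}^{k}|_{\mathcal{M}_{<1}}}\,\norm{\A\A^\top} \le C'\,\bar{\gamma}_{\mathrm{M}}^{\,k}$, which is the claim.

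The genuinely substantive work is entirely contained in Lemma~\ref{lem:T-randomized-properties}: identifying the unit eigenvalue of $\mathcal{L}$ as semi-simple of multiplicity $H^{2}$, confining all the remaining eigenvalues to the open unit disk, and — crucially — showing that $\A\A^\top$ has no component along the unit-eigenvalue eigenspace. Granting those facts, the present proposition is the elementary observation that a linear recursion initialized inside the contracting invariant subspace converges geometrically at the spectral rate of the restricted operator; so I expect no real obstacle beyond bookkeeping, the only mild subtlety being the harmless polynomial factor discussed above.
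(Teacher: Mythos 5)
Your proof takes essentially the same route as the paper, which offers no separate argument for this proposition but derives it ``directly'' from the two facts of Lemma~\ref{lem:T-randomized-properties} --- the eigenvalue $1$ of $\mathcal{L}$ is semi-simple, every other eigenvalue lies strictly inside the unit circle, and $\A\A^\top$ lies in the eigenspace associated with the latter --- which is precisely the invariant-subspace decomposition and geometric-decay estimate you spell out. The one imprecision is your claim that a Jordan factor $(k+1)^{p-1}$ ``can be absorbed into a larger constant $C'$'': that fails if a nontrivial block sits at modulus $\bar{\gamma}_{\mathrm{M}}$, and the correct fixes are exactly your parenthetical ones (an arbitrarily small inflation of the rate, or semi-simplicity of those eigenvalues, which the paper implicitly relies on via the symmetry of $\T$, and hence of $\mathcal{L}$, invoked in Remark~\ref{rem:symmetry-T} and Appendix~\ref{app:bound-convergence-rate}).
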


The previous Proposition states that the convergence rate to zero of $\pmb{\Delta}(k)$ is upper bounded by the largest eigenvalue in absolute value of $\mathcal{L}$ different from $1$. One can show that $\bar{\gamma}_{\mathrm{M}}$ is a suitable upper-bound also for the convergence rate to zero of the second term in the right-hand side of \eqref{eq:MainInequality}.
Indeed, the following Proposition holds true in a neighborhood of the optimal solution.

\begin{proposition}\label{pr:randomized-linear-convergence}
Assume that the local costs $f_i$ are strongly convex and twice continuously differentiable, and that Assumption~\ref{as:asynchronous-lossy-scenario} holds. Then there exists $\epsilon > 0$ such that, if $\operatorname{dist}(\z(0), \fix(\Tpr)) \leq \epsilon$, then Algorithm \ref{alg:robust-distributed-admm} converges linearly -- in mean -- to the optimal solution, \textit{i.e.}, 
$$
	\mathbb{E} \left[ \|x_i(k)-x^*\| \right] \leq C \gamma^k \qquad i \in \mathcal{V}, 
$$
with $C > 0$, and $0 \leq \gamma \leq \sqrt{\bar{\gamma}_{\mathrm{M}}} < 1$.
\end{proposition}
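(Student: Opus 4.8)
The plan is to control the mean error $e(k) := \expval{\norm{\x(k) - \x^*}}$ by feeding into the master inequality~\eqref{eq:MainInequality} two geometric estimates — one for the random matrix factors, one for the vanishing perturbation — thereby reducing everything to a scalar convolution recursion whose rate is dictated by $\sqrt{\bar{\gamma}_{\mathrm{M}}}$. The first estimate is the most direct: since under Assumption~\ref{as:asynchronous-lossy-scenario} the matrices $\hat{\T}(\ell) = \I - \B(\ell)(\I - \T)$ are i.i.d.\ in $\ell$, the block product $\prod_{\ell = h+1}^{k-1} \hat{\T}(\ell)$ has the same distribution as $\prod_{\ell = 0}^{k-2-h} \hat{\T}(\ell)$, so that $\expval{\bigl(\prod_{\ell = h+1}^{k-1}\hat{\T}(\ell)\bigr)^{\!\top} \A\A^{\top}\bigl(\prod_{\ell = h+1}^{k-1}\hat{\T}(\ell)\bigr)} = \Dm(k-1-h)$ with $\Dm$ the linear recursion of~\eqref{eq:IterationDelta}. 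Bounding the spectral norm of a positive semidefinite matrix by its trace and the trace by $nM$ times the spectral norm, Proposition~\ref{prop:L-to-0} yields $\expval{\norm{\A^{\top}\prod_{\ell = h+1}^{k-1}\hat{\T}(\ell)}^{2}} \le \operatorname{tr}\Dm(k-1-h) \le nM\,C'\,\bar{\gamma}_{\mathrm{M}}^{\,k-1-h}$, hence each matrix factor in~\eqref{eq:MainInequality} is at most $\sqrt{nMC'}\,(\sqrt{\bar{\gamma}_{\mathrm{M}}})^{\,k-1-h}$ (and $(\sqrt{\bar{\gamma}_{\mathrm{M}}})^{k}$ for the first term); since $\B(h)$ is a $0/1$ diagonal matrix we also have $\expval{\norm{\B(h)}} \le 1$.

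For the perturbation term I would exploit that $\o'$ vanishes \emph{faster} than linearly: for every $\delta > 0$ there is a radius $r_\delta > 0$ with $\norm{\o'(\v)} \le \delta\norm{\v}$ whenever $\norm{\v} \le r_\delta$. To turn this into a bound on $\expval{\norm{\o'(\x(h+1) - \x^*)}}$, one must first guarantee that the primal iterate stays inside the ball $\norm{\x(h+1) - \x^*} \le r_\delta$ for all $h$, not merely eventually. This I would obtain by a bootstrap: the unperturbed randomized Peaceman-Rachford operator is stochastically Fej\'er monotone with respect to $\fix(\Tpr)$ — this is precisely the mechanism behind Proposition~\ref{cr:convergence_lossy} — so the perturbed iteration~\eqref{eq:randomized-perturbed-update} satisfies, conditionally on the past, a perturbed supermartingale inequality for $\operatorname{dist}^2(\z(k),\fix(\Tpr))$ in which the perturbation is itself reabsorbed through~\eqref{eq:Bound_primal}; choosing $\epsilon$ small then keeps $\operatorname{dist}(\z(k),\fix(\Tpr))$ within a fixed multiple of $\epsilon$ (in expectation) for all $k$, and~\eqref{eq:Bound_primal} transfers this to $\norm{\x(h+1) - \x^*} \le r_\delta$. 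Consequently $\expval{\norm{\o'(\x(h+1) - \x^*)}} \le \delta\, e(h+1)$.

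Combining the two estimates, \eqref{eq:MainInequality} collapses to the scalar recursion $e(k+1) \le B_1\,a^{k} + B_3 \sum_{j=1}^{k} a^{\,k-j}\,e(j)$, where $a := \sqrt{\bar{\gamma}_{\mathrm{M}}} \in [0,1)$, $B_1 := \zeta\sqrt{nMC'}\,\norm{\z(0)-\bar{\z}}$ and $B_3 := \zeta\sqrt{nMC'}\,\delta$. Summing with a generating function $E(\tau) = \sum_{k\ge 1} e(k)\tau^k$ and using the Cauchy-product structure gives $E(\tau)\bigl(1 - \tfrac{B_3\tau}{1-a\tau}\bigr) \le \tfrac{B_1\tau}{1-a\tau}$, i.e.\ $E(\tau) \le \tfrac{B_1\tau}{1-(a+B_3)\tau}$, whose smallest singularity is at $\tau = 1/(a+B_3)$; hence $e(k) \le C\,(a + B_3)^{k}$. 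Since $B_3$ is proportional to $\delta$ and $\delta$ can be made as small as wanted by shrinking $\epsilon$ — and, more sharply, the effective coefficient in front of $e(h+1)$ tends to $0$ as $h \to \infty$ because $\norm{\o'(\x(h+1)-\x^*)}/\norm{\x(h+1)-\x^*} \to 0$, so a discrete Gronwall estimate keeps $e(k)/a^k$ sub-exponential — the attained rate $\gamma$ can be pushed down to $\sqrt{\bar{\gamma}_{\mathrm{M}}}$, which is the claim. Finally, $x_i(k)$ is a sub-vector of $\x(k)$, so $\norm{x_i(k) - x^*} \le \norm{\x(k) - \x^*}$ and the bound on each component follows.

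I expect the bootstrap in the second paragraph to be the main obstacle: the almost-sure convergence of Proposition~\ref{cr:convergence_lossy} only guarantees that $\z(k)$ \emph{eventually} enters any neighborhood of $\fix(\Tpr)$, whereas validating the linearization~\eqref{eq:affine-perturbed-z}--\eqref{eq:randomized-perturbed-update} requires the iterate to remain there for \emph{all} $k$, uniformly enough for an in-expectation bound; this forces a quantitative stochastic-Fej\'er estimate in which the non-linear remainder is controlled by~\eqref{eq:Bound_primal}, closing the loop. A secondary technical point is squeezing the rate in the scalar recursion down to exactly $\sqrt{\bar{\gamma}_{\mathrm{M}}}$ rather than $\sqrt{\bar{\gamma}_{\mathrm{M}}} + O(\delta)$, which is where the super-linear (not merely small) vanishing of $\o'$ is genuinely used.
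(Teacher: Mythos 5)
Your overall route is the same as the paper's: feed into \eqref{eq:MainInequality} a geometric estimate for the random matrix factors obtained from the linear system for $\Dm(k)$ and Proposition~\ref{prop:L-to-0} (the paper records precisely your bound, in the form $\expval{\norm{\A^\top \prod_{\ell=0}^{k-1}\hat{\T}(\ell)}^2}\leq C'\bar{\gamma}_{\mathrm{M}}^{k}$), plus a bound $\norm{\o'(\x(h+1)-\x^*)}\leq \delta_{h+1}\norm{\x(h+1)-\x^*}$ with vanishing coefficients, and then solve the resulting scalar convolution recursion at rate $\sqrt{\bar{\gamma}_{\mathrm{M}}}$. The paper handles that last step with the auxiliary product sequence $e(k)$ of Appendix~\ref{app:linear_convergence} instead of your generating-function computation, but this difference is cosmetic; both arguments end by pushing the rate from $\sqrt{\bar{\gamma}_{\mathrm{M}}}+O(\delta)$ down to $\sqrt{\bar{\gamma}_{\mathrm{M}}}$ using $\delta_h\to 0$.

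The genuine gap is in your bootstrap paragraph. To conclude $\expval{\norm{\o'(\x(h+1)-\x^*)}}\leq \delta\,\expval{\norm{\x(h+1)-\x^*}}$ you need the \emph{pathwise} bound $\norm{\x(h+1)-\x^*}\leq r_\delta$ (at least almost surely, or on an event you then control separately), whereas your stochastic-Fej\'er estimate only delivers a bound on $\expval{\operatorname{dist}^2(\z(k),\fix(\Tpr))}$; an in-mean bound of order $\epsilon$ does not prevent individual sample paths from leaving the ball on which the linearization \eqref{eq:randomized-perturbed-update} and the pointwise estimate $\norm{\o'(\v)}\leq\delta\norm{\v}$ are valid, so the step ``within a fixed multiple of $\epsilon$ (in expectation) \ldots transfers this to $\norm{\x(h+1)-\x^*}\leq r_\delta$'' does not follow as written. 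To close it you would either need an almost-sure Fej\'er-type bound (tracking the event on which the iterates remain in the ball) or a split of the expectation over that event and its complement. The paper does not use a bootstrap at all: exactly as in the synchronous proof of Proposition~\ref{pr:local-linear-convergence}, it appeals to the almost sure convergence guaranteed by Proposition~\ref{cr:convergence_lossy} to assert the existence of a non-increasing sequence $\delta_k\to 0$ with $\norm{\o'(\x(k+1)-\x^*)}\leq\delta_{k+1}\norm{\x(k+1)-\x^*}$, and uses the hypothesis $\operatorname{dist}(\z(0),\fix(\Tpr))\leq\epsilon$ only to make $\delta_1$ small enough; admittedly the paper states this tersely (uniformity of $\delta_k$ over sample paths is not spelled out), so your instinct that this is the delicate point is correct, but your proposed fix replaces a pathwise requirement with an in-expectation one and therefore does not repair it.
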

\begin{proof}
See~Appendix~\ref{app:randomized-linear_convergence}.
\end{proof}

All the details of the derivation can be found in Appendix~\ref{app:randomized-linear_convergence}. The next Proposition provides a matricial characterization of the operator $\mathcal{L}$ that can be used to compute $\bar{\gamma}_{\mathrm{M}}$. 

\begin{proposition}\label{pr:randomized-convergence-rate}
The linear operator $\mathcal{L}$ can be equivalently described by the following matrix
$$
	\L = \expval{\hat{\T}(0) \otimes \hat{\T}(0)}
$$
which is equal to
\begin{align*}
	\L &= \I \otimes \I - \I \otimes \mathbb{E}[\B(0)] + \I \otimes \mathbb{E}[\B(0)]\T - \mathbb{E}[\B(0)] \otimes \I + \\
	&+ \mathbb{E}[\B(0)]\T \otimes \I + \mathbb{E}[\B(0) \otimes \B(0)] (\I - \T) \otimes (\I - \T).
\end{align*}
Hence 
$$
	\bar{\gamma}_{\mathrm{M}} = \max \left\{|\gamma| \,: \, \gamma \in \Lambda(\L),\, \gamma \neq 1\right\}.
$$
\end{proposition}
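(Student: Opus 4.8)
The plan is to reduce the matrix-valued linear recursion \eqref{eq:IterationDelta} to an ordinary matrix--vector one through vectorization, and then to expand the resulting matrix explicitly. First I would recall the Kronecker identity $\vect(P \mathbf{M} Q) = (Q^\top \otimes P)\vect(\mathbf{M})$, valid for any conformable matrices $P,Q$. Applying it, for a fixed realization of the variables $\beta_{ij}(0)$, to the (deterministic) linear map $\mathbf{M} \mapsto \hat{\T}^\top(0)\,\mathbf{M}\,\hat{\T}(0)$ appearing in \eqref{eq:IterationDelta} and then taking expectations, the linearity of $\vect$ and of $\mathbb{E}[\cdot]$ shows that $\mathcal{L}$ is represented, in vectorized coordinates, by the matrix $\mathbb{E}\big[\hat{\T}^\top(0)\otimes\hat{\T}^\top(0)\big] = \big(\mathbb{E}[\hat{\T}(0)\otimes\hat{\T}(0)]\big)^\top = \L^\top$. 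Since a matrix and its transpose share the same spectrum, $\Lambda(\mathcal{L}) = \Lambda(\L)$; combined with the definition \eqref{eq:bar-gamma} of $\bar{\gamma}_{\mathrm{M}}$ and with the fact, established in Lemma~\ref{lem:T-randomized-properties} (see also Proposition~\ref{prop:L-to-0}), that the eigenvalues of $\mathcal{L}$ equal to $1$ are isolated from the remaining ones, this immediately yields the last displayed identity of the statement, with the maximum attained and strictly smaller than $1$.

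It then remains to compute $\L$ in closed form. Here I would substitute $\hat{\T}(0) = \I - \B(0)(\I - \T)$ and expand $\hat{\T}(0)\otimes\hat{\T}(0)$ by bilinearity of the Kronecker product, writing $\hat{\T}(0) = \I + \v$ with $\v = -\B(0)(\I-\T)$, so that $\hat{\T}(0)\otimes\hat{\T}(0) = \I\otimes\I + \I\otimes\v + \v\otimes\I + \v\otimes\v$. The two middle terms produce the four terms linear in $\B(0)$, namely $-\I\otimes\B(0) + \I\otimes\B(0)\T - \B(0)\otimes\I + \B(0)\T\otimes\I$, while for the quadratic term the mixed--product rule $(P \otimes R)(Q \otimes S) = (PQ)\otimes(RS)$ gives $\v\otimes\v = \big(\B(0)\otimes\B(0)\big)\big((\I - \T)\otimes(\I - \T)\big)$. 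Taking expectations, and using that $\I$ and $\T$ are deterministic, produces exactly the announced expression for $\L$.

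I do not expect a genuine obstacle in this argument: it is essentially a bookkeeping computation. The two points that require a little care are (i) fixing the vectorization convention consistently, so that the representing matrix indeed coincides with $\L$ up to the harmless transpose used above, and (ii) resisting the temptation to replace $\mathbb{E}[\B(0)\otimes\B(0)]$ by $\mathbb{E}[\B(0)]\otimes\mathbb{E}[\B(0)]$, which is \emph{not} justified under Assumption~\ref{as:asynchronous-lossy-scenario}, since only independence over $k$ is assumed and, as noted in Remark~\ref{rem:uniform-rv}, the diagonal entries of $\B(0)$ may be correlated across edges at a fixed iteration.
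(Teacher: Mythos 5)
Your proposal is correct and follows essentially the same route as the paper's proof: vectorize the recursion \eqref{eq:IterationDelta} via $\vect(P\mathbf{M}Q) = (Q^\top \otimes P)\vect(\mathbf{M})$ and expand $\hat{\T}(0)\otimes\hat{\T}(0)$ with the mixed-product rule, keeping $\mathbb{E}[\B(0)\otimes\B(0)]$ unfactored. The only (minor, and arguably cleaner) difference is that you dispose of the transpose by invoking $\Lambda(\L^\top)=\Lambda(\L)$, whereas the paper drops it by appealing to the symmetry of $\T$ (Remark~\ref{rem:symmetry-T}) and hence of $\hat{\T}(0)$ and $\L$.
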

\begin{proof}
See~Appendix~\ref{app:bound-convergence-rate}.
\end{proof}

Observe that, from Assumption \ref{as:asynchronous-lossy-scenario}, it follows that $\mathbb{E}[\B(0)]$ and $\mathbb{E}[\B(0) \otimes \B(0)]$ are both diagonal matrices. In particular, when considering the uniform scenario introduced in Remark~\ref{rem:uniform-rv}, the computation of $\L$ simplifies to 
\begin{align*}
	\L &= (1 - 2p_\beta) \I \otimes \I + p_\beta \big[ \I \otimes \T + \T \otimes \I \big] + \\ & + \mathbb{E}[\B(0) \otimes \B(0)] (\I - \T) \otimes (\I - \T).
\end{align*}
since $\mathbb{E}[\B(0)] = p_\beta \I$, and where the diagonal elements of $\mathbb{E}[\B(0) \otimes \B(0)]$ are given by
$$
	\mathbb{E}[\beta_{ij}(0)\beta_{hl}(0)] = \begin{cases}
		p_\beta^2 & \quad \text{if}\ i \neq h, \\
		p_\beta^2/p_\mu & \quad \text{if}\ i=h,\ j \neq l, \\
		p_\beta & \quad \text{if}\ i = h,\ j = l. \\
	\end{cases}
$$

We conclude this Section with the following Remarks that emphasize some interesting properties of Algorithm \ref{alg:robust-distributed-admm}.

\begin{remark}[\bf Quadratic case: global linear convergence]
If the local costs are quadratic, then the linear convergence results of Proposition \ref{pr:local-linear-convergence}~and~\ref{pr:randomized-linear-convergence} hold globally. This is a consequence of the fact that the auxiliary variable update~\eqref{eq:randomized-perturbed-update} characterizing the proposed algorithm becomes a (randomized) affine update:
$$
	\z(k+1) = \hat{\T}(k) \z(k) + \B(k) \uv.
$$
\end{remark}

\begin{remark}[\bf Convergence of randomized (R-)ADMM]
Owing to the operator theoretical interpretation of the R-ADMM, its convergence in the presence of asynchronous updates and packet losses can be guaranteed almost surely for all choices of initial conditions and of the free parameters $\alpha$ and $\rho$. On the other hand, proving convergence of the augmented Lagrangian-based interpretation of R-ADMM (see Remark~\ref{rem:Lagrangian-based}) is not as straightforward. To the best of our knowledge, \cite{majzoobi_analysis_2018} is the only paper proving convergence of the standard ADMM in the presence of packet losses. Interestingly, building on the framework established by \cite{shi_linear_2014}, the authors of \cite{majzoobi_analysis_2018} have proved global convergence of the ADMM in the presence of uniformly distributed packet losses. However no linear convergence has been established and, in turn, no characterization of the rate of convergence has been provided. Moreover, asynchronous scenarios have not been analyzed. On the other hand, in \cite{chang_asynchronous_2016_2} global linear convergence of ADMM is shown in the presence of asynchronous updates, but the results hold only for master-slave architectures.

Concerning the general R-ADMM algorithm, results for lossy and asynchronous scenarios have been obtained in \cite{peng_arock_2016, hannah_unbounded_2018,combettes_stochastic_2015,bianchi_coordinate_2016,combettes_stochastic_2019}. More precisely, in \cite{peng_arock_2016, hannah_unbounded_2018}, it is proved that the ARock algorithm converges sub-linearly with asynchronous updates and (possibly unbounded) transmission delays. However convergence is guaranteed only if a single agent updates at each iteration, while Algorithm~\ref{alg:robust-distributed-admm} is fully parallel, \textit{i.e.} guarantees convergence when an arbitrary number of agents updates simultaneously. Moreover, as already stressed in Remark \ref{rem:ARock}, the presence of a common memory among the nodes makes the ARock framework not suitable to theoretically analyze the convergence properties of Algorithm \ref{alg:robust-distributed-admm}.

\noindent In \cite{bianchi_coordinate_2016,combettes_stochastic_2015} the convergence of the general R-ADMM has been shown in the presence of randomized coordinate updates. Furthermore, \cite{combettes_stochastic_2019} proves the global, linear convergence of the randomized R-ADMM under the assumption that $\A$ is full row rank. We remark that the the linear convergence result of \cite{combettes_stochastic_2019} applies to the distributed setup of interest only for master-slave topologies, for which $\A$ is full row rank [cf. Remark~\ref{rem:Conv-rate}].

The review literature provided in this Remark and in Remark \ref{rem:Conv-rate} has been conveniently summarized in Table \ref{tab:convergence-results-comparison}.
\end{remark}

\begin{remark}[\bf Stable parameters pairs]
Observe that both Proposition~\ref{cr:convergence}, for the case of reliable communications, and Proposition~\ref{cr:convergence_lossy}, for the randomized updating scenario, establish convergence provided that $0<\alpha<1$ and $\rho>0$. However, these conditions are \textit{only sufficient and not necessary} and, in particular, the convergence might hold also for values of $\alpha \geq 1$. This fact, proved in \cite{giselsson_linear_2017} under the assumption that $\A$ is full row rank, can be empirically observed in Section~\ref{sec:simulation} where, for the case of quadratic functions $f_i,\ i\in\mathcal{V}$, the region of attraction in parameter space is larger. Moreover, despite what the intuition would suggest, the larger the packet loss uniform probability $p_\lambda$, the larger the region of convergence. However, this increased region of stability is counterbalanced by a slower convergence rate of the algorithm.
\end{remark}

\section{Simulations}\label{sec:simulation}
In this Section we present numerical results that showcase the convergence properties of the proposed Algorithm~\ref{alg:robust-distributed-admm} in different scenarios.

\subsection{Error trajectories}
We consider a random geometric graph with $N = 25$ nodes, and quadratic costs $f_i(x_i) = (1/2) x_i^\top Q_i x_i - \langle r_i, x_i \rangle$, with $Q_i = Q_i^\top \succ 0$, and $n = 5$. We performed a set of Monte Carlo simulations, each $500$ iterations long and averaging over $100$ realizations of the uniformly distributed packet loss and update random variables.

Fig.~\ref{fig:varying-alpha} depicts the logarithmic error $\log \norm{\x(k) - \x^*}$ for different values of $\alpha$ when both packet losses and asynchronous updates are present. First of all we notice that, the convergence is linear. Moreover, the closer $\alpha$ is to $1$, the faster the convergence is; notice that, although Proposition~\ref{cr:convergence_lossy} does not guarantee convergence for $\alpha = 1$, this is nonetheless achieved. This result suggests that the R-ADMM is advantageous w.r.t. the standard ADMM, thus justifying its choice.
\begin{figure}[!ht]
	\centering
	\includegraphics[width=0.49\textwidth]{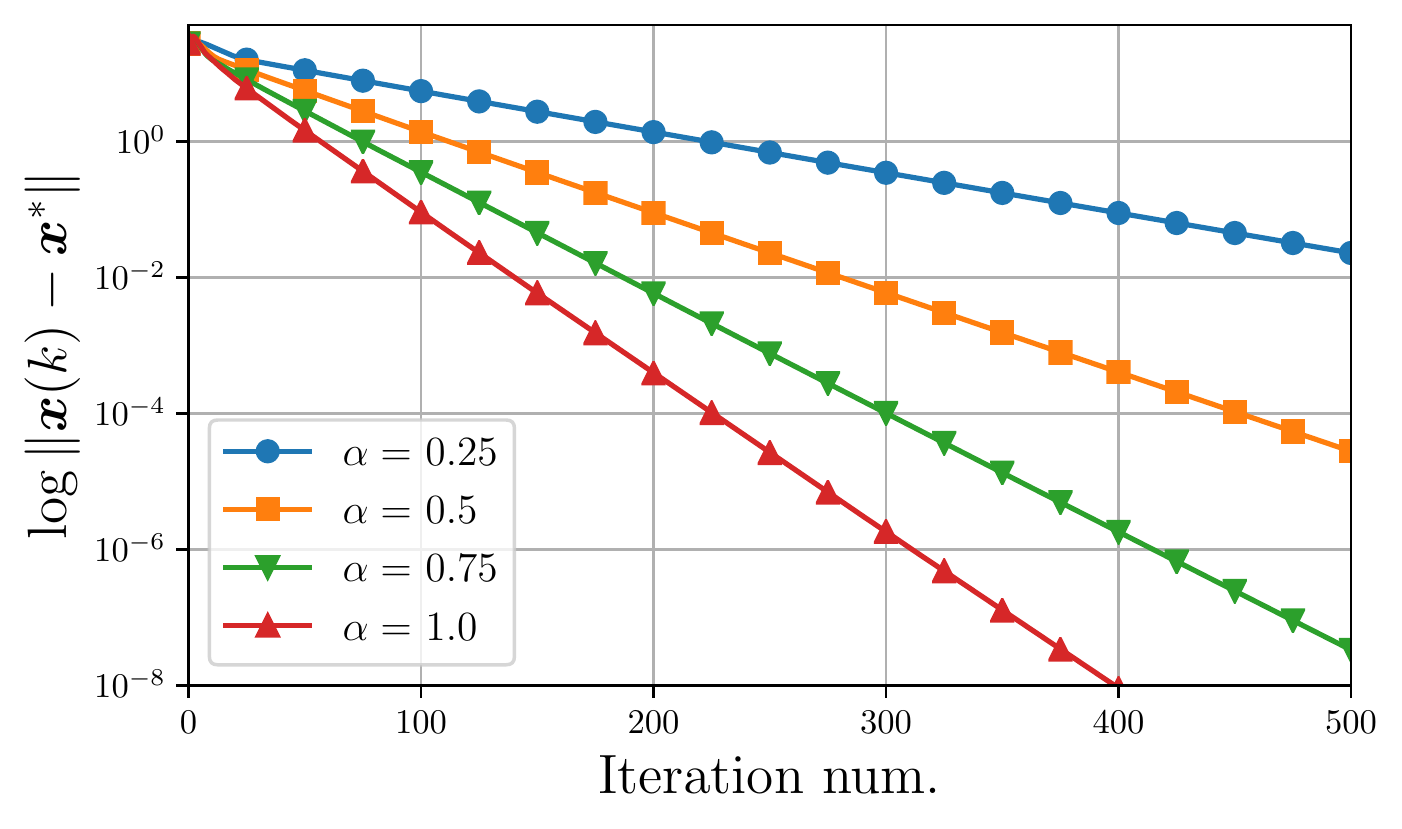}
	\caption{Logarithmic error for different values of $\alpha$; with $p_\lambda = 0.4$, $p_\mu = 0.8$, $\rho = 1$.}
	\label{fig:varying-alpha}
\end{figure}

Fig.~\ref{fig:varying-losses} depicts the logarithmic error for different values of the packet loss probability $p_\lambda$. The result is that the larger $p_\lambda$ is, the slower the convergence, since the number of updates performed at each iteration decreases.
\begin{figure}[!ht]
	\centering
	\includegraphics[width=0.49\textwidth]{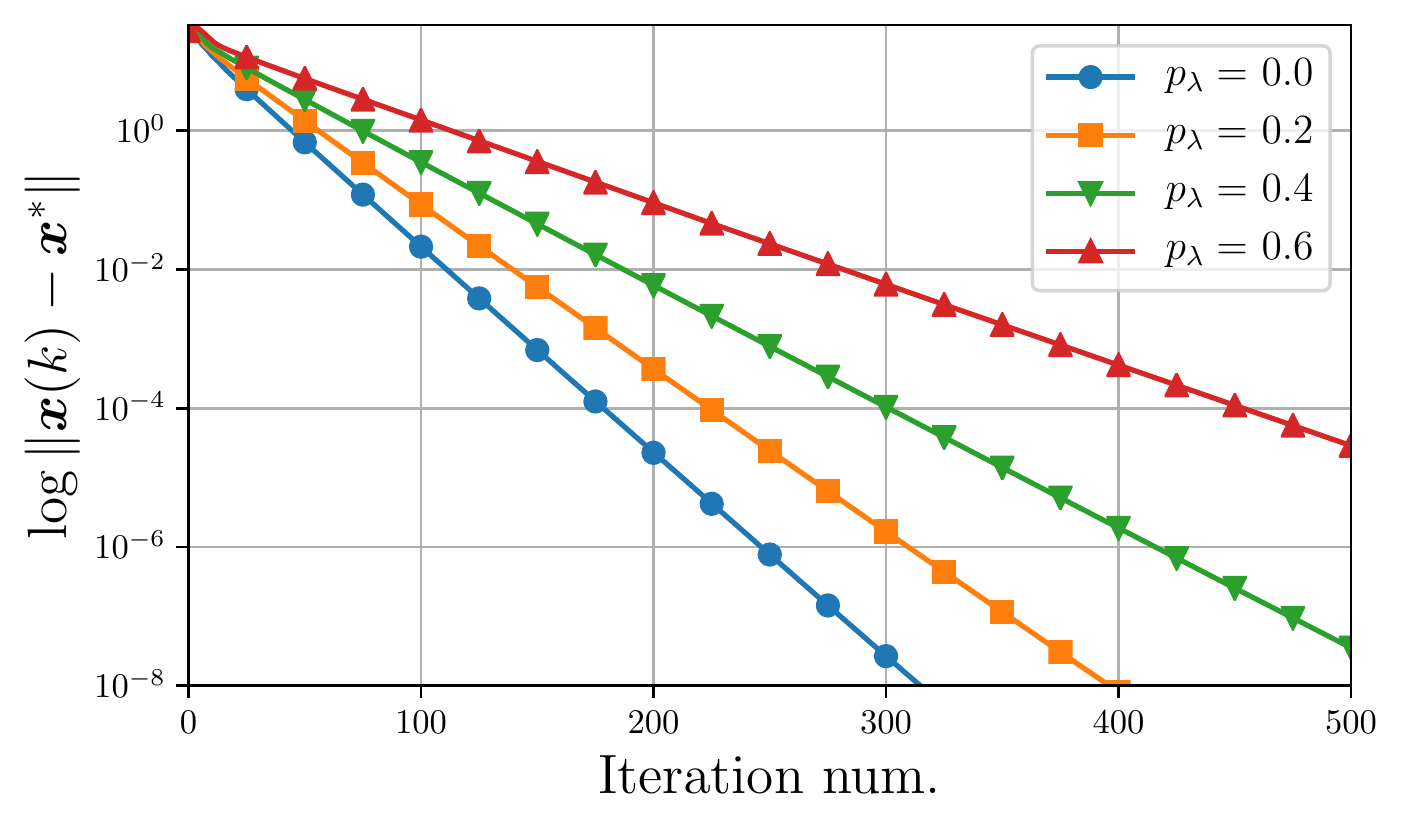}
	\caption{Logarithmic error for different values of $p_\lambda$; with $p_\mu = 0.8$, $\alpha = 0.75$, $\rho = 1$.}
	\label{fig:varying-losses}
\end{figure}

Finally, Fig.~\ref{fig:stability-boundaries} depicts the stable pairs of values $(\rho, \alpha)$ for different packet loss probabilities and $p_\mu = 1$. A pair $(\rho, \alpha)$ is considered stable if it leads to convergence of Algorithm~\ref{alg:robust-distributed-admm} over all of the Monte Carlo iterations. The curves in Fig.~\ref{fig:stability-boundaries} represent the upper bound to the value of $\alpha$ that gives stable pairs.
\begin{figure}[!ht]
	\centering
	\includegraphics[width=0.49\textwidth]{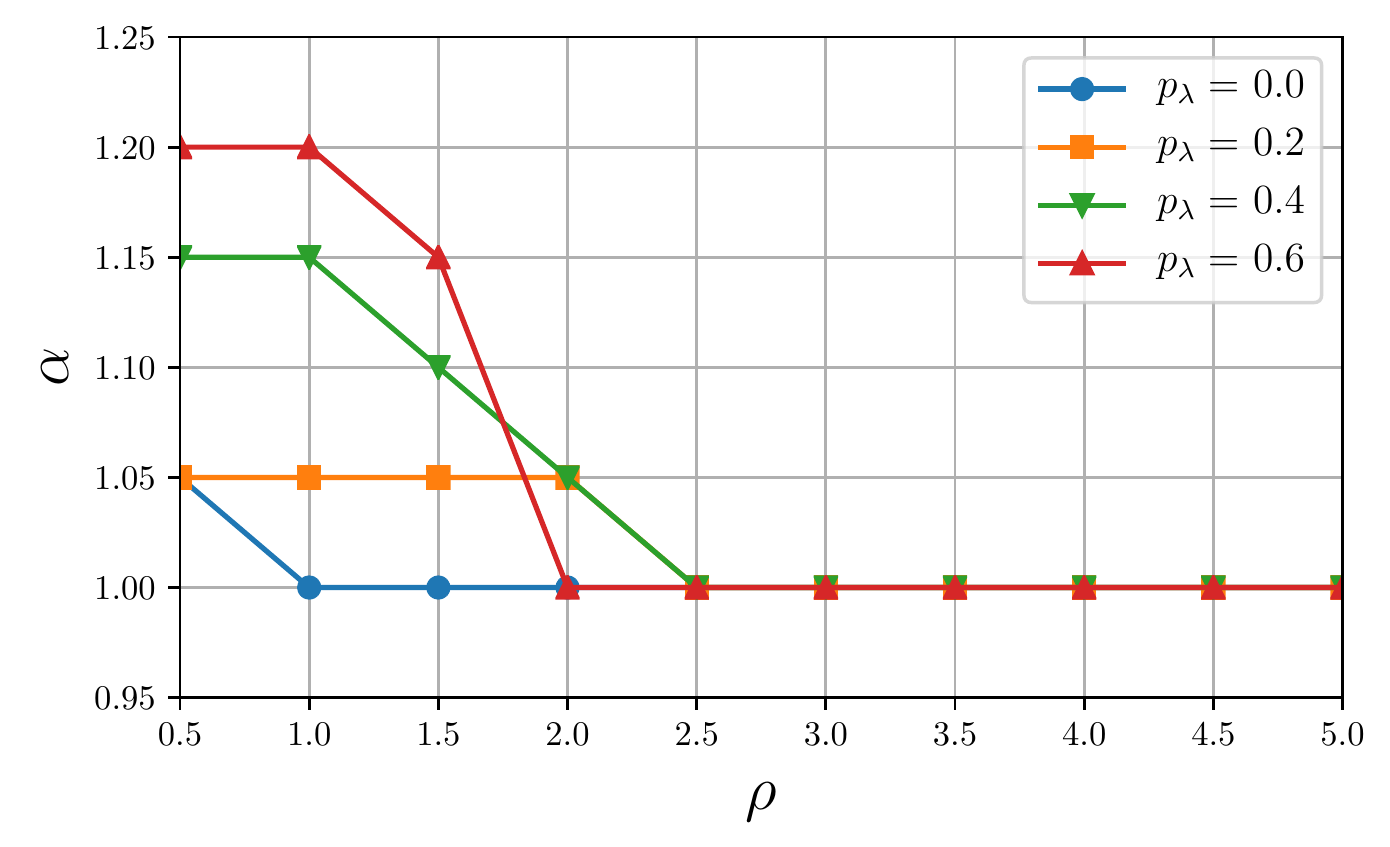}
	\caption{Stability boundaries in the $(\rho, \alpha)$ plane for different values of $p_\lambda$. Below each curve are included the stable pairs of parameters, which lead to convergence over all of the Monte Carlo simulations.}
	\label{fig:stability-boundaries}
\end{figure}
An interesting feature of the proposed algorithm is that the larger the packet loss probability is, the larger the stability region. This is however counterbalanced by the fact that the convergence rate increases as the packet loss grows larger [cf. Fig.~\ref{fig:varying-losses}].

\subsection{Convergence rate}
We consider now a random geometric graph with $N = 5$ nodes and $n = 2$, the same quadratic cost for each agent, and for simplicity $p_\mu = 1$. We evaluate the empirical convergence rate of the R-ADMM $\hat{\gamma}$, computed as the slope of the logarithmic error trajectory averaged over $100$ Monte Carlo simulations, each $1000$ iterations long. Fig.~\ref{fig:convergence-rate} depicts the results.
\begin{figure}[!ht]
	\centering
	\includegraphics[width=0.49\textwidth]{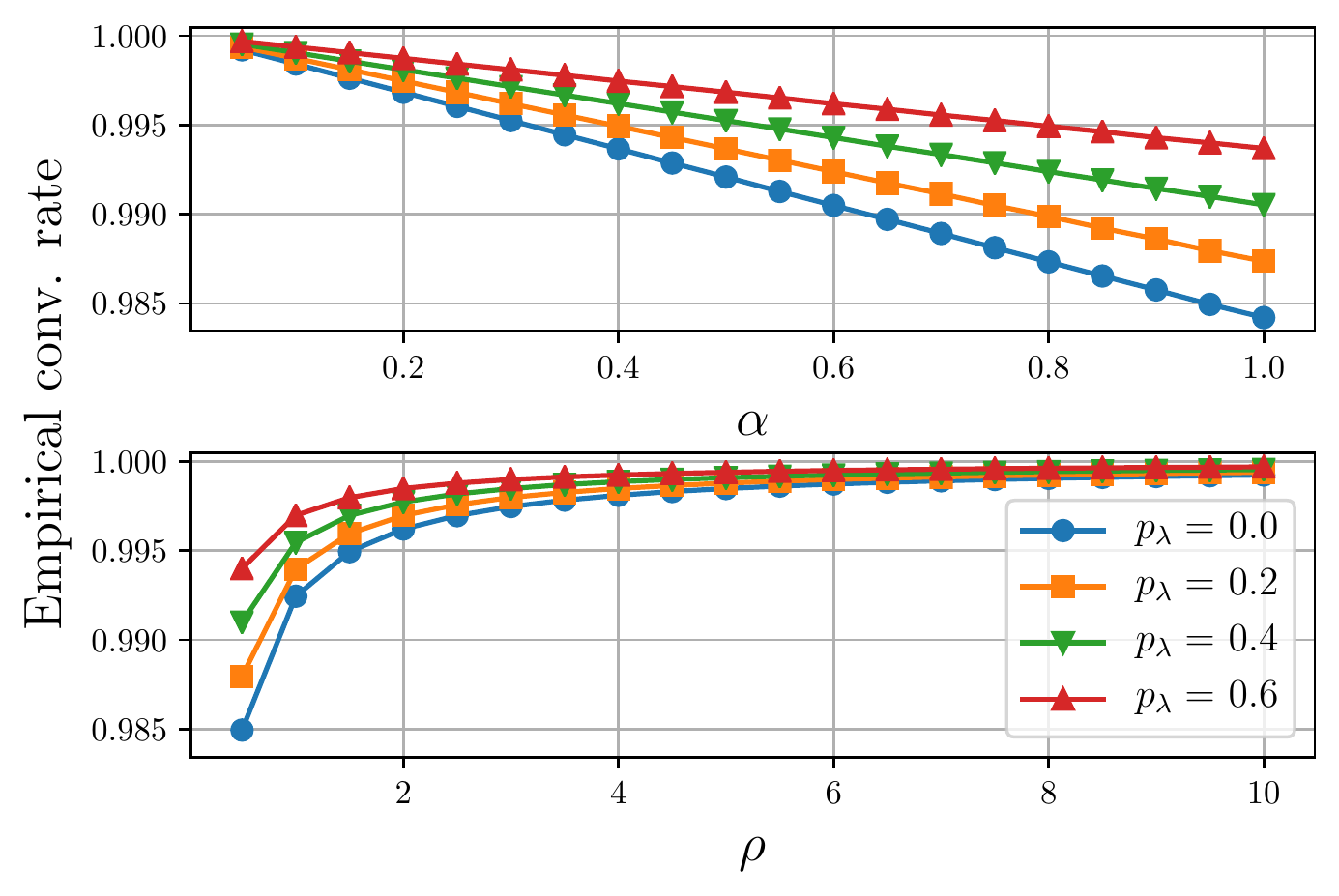}
	\caption{Empirical convergence rate for different values of the parameters $\alpha$ (top) and $\rho$ (bottom), with $N = 5$ and $n = 2$. For the top figure we choose $\rho = 0.5$, and for the bottom we choose $\alpha = 0.95$.}
	\label{fig:convergence-rate}
\end{figure}
Notice that, as evidenced also by Fig.~\ref{fig:varying-alpha} above, the larger $\alpha$ is, the lower the convergence rate. On the other hand, in this particular scenario the larger is $\rho$, the worse the convergence rate.

Moreover, for each choice of $\alpha \in (0,1)$, $\rho \in [0.5, 10]$ and $p_\lambda \in \{0, 0.2, 0.4, 0.6\}$, we computed $\bar{\gamma}_{\mathrm{M}}$, which by Proposition~\ref{pr:randomized-linear-convergence} gives a bound to the convergence rate that holds in mean. Indeed, as evidenced by Tab.~\ref{tab:convergence-rates}, the bound appears to be extremely tight, with a maximum difference that is less than $1$\textperthousand.
\begin{table}[!ht]
	\centering
	\caption{Difference between empirical and theoretical convergence rate.}
	\label{tab:convergence-rates}
	\begin{tabular}{cccc}
	\hline 
	 & Maximum & Minimum & Mean $\pm$ Std \\ 
	\hline 
	$\frac{|\hat{\gamma} - \bar{\gamma}_{\mathrm{M}}|}{\bar{\gamma}_{\mathrm{M}}}$ & $4.9 \times 10^{-5}$ & $8.28 \times 10^{-11}$ & $(1.1 \pm 2.8) \times 10^{-6}$ \\ 
	\hline 
	\end{tabular} 
\end{table}

Finally, Fig.~\ref{fig:convergence-rate-nodes} depicts the empirical rate for complete graphs with different numbers of nodes, and for different packet loss probabilities. The cost is quadratic and equal for all the agents.
\begin{figure}[!ht]
	\centering
	\includegraphics[width=0.49\textwidth]{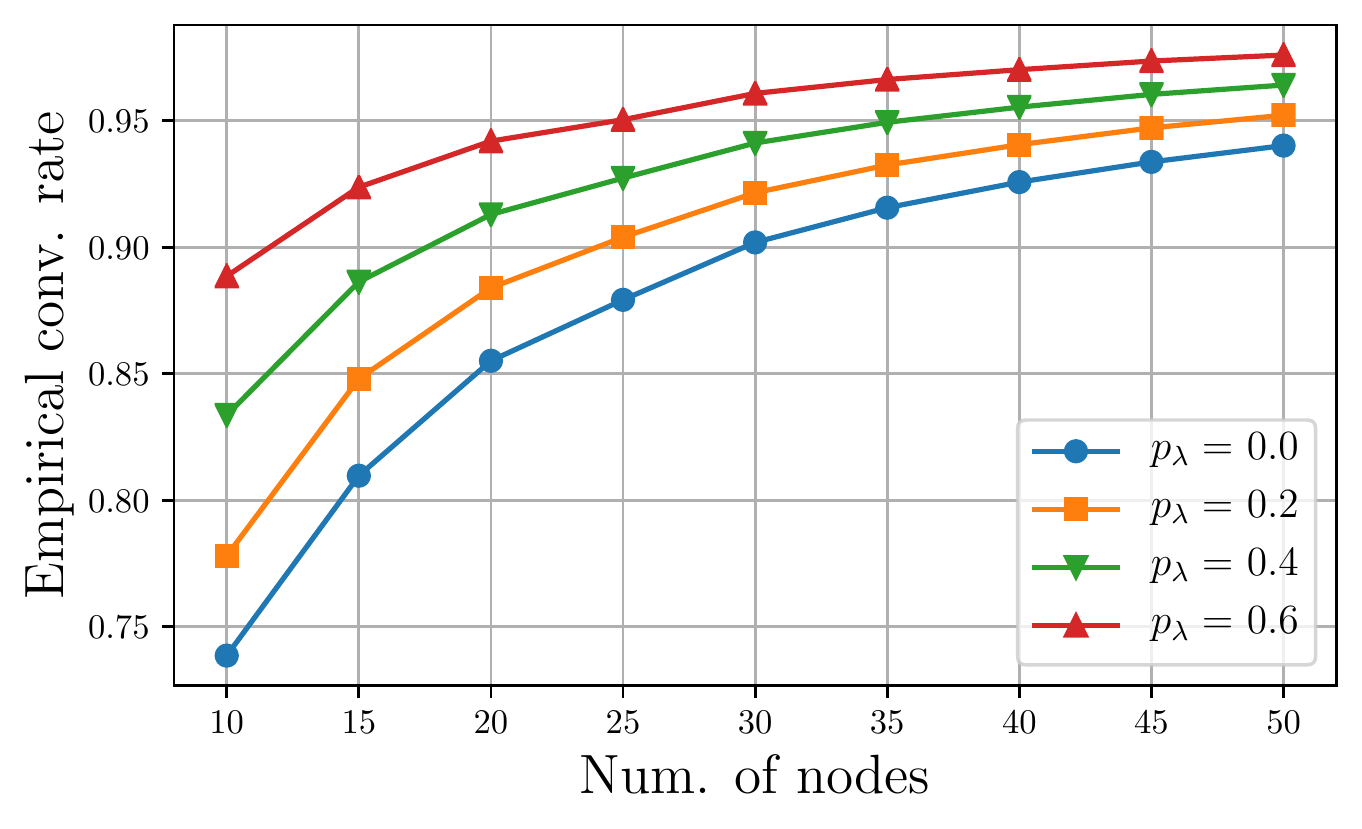}
	\caption{Empirical convergence rate for complete graphs of different size and for different $p_\lambda$, with $p_\mu = 0.8$, $n = 2$, $\alpha = 0.95$, $\rho = 0.5$.}
	\label{fig:convergence-rate-nodes}
\end{figure}
As remarked above, the larger $p_\lambda$ is, the larger $\hat{\gamma}$; and, in this particular case, the convergence rate degrades monotonically with the number of nodes in the graph.

\subsection{Quartic function}
In order to study the effect of curvature on the convergence of the algorithm, we considered a random graph with $N = 10$, $n = 1$, and all costs equal to
$$
	f_i(x) = \frac{x^4}{12} + q \frac{x^2}{2}, \qquad x \in \R, \ \ q > 0.
$$
Fig.~\ref{fig:quartic} depicts the logarithmic error for different values of the parameter $q$. Clearly, the curvature of the cost may deeply affect the rate of convergence.
\begin{figure}[!ht]
	\centering
	\includegraphics[width=0.49\textwidth]{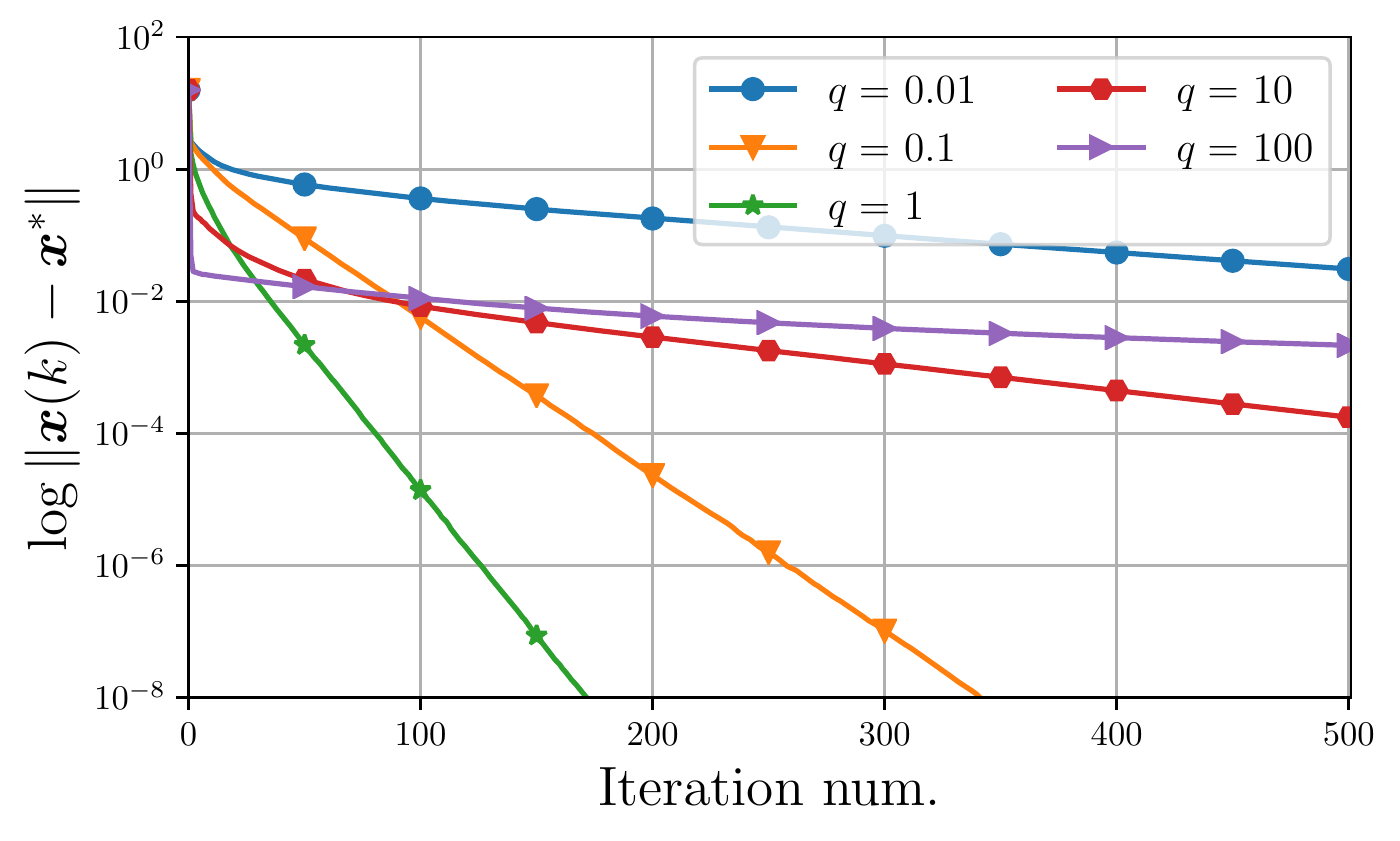}
	\caption{Logarithmic error for different values of $q$, with $p_\lambda = 0.5$ $p_\mu = 0.8$, $\alpha = 0.95$, $\rho = 0.5$.}
	\label{fig:quartic}
\end{figure}
Notice moreover that the convergence is globally linear, although the theoretical results guarantee only local linear convergence.

\section{Conclusions and Future Directions}\label{sec:conclusions}
In this paper we addressed distributed convex optimization problems over peer-to-peer networks with both unreliable communications and asynchronous updates of the nodes. We proposed a modified version of the relaxed ADMM that, exploiting operator theoretical results, can be shown to converge almost surely. Moreover, by further assuming the local costs to be strongly convex, we proved local linear mean convergence of the proposed algorithm.

\noindent We have cast the proposed algorithm in the context of the literature and discussed its novelty. And finally, we have presented interesting numerical results that showcase the resilience and robustness of the proposed algorithm.

\appendices

\ifarxiv

\section{Derivation of~\eqref{eq:r-admm}}\label{app:derivation-splitting-admm}
The Peaceman-Rachford splitting~\eqref{eq:iterates-splitting} applied to the dual problem~\eqref{eq:admm-dual-problem} is characterized by the updates
\begin{subequations}\label{eq:prs-three-eqs}
\begin{align}
	w(k+1) &= \prox_{\rho d_f}(z(k)) \label{eq:prs-three-eqs-w} \\
	v(k+1) &= \prox_{\rho d_g}(2w(k+1) - z(k)) \label{eq:prs-three-eqs-v} \\
	z(k+1) &= z(k) + 2\alpha (v(k+1) - w(k+1)).
\end{align}
\end{subequations}
We show now that~\eqref{eq:prs-three-eqs-w} is equivalent to~\eqref{eq:r-admm-x}~and~\eqref{eq:r-admm-w}; the same argument can be applied to~\eqref{eq:prs-three-eqs-v}.

By the definition of proximal operator and of $d_f$ it holds
\begin{align}
	&\prox_{\rho d_f}(z(k)) = \argmin_w \left\{ f^*(A^\top w) + \frac{1}{2\rho} \norm{w - z(k)}^2 \right\} \nonumber \\
	&\ = \argmin_w \left\{ \max_x \left\{ \langle A^\top w, x \rangle - f(x) \right\} + \frac{1}{2\rho} \norm{w - z(k)}^2 \right\} \label{eq:prox-df}
\end{align}
where we applied the definition of convex conjugate. Consider now the minimum in~\eqref{eq:prox-df}, we have
\begin{align}
	&\min_w \left\{ \max_x \left\{ \langle A^\top w, x \rangle - f(x) \right\} + \frac{1}{2\rho} \norm{w - z(k)}^2 \right\} = \nonumber \\
	&= \min_w \max_x \left\{ \langle A^\top w, x \rangle - f(x) + \frac{1}{2\rho} \norm{w - z(k)}^2 \right\} \nonumber \\
	&= \max_x \min_w \left\{ \langle A^\top w, x \rangle - f(x) + \frac{1}{2\rho} \norm{w - z(k)}^2 \right\} \label{eq:max-min}
\end{align}
and by the first optimality condition for the innermost minimization it must hold:
\begin{equation}\label{eq:derived-w}
	w = z(k) - \rho A x
\end{equation}
which is exactly~\eqref{eq:r-admm-w}. Substituting~\eqref{eq:derived-w} into~\eqref{eq:max-min} yields
$$
	\eqref{eq:max-min} = \max_x \left\{ \langle z(k) - \rho A x, A x \rangle - f(x) + \frac{\rho}{2} \norm{A x}^2 \right\}
$$
and changing the sign of the cost function we obtain~\eqref{eq:r-admm-x}. \cvd

\section{Proof of equivalence between~\eqref{eq:r-admm}~and~\eqref{eq:lagrangian-r-admm}}\label{app:equivalence}
We show that we can derive~\eqref{eq:lagrangian-r-admm} from~\eqref{eq:r-admm}, thus proving their equivalence.

We derive first some equalities that will be useful in the following. By~\eqref{eq:r-admm-w} we have
\begin{equation}\label{eq:z-with-w}
	z(k) = w(k+1) + \rho A x(k+1)
\end{equation}
and using this fact into~\eqref{eq:r-admm-v} yields
\begin{equation}\label{eq:v-with-w}
	v(k+1) = w(k+1) - \rho (A x(k+1) + B y(k+1) - c).
\end{equation}
Moreover, substituting~\eqref{eq:v-with-w} into~\eqref{eq:r-admm-v} we obtain
\begin{equation}
	2w(k+1) - z(k) = w(k+1) - \rho A x(k+1).
\end{equation}

Consider now~\eqref{eq:r-admm-y}, the following chain of equalities holds
\begin{align*}
	&y(k+1) = \argmin_y \Big\{ g(y) + \rho \langle A x(k+1), By \rangle + \\ &\quad- \langle w(k+1), A x(k+1) + B y - c \rangle + \frac{\rho}{2} \norm{By - c}^2 \Big\} \\
	&= \argmin_y \Big\{ f(x(k+1)) + g(y) + \\ &\quad- \langle w(k+1), A x(k+1) + B y - c \rangle + \\ &\quad+ \frac{\rho}{2} \norm{A x(k+1)}^2 + \rho \langle A x(k+1), By \rangle + \frac{\rho}{2} \norm{By - c}^2\Big\} \\
	&= \argmin_y \mathcal{L}_\rho(x(k+1),y,w(k+1))
\end{align*}
where we derived the first by subtracting $\langle w(k+1), A x(k+1) - c \rangle$, the second by adding $f(x(k+1))$, $(\rho / 2) \norm{A x(k+1)}^2$, and $\rho \langle A x(k+1), -c \rangle$, which is allowed since they do not depend on $y$. The third equality holds by definition of square norm and augmented Lagrangian~\eqref{eq:augmented-lagrangian}. We have thus derived~\eqref{eq:lagrangian-r-admm-y}.

Substituting~\eqref{eq:v-with-w} and~\eqref{eq:z-with-w} into~\eqref{eq:r-admm-z} yields
\begin{align}
	z(k+1) &= w(k+1) + \rho A x(k+1) + \nonumber \\ &- 2\alpha\rho (A x(k+1) + B y(k+1) - c) \nonumber \\
	&= w(k+1) - \rho (By(k+1) - c) + \label{eq:z-with-w-bis} \\ &- \rho(2\alpha - 1) (A x(k+1) + B y(k+1) - c) \nonumber
\end{align}
where the second equation was obtained adding and subtracting $\rho (B y(k+1) - c)$. Finally, evaluating~\eqref{eq:z-with-w-bis} at iteration $k$ and using~\eqref{eq:z-with-w} we get~\eqref{eq:lagrangian-r-admm-w}.

From~\eqref{eq:r-admm-x} and using~\eqref{eq:z-with-w-bis} at iteration $k$ we can write
\begin{align*}
	&x(k+1) = \argmin_x \Big\{ f(x) - \langle w(k), A x \rangle + \\ &\quad + \rho(2\alpha - 1) \langle A x(k) + B y(k) - c, A x \rangle + \\ &\quad + \rho \langle B y(k) - c, A x \rangle + \frac{\rho}{2} \norm{A x}^2 \Big\} \\
	&= \argmin_x \Big\{ f(x) + g(y(k)) - \langle w(k), A x + B y(k) - c \rangle + \\ &\quad + \rho(2\alpha - 1) \langle A x(k) + B y(k) - c, A x \rangle + \\ &\quad + \frac{\rho}{2} \norm{B y(k) - c}^2 + \rho \langle B y(k) - c, A x \rangle + \frac{\rho}{2} \norm{A x}^2 \Big\} \\
	&= \argmin_x \Big\{ \mathcal{L}_\rho(x,y(k),w(k)) + \\ &\quad + \rho(2\alpha - 1) \langle A x(k) + B y(k) - c, A x \rangle \Big\}
\end{align*}
where the second equality was derived adding $g(y(k))$, $- \langle w(k), B y(k) - c \rangle$, and $(\rho / 2) \norm{B y(k) - c}^2$ which do not depend on $x$, and the third by the definitions of square norm and augmented Lagrangian. This proves equivalence of~\eqref{eq:r-admm-x} with~\eqref{eq:lagrangian-r-admm-x}.

Finally, by the results we derived above, we can see that if the initial conditions satisfy
$$
	z(0) = w(0) - \rho (2\alpha - 1) (A x(0) + B y(0) - c) - \rho (B y(0) - c).
$$
then the trajectories for $x$ and $y$ generated by the splitting R-ADMM and the Lagrangian R-ADMM coincide. \cvd

\fi

\section{Proofs of Section~\ref{sec:synchronous-ADMM}}

\subsection{Derivation of~\eqref{eq:distributed-admm}}\label{app:derivation-distributed-admm}
Following the derivation in \cite{peng_arock_2016}, we show that applying the R-ADMM of~\eqref{eq:r-admm} to the distributed problem of interest yields~\eqref{eq:distributed-admm}.

\paragraph*{$x$ update}
Using the particular structure of $\A$ we can see that in update~\eqref{eq:r-admm-x}:
$$
	\norm{\A \x}^2 = \sum_{i=1}^N d_i \norm{x_i}^2
$$
since each $x_i$ appears in $d_i$ constraints, \textit{i.e.} rows of $\A$. Moreover
$$
	\langle \A^\top \z(k), \x \rangle = \sum_{i=1}^N \langle \sum_{j \in \mathcal{N}_i} z_{ij}(k), x_i \rangle
$$
since the $i$-th row of $\A^\top$ sums over the auxiliary variables stored by $i$. Therefore~\eqref{eq:r-admm-x} becomes
$$
	\x(k+1) = \argmin_\x \sum_{i=1}^N \bigg\{ f_i(x_i) - \langle \sum_{j \in \mathcal{N}_i} z_{ij}(k), x_i \rangle + \frac{\rho d_i}{2} \norm{x_i^2} \bigg\}
$$
which is clearly separable over the single components. Moreover, node $i$ has all the information necessary to compute $x_i(k+1)$.

\paragraph*{$y$ update}
Update~\eqref{eq:r-admm-y} in the distributed scenario becomes
$$
	\y(k+1) = \argmin_{\y = \P \y} \left\{ \langle 2\w(k+1) - \z(k), \y \rangle + \frac{\rho}{2} \norm{\y}^2 \right\}
$$
whose KKT conditions are
\begin{subequations}
\begin{align}
	\rho \y &= (\I - \P) \mathbold{\nu} - (2\w(k+1) - \z(k)) \label{eq:kkt-1} \\
	\y &= \P \y \label{eq:kkt-2}
\end{align}
\end{subequations}
where $\mathbold{\nu}$ is the vector of Lagrange multipliers. Plugging~\eqref{eq:kkt-1} into~\eqref{eq:kkt-2} yields
\begin{align}
	\rho \y &= \P (\I - \P) \mathbold{\nu} - \P (2\w(k+1) - \z(k)) \nonumber \\
	&= -(\I - \P) \mathbold{\nu} - \P (2\w(k+1) - \z(k)) \label{eq:kkt-sum}
\end{align}
where the second equality was derived using $\P^2 = \I$, which implies $\P(\I - \P) = - (\I - \P)$.
Finally, summing~\eqref{eq:kkt-1} and~\eqref{eq:kkt-sum} we get
\begin{equation}\label{eq:explicit-y-update}
	\y(k+1) = - \frac{1}{2\rho} (\I + \P) (2\w(k+1) - \z(k))
\end{equation}
which means that $y_{ij}(k+1) = y_{ji}(k+1)$ for any $(i,j) \in \mathcal{E}$ and $k \in \N$.

\paragraph*{$z$ update}
Using~\eqref{eq:r-admm-w}, and substituting~\eqref{eq:explicit-y-update} into~\eqref{eq:r-admm-v}, the auxiliary update~\eqref{eq:r-admm-z} becomes
\begin{equation}\label{eq:z-update-matricial}
	\z(k+1) = (1-\alpha) \z(k) - \alpha \P \z(k) + 2\alpha\rho \P \A \x(k+1)
\end{equation}
and using the definitions of $\A$, $\P$ proves~\eqref{eq:distributed-admm-z}. \cvd

\subsection{Proof of Proposition~\ref{pr:local-linear-convergence}}\label{app:linear_convergence}
The proof is divided in the following steps: \textit{(i)} write the auxiliary update of Algorithm~\ref{alg:distributed-admm} as a perturbed affine operator; \textit{(ii)} bound the primal error with the error on the auxiliary variable; \textit{(iii)} show that the primal error converges linearly for a quadratic approximation; \textit{(iv)} extend the result to the general case.

\paragraph*{\textit{(i)} Perturbed affine operator}
From the first order optimality condition for~\eqref{eq:distributed-admm-x} it must hold for any $i \in \mathcal{V}$
\begin{equation}\label{eq:1st-opt-cond}
	\nabla f_i(x_i(k+1)) - [\A^\top \z(k)]_i + \rho d_i x_i(k+1) = 0.
\end{equation}
Therefore using the Taylor expansion of the gradient $\nabla f_i$ around $x^*$ we have
\begin{equation}\label{eq:taylor-expansion}
	\nabla f_i(x_i) = \nabla f_i(x^*) + \nabla^2 f_i(x^*) (x_i - x^*) + o(x_i - x^*)
\end{equation}
for $x \in \mathcal{B}_{x^*}$, where $o : \R^n \to \R^n$ is such that $\norm{o(x_i - x^*)} / \norm{x_i - x^*} \to 0$ as $x_i \to x^*$.
Combining~\eqref{eq:1st-opt-cond}~and~\eqref{eq:taylor-expansion}, the latter evaluated in $x_i(k+1)$, yields
\begin{equation*}
\begin{split}
	&[\A^\top \z(k)]_i - \rho d_i x_i(k+1) = \nabla f_i(x^*) + \\ &\qquad + \nabla^2 f_i(x^*) (x_i(k+1) - x^*) + o(x_i(k+1) - x^*)
\end{split}
\end{equation*}
and solving for $x_i(k+1)$ we get
\begin{align}\label{eq:affine-x_i-update}
	x_i(k+1) &= (\rho d_i I_n + \nabla^2 f_i(x^*))^{-1} \Big[ [\A^\top \z(k)]_i + \\ & + \nabla^2 f_i(x^*) x^* - \nabla f_i(x^*) + o(x_i(k+1) - x^*) \Big]. \nonumber
\end{align}
Stacking the updates~\eqref{eq:affine-x_i-update} for $i \in \mathcal{V}$ we can write
\begin{equation}\label{eq:affine-x-update}
	\x(k+1) = \Hm^{-1} \Big[ \A^\top \z(k) + \g + \o(\x(k+1) - \x^*) \Big]
\end{equation}
where $\Hm = \operatorname{blk\,diag} \left\{ \rho d_i I_n + \nabla^2 f_i(x^*) \right\}$, $\g$ and $\o$ stack $\nabla^2 f_i(x^*) x^* - \nabla f_i(x^*)$ and $o(x_i(k+1) - x^*)$, respectively.

Using the auxiliary update~\eqref{eq:z-update-matricial} and~\eqref{eq:affine-x-update} we can write
\begin{align}
	\z(k+1&) = (1-\alpha) \z(k) - \alpha \P \z(k) + \nonumber \\ &+ 2\alpha\rho \P \A \Hm^{-1} \Big[ \A^\top \z(k) + \g + \o(\x(k+1) - \x^*) \Big] \nonumber \\
	&=: \T \z(k) + \uv + \o'(\x(k+1) - \x^*) \label{eq:perturbed-affine-update}
\end{align}
where $\T=(1-\alpha) \I - \alpha \P + 2\alpha\rho \P \A \Hm^{-1} \A^\top $, $\uv=2\alpha\rho \P \A \Hm^{-1} \g$, and $\o' : \R^{nN} \to \R^{nM}$, $\o'(\cdot)= 2\alpha\rho \P \A \Hm^{-1}\, \o(\cdot)$, decays faster than the argument.

\begin{remark}\label{rem:symmetry-T}
Using the particular structure of $\A$ we see that $\A \Hm^{-1} \A^\top = \operatorname{blk\,diag} \left\{ \1_{d_i \times d_i} \otimes (\rho d_i I_n + \nabla^2 f_i(x^*)) \right\}$. But since $f_i \in \mathcal{C}^2$ for any $i$, the Hessians $\nabla^2 f_i(x^*)$ are symmetric and thus $\T$ is symmetric as well.
\end{remark}

\paragraph*{\textit{(ii)} Primal error bound}
We start by stating the following result.
\begin{lemma}\label{lem:x_i-prox}
The update \eqref{eq:distributed-admm-x} can be rewritten as 
$$
	x_i(k+1)=\prox_{f_i / (\rho d_i)} \Big([\A^\top \z(k)]_i / (\rho d_i) \Big).
$$
\end{lemma}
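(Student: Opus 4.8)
The plan is to verify that the two expressions for $x_i(k+1)$ coincide by unfolding the definition of the proximal operator and completing the square; this is a short computation rather than a deep argument. The first ingredient I would record is the identity $[\A^\top \z(k)]_i = \sum_{j \in \mathcal{N}_i} z_{ij}(k)$, which is immediate from the block structure of $\A$: the $i$-th block-row of $\A^\top$ is $\1_{d_i}^\top \otimes I_n$ acting on the sub-vector $\{z_{ij}(k)\}_{j \in \mathcal{N}_i}$ of $\z(k)$, so it simply sums the auxiliary variables stored at node $i$. This is the same observation already used in Appendix~\ref{app:derivation-distributed-admm} to obtain \eqref{eq:distributed-admm-x}.

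Next I would expand the right-hand side of the claimed identity. Reading the subscript $f_i/(\rho d_i)$ as ``the function $f_i$ with penalty parameter $1/(\rho d_i)$'', the definition of the proximal operator gives
$$
	\prox_{f_i / (\rho d_i)}\!\Big( \tfrac{1}{\rho d_i}[\A^\top \z(k)]_i \Big) = \argmin_{y} \left\{ f_i(y) + \frac{\rho d_i}{2} \norm{ y - \tfrac{1}{\rho d_i}[\A^\top \z(k)]_i }^2 \right\}.
$$
Expanding the quadratic term, $\tfrac{\rho d_i}{2}\norm{y - \tfrac{1}{\rho d_i}[\A^\top \z(k)]_i}^2 = \tfrac{\rho d_i}{2}\norm{y}^2 - \langle [\A^\top \z(k)]_i, y \rangle + \tfrac{1}{2\rho d_i}\norm{[\A^\top \z(k)]_i}^2$; the last summand is independent of $y$ and can be dropped from the $\argmin$ without affecting the minimizer. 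What remains is $\argmin_y \{ f_i(y) - \langle [\A^\top \z(k)]_i, y \rangle + \tfrac{\rho d_i}{2}\norm{y}^2 \}$, which, after substituting $[\A^\top \z(k)]_i = \sum_{j \in \mathcal{N}_i} z_{ij}(k)$, is exactly the minimization \eqref{eq:distributed-admm-x} defining $x_i(k+1)$.

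There is essentially no obstacle to overcome; the only points worth a remark are (a) matching the scalar appearing in the subscript of $\prox$ with the correct penalty parameter $1/(\rho d_i)$, and (b) well-definedness of the $\argmin$, which holds because $f_i \in \Gamma_0(\R^n)$ and the added strongly convex quadratic makes the objective proper, closed and coercive, so the minimizer exists and is unique (equivalently, one may invoke that the proximal operator of a $\Gamma_0$ function is everywhere single-valued). I would close by noting that this reformulation is precisely what makes \eqref{eq:primal-update} available and serves as the starting point for the primal error bound in step \textit{(ii)} of the proof of Proposition~\ref{pr:local-linear-convergence}.
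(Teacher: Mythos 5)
Your proposal is correct and follows essentially the same route as the paper: identifying $[\A^\top \z(k)]_i = \sum_{j \in \mathcal{N}_i} z_{ij}(k)$ and completing (respectively expanding) the square to match the objective in \eqref{eq:distributed-admm-x} with the proximal objective for $f_i$ with penalty $1/(\rho d_i)$, the constant term being irrelevant to the $\argmin$. The added remark on existence and uniqueness of the minimizer is a harmless extra the paper leaves implicit.
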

\begin{proof}
Adding the term $\norm{[\A^\top \z(k)]_i}^2 / (\rho d_i)$ -- which does not depend on $x_i$ -- to the objective function in~\eqref{eq:distributed-admm-x} and using the definition of norm we can write
\begin{align*}
	x_i(k+1) &= \argmin_{x_i} \left\{ f_i(x_i) + \frac{\rho d_i}{2} \norm{x_i - \frac{1}{\rho d_i} [\A^\top \z(k)]_i}^2 \right\} \\
	&= \prox_{f_i / (\rho d_i)} \Big([\A^\top \z(k)]_i / (\rho d_i) \Big)
\end{align*}
where the second equality follows from definition of proximal operator, see section~\ref{subsec:operator-theory}.
\end{proof}
\noindent Denote by $m_i$ the strong convexity modulus of function $f_i$, then we know that the proximal $\prox_{f_i / (\rho d_i)}$ is $ 1 / (1+m_i / (\rho d_i))$-contractive \cite{giselsson_diagonal_2014}, which implies
$$
	\norm{x_i(k+1) - x^*} \leq \frac{1}{m_i + \rho d_i} \norm{[\A^\top (\z(k) - \bar{\z})]_i}
$$
for any $\bar{\z} \in \fix(\Tpr)$. Then
\begin{align}
	\| \x(k+1) - \x^* \|^2 	&\leq \sum_{i=1}^N \frac{1}{(m_i + \rho d_i)^2} \norm{[\A^\top (\z(k) - \bar{\z})]_i}^2 \nonumber \\
	&\leq \zeta^2 \norm{\A^\top (\z(k) - \bar{\z})}^2, \label{eq:primal-error-bound}
\end{align}
where
\begin{align}\label{eq:zeta}
\zeta=\max_i \left\{ 1 / (m_i + \rho d_i) \right\}.
\end{align}

\paragraph*{\textit{(iii)} Linear convergence (quadratic case)}
Assume that the functions $f_i$ are quadratic and, more specifically, \eqref{eq:taylor-expansion} holds true with the residual equal
to $0$.
In this case the Peaceman-Rachford operator is affine and averaged, and the auxiliary update becomes $\z(k+1) = \T \z(k) + \uv$.
The following result characterizes the spectral properties of $\T$.
\begin{lemma}\label{lem:T_properties}
The eigenvalues of $\T$ are either equal to $1$ or strictly inside the unitary circle. Moreover the eigenvalues in $1$ are all semi-simple. In addition the following property holds
\begin{equation}\label{eq:KernelsInc}
	\ker(\I - \T) \subset \ker(\A^\top).
\end{equation}
\end{lemma}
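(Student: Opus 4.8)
The plan is to analyze $\T = (1-\alpha)\I - \alpha\P + 2\alpha\rho\,\P\A\Hm^{-1}\A^\top$ by relating it to a simpler nonexpansive operator whose spectral structure is easier to read off. First I would observe, as in Remark~\ref{rem:symmetry-T}, that $\A\Hm^{-1}\A^\top$ is block diagonal and symmetric positive semidefinite, so $\S := \rho^{1/2}$-type scalings aside, the matrix $\B_0 := \rho\,\A\Hm^{-1}\A^\top$ satisfies $\0 \preceq \B_0 \preceq \I$ (the upper bound because $\rho d_i I_n \preceq \rho d_i I_n + \nabla^2 f_i(x^*) = \Hm_i$, hence $\rho\,\1_{d_i\times d_i}\otimes I_n \cdot (\text{something})\preceq$ the corresponding block; more carefully, each block of $\rho\A\Hm^{-1}\A^\top$ is $\1_{d_i\times d_i}\otimes(\rho\Hm_i^{-1})$ which has eigenvalues in $\{0\}\cup\{\rho d_i\,\lambda : \lambda\in\Lambda(\Hm_i^{-1})\}$ and $\rho d_i\,\lambda \le 1$ since $\Hm_i \succeq \rho d_i I$). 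The key identity to exploit is that $\T$ is conjugate/similar to the Peaceman–Rachford-type operator $\refl_{\rho d_g}\circ\refl_{\rho d_f}$ linearized at the optimum, or more directly: write $\T = \I - 2\alpha(\I - \M)$ where $\M := \tfrac12\big[(1-\alpha/\alpha)\cdots\big]$— actually the cleanest route is to recognize $\T$ as $(1-\alpha)\I + \alpha\,\mathcal R$ with $\mathcal R := -\P + 2\rho\,\P\A\Hm^{-1}\A^\top = \P(2\B_0 - \I)$, so $\mathcal R = \P\,(2\B_0-\I)$ is a product of the orthogonal involution $\P$ and the symmetric matrix $2\B_0-\I$ whose eigenvalues lie in $[-1,1]$. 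Hence $\mathcal R$ is nonexpansive (product of two nonexpansive maps), and $\T=(1-\alpha)\I+\alpha\mathcal R$ is $\alpha$-averaged.

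Given that $\T$ is $\alpha$-averaged with $\alpha\in(0,1)$, standard spectral facts about averaged linear operators give the first two claims: every eigenvalue $\lambda$ of $\T$ satisfies $|\lambda|\le 1$, and any eigenvalue on the unit circle must equal $1$ (if $\lambda = (1-\alpha) + \alpha\mu$ with $|\mu|\le 1$ and $|\lambda|=1$, then convexity of the disk forces $\mu = 1$, hence $\lambda = 1$). For semisimplicity of the eigenvalue $1$: since $\T$ is a convex combination of $\I$ and the nonexpansive $\mathcal R$, $\ker(\I-\T) = \ker(\I-\mathcal R)$, and a Jordan block of size $\ge 2$ at eigenvalue $1$ would make $\|\T^k\|\to\infty$, contradicting nonexpansiveness (this is the classical argument that power-bounded matrices have semisimple spectrum on the unit circle). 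Alternatively, and perhaps more robustly, I would note that $\mathcal R = \P(2\B_0-\I)$ and use the fact that $\fix(\mathcal R)$ and the generalized eigenspace coincide because... here one must be a little careful since $\mathcal R$ is not symmetric (it's a product $\P\cdot S$ of two symmetric involution/contraction matrices), so the power-boundedness argument is the safe one: $\|\mathcal R^k\| \le 1$ for all $k$ implies the minimal polynomial of $\mathcal R$ has no repeated roots on $|\mu|=1$, and the same transfers to $\T$.

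For the inclusion $\ker(\I-\T)\subset\ker(\A^\top)$: suppose $\T\z = \z$, i.e. $(1-\alpha)\z - \alpha\P\z + 2\alpha\rho\,\P\A\Hm^{-1}\A^\top\z = \z$, which rearranges to $\alpha(\I+\P)\z = 2\alpha\rho\,\P\A\Hm^{-1}\A^\top\z$, hence $(\I+\P)\z = 2\rho\,\P\A\Hm^{-1}\A^\top\z$. Multiply on the left by $\P$ and use $\P^2=\I$: $(\P+\I)\z = 2\rho\,\A\Hm^{-1}\A^\top\z$. Subtracting the two gives $(\I-\P)\,\rho\,\A\Hm^{-1}\A^\top\z = 0$ after rearranging — actually adding them gives $2(\I+\P)\z = 2\rho(\I+\P)\A\Hm^{-1}\A^\top\z$; the cleaner conclusion: from $(\I+\P)\z = 2\rho\P\A\Hm^{-1}\A^\top\z$ and its $\P$-transform $(\I+\P)\z = 2\rho\A\Hm^{-1}\A^\top\z$ we get $\P\A\Hm^{-1}\A^\top\z = \A\Hm^{-1}\A^\top\z$, and plugging back $(\I+\P)\z = 2\rho\,\A\Hm^{-1}\A^\top\z$. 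Now I would take the inner product of this last equation with $\z$ and combine with a Fenchel/duality or energy argument — or, more elementarily, use that $\fix(\T)$ is exactly $\fix(\Tpr)$ linearized, and fact \textit{(i)} stated in the main text (that $\fix(\Tpr)$ is an affine space with differences in $\ker(\A^\top)$). Concretely: differences of fixed points of $\Tpr$ lie in $\ker(\A^\top)$, and the linearization $\T$ has $\ker(\I-\T)$ equal to the tangent space of $\fix(\Tpr)$, which therefore lies in $\ker(\A^\top)$.

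The main obstacle I anticipate is the semisimplicity claim together with cleanly establishing $\ker(\I-\T)\subset\ker(\A^\top)$ without circularity: the power-boundedness argument for semisimplicity needs $\mathcal R$ (equivalently $\T$) to be genuinely nonexpansive in the Euclidean norm, which requires the bound $\0\preceq\B_0\preceq\I$ — the verification that $2\B_0-\I$ has spectral radius $\le 1$ is the one nontrivial computation, relying essentially on strong convexity ($\nabla^2 f_i(x^*)\succeq m_i I \succ 0$, so $\Hm_i \succ \rho d_i I$, giving the strict-but-for-kernel bound). For the kernel inclusion, the risk is that the argument loops back through fact \textit{(i)}; to avoid that I would give the direct linear-algebra derivation above, ending by testing $(\I+\P)\z = 2\rho\A\Hm^{-1}\A^\top\z$ against $\z$, which yields $\langle(\I+\P)\z,\z\rangle = 2\rho\|\Hm^{-1/2}\A^\top\z\|^2 \ge 0$, and then showing the left side forces $\A^\top\z = 0$ by exploiting the block structure of $\A$ and $\P$ (the pairing $x_i = y_{ij}$, $y_{ij}=y_{ji}$ encoded in $\A,\P$). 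I expect this last bookkeeping step — translating the abstract identity into the statement $\A^\top\z=0$ via the explicit incidence structure — to be the fiddliest part, though entirely mechanical.
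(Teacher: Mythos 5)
Your handling of the first two claims is correct and essentially a self-contained version of what the paper does by citation: you write $\T=(1-\alpha)\I+\alpha\,\P\left(2\rho\A\Hm^{-1}\A^\top-\I\right)$, check that $\rho\A\Hm^{-1}\A^\top$ has spectrum in $[0,1]$ because $\Hm_i=\rho d_i I_n+\nabla^2 f_i(x^*)\succeq \rho d_i I_n$ (plain convexity already suffices here), conclude $\norm{\T}\leq 1$, and get both the disk of center $1-\alpha$ and radius $\alpha$ and semi-simplicity of the eigenvalue $1$ from power-boundedness. The paper obtains exactly these two facts by invoking the generic spectral result for affine averaged operators and the convergence of the \km iteration, so on this part your route is a legitimate, arguably more explicit, alternative.

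On the inclusion $\ker(\I-\T)\subset\ker(\A^\top)$ the two arguments genuinely diverge, and your plan has a gap precisely at the step you flag. The paper's proof is a three-line primal argument: if $\bar{\z}\in\fix(\Tpr)$ and $\v_h\in\ker(\I-\T)$, then $\bar{\z}+c\v_h$ is again a fixed point, and since every fixed point reproduces the \emph{unique} minimizer through $\x^*=\Hm^{-1}[\A^\top\bar{\z}+\g]$, nonsingularity of $\Hm$ forces $\A^\top\v_h=\0$; this also shows your fallback through ``fact (i)'' would indeed be circular, since fact (i) is a consequence of this lemma. Your direct algebra correctly produces $(\I+\P)\z=2\rho\A\Hm^{-1}\A^\top\z$ and $\P\A\Hm^{-1}\A^\top\z=\A\Hm^{-1}\A^\top\z$, but the inner-product test you propose cannot close the argument: pairing with $\z$ only gives $\tfrac{1}{2}\norm{(\I+\P)\z}^2=2\rho\,\norm{\Hm^{-1/2}\A^\top\z}^2$, an identity between two nonnegative quantities that is perfectly consistent with $\A^\top\z\neq\0$. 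What actually finishes it are two ingredients you never name: connectedness of $\mathcal{G}$ and strong convexity. Componentwise, the second identity says $\Hm_i^{-1}[\A^\top\z]_i=\Hm_j^{-1}[\A^\top\z]_j$ for every edge, so by connectedness $\Hm_i^{-1}[\A^\top\z]_i=v$ for a common $v$; the first identity reads $z_{ij}+z_{ji}=2\rho v$ for every ordered pair, and summing over all $M$ ordered pairs gives $\sum_i[\A^\top\z]_i=M\rho v$, i.e. $\left(\rho M I_n+\sum_i\nabla^2 f_i(x^*)\right)v=\rho M v$, whence $\left(\sum_i\nabla^2 f_i(x^*)\right)v=\0$ and $v=\0$ by positive definiteness, so $[\A^\top\z]_i=\Hm_i v=\0$ for all $i$. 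With these two inputs your ``bookkeeping'' is indeed mechanical; without them the identities you stop at do not imply the claim, and the paper's uniqueness-of-$\x^*$ argument is the shorter way around.
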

\begin{proof}
For affine averaged operators the eigenvalues of $\T$ are all inside the circle on the complex plane with center $1-\alpha + i0$ and radius $\alpha$ \cite{iutzeler_generic_2019}. This implies that the unique eigenvalues of $\T$ with unitary absolute value are in $1$, and by convergence of the \km they are semi-simple.

\noindent Now let $\ker(\I - \T) = \operatorname{span} \{ \v_1, \ldots, \v_H \}$ where $H$ is the algebraic (and geometric) multiplicity of $1$.
Notice that, given $\bar{\z} \in \fix(\Tpr)$ and $v_h \in \ker(\I - \T)$, then $\bar{\z} + c \v_h \in \fix(\Tpr)$ for any $c \in \R$. For any $\bar{\z} \in \fix(\Tpr)$ from~\eqref{eq:affine-x-update} we have $\x^* = \Hm^{-1} [\A^\top \bar{\z} + \g]$, which, by uniqueness of $\x^*$, implies
\begin{align*}
	\x^* = \Hm^{-1} [\A^\top \bar{\z} + \g] = \Hm^{-1} [\A^\top (\bar{\z} + c \v_h) + \g].
\end{align*}
By the nonsingularity of $\Hm$ this condition implies $\v_h \in \ker(\A^\top)$, $h = 1, \ldots, H$, and thus $\ker(\I - \T) \subset \ker(\A^\top)$.
\end{proof}
Now by iterating $\z(k+1) = \T \z(k) + \uv$ we have
$$
	\z(k) = \T^{k} \z(0) + \sum_{\ell=0}^{k-1} \T^{k-1-\ell} \uv
$$
which is satisfied also by any $\bar{\z}  \in \fix(\Tpr)$. Thus
$
	\z(k) - \bar{\z} = \T^{k} (\z(0) - \bar{\z}),
$
and combining this fact with~\eqref{eq:primal-error-bound} yields
\begin{align*}
	\norm{\x(k+1) - \x^*} 
	&\leq \zeta \norm{\A^\top \T^{k}} \norm{\z(0) - \bar{\z}}.
\end{align*}
Since $\ker(\I - \T) \subset \ker(\A^\top)$, this implies that $\norm{\A^\top \T^{k}} \leq C' \gamma_{\mathrm{M}}^{k}$ where $\gamma_{\mathrm{M}}$ is the largest eigenvalue in absolute value different from $1$ of $\T$ and where
$C' > 0$, see \cite{iutzeler_generic_2019}. This proves the linear convergence of the primal error in the quadratic case.

\paragraph*{\textit{(iv)} Linear convergence (general case)}
Since the PRS operator is nonexpansive, it follows that $\|\z(k+1) - \z^*\| \leq \|\z(k) - \z^*\|$ and, in turn, that also the sequence $\norm{\x(k) - \x^*}$ is bounded.
Since $\lim_{k \to \infty} \norm{\x(k+1) - \x^*} = 0$, by the definition of $\o'$ we can argue that there exists a sequence of positive numbers $\{ \delta_k \}_{k \geq 1}$ such that $\delta_{k+1} \leq \delta_k$, $\lim_{k \to \infty} \delta_k = 0$ and
$$
	\norm{\o'(\x(k+1) - \x^*)} \leq \delta_{k+1} \norm{\x(k+1) - \x^*}.
$$
Therefore, by iterating \eqref{eq:perturbed-affine-update} and exploiting \eqref{eq:primal-error-bound} 
 the primal error, for $k \geq 1$, can be bounded as
\begin{align*}
	& \norm{\x(k+1) - \x^*} \leq \\
	& \qquad C'' \gamma_{\mathrm{M}}^{k} +  \sum_{\ell = 0}^{k-1} \zeta C' \gamma_{\mathrm{M}}^{k-1-\ell} \delta_{\ell+1} \norm{\x(\ell+1) - \x^*}
\end{align*}
where $C'' = \zeta C' \norm{\z(0) - \bar{\z}}$.
Consider now the sequence $\{ e(k) \}_{k \geq 1}$ such that $e(1) = \norm{\x(1) - \x^*}$, $e(2) = C'' \gamma_{\mathrm{M}} + \zeta C' \delta_1 \norm{\x(1) - \x^*}$ and
\begin{equation}\label{eq:e_k+1}
	e(k+1) = (\gamma_{\mathrm{M}} + \zeta C' \delta_k) e(k), \quad k \geq 2.
\end{equation}
Recalling the definition of $\o'$, we know that there exists a ball $\mathcal{B}_{x^*}$ centered in $x^*$ such that, if $x_i(1)$ belongs to $\mathcal{B}_{x^*}$, $i \in \mathcal{V}$, then $\norm{\o(\x(1) - \x^*)} \leq \delta_1 \norm{\x(1) - \x^*}$ with $\delta_1$ such that $\gamma_{\mathrm{M}} + \zeta C'\delta_1 < 1$, \textit{i.e.} $\delta_1 < (1-\gamma_{\mathrm{M}}) / (\zeta C')$. In this case, by a standard inductive argument, one can show that $\norm{\x(k) - \x^*} \leq e(k)$ for all $k \geq1$.
Notice that in view of~\eqref{eq:Bound_primal}, there exists $\epsilon>0$ such that if  $\operatorname{dist}(\z(0), \fix(\Tpr)) \leq \epsilon$ then $\x(1)\in \mathcal{B}_{x^*}$. Now from \eqref{eq:e_k+1}
$$
	e(k) = \prod_{\ell = 2}^{k-1} (\gamma_{\mathrm{M}} + \zeta C\delta_\ell) e(2), \quad k \geq 2,
$$
from which, since $|\gamma_{\mathrm{M}} + \zeta C'\delta_\ell| < 1$ for all $\ell$, we get that $\lim_{k \to \infty} e(k) = 0$. We conclude the proof by observing that, for any $\xi > 0$, it holds
\begin{equation}
	\lim_{k \to \infty} \frac{\prod_{\ell=2}^{k-1} (\gamma_{\mathrm{M}} + \zeta C\delta_\ell)}{(\gamma_{\mathrm{M}} + \xi)^{k-3}} = 0. \tag*{$\square$}
\end{equation}

\section{Proofs of Section~\ref{sec:robust-ADMM}}

\subsection{Proof of Proposition~\ref{cr:convergence_lossy}}\label{app:convergence-lossy}
As discussed in Section~\ref{sec:robust-ADMM}, loss of transmissions and asynchronous updates are taken into account by Algorithm~\ref{alg:robust-distributed-admm} by updating a $z$ auxiliary variable only if new information is available. But since the R-ADMM is the Peaceman-Rachford splitting applied to the dual of~\eqref{eq:admm-problem}, we can interpret Algorithm~\ref{alg:robust-distributed-admm} as a randomized Peaceman-Rachford in which each coordinate of the $\z$ vector is randomly updated with nonzero probability. Therefore the convergence results of \cite[Theorem~3]{bianchi_coordinate_2016} or (a particular case of) \cite[Theorem~3.2]{combettes_stochastic_2015} can be applied to prove almost sure convergence to the dual solution and, in turn, by strong duality, to the primal solution.
\cvd

\subsection{Derivation of~\eqref{eq:MainInequality}}\label{app:main-inequality}
By~\eqref{eq:Bound_primal} we have $\expval{\norm{\x(k+1) - \x^*}} \leq \zeta \expval{\norm{\A^\top (\z(k+1) - \bar{\z})}}$, and our goal is to find a bound for the right-hand side. Iterating~\eqref{eq:randomized-perturbed-update} we get
\begin{align}
	\z(k+1) - \bar{\z} &= \prod_{\ell = 0}^{k-1} \hat{\T}(\ell) (\z(0) - \bar{\z}) + \label{eq:z-error}\\
	&+ \sum_{h = 0}^{k-1} \left( \prod_{\ell = h+1}^{k-1} \hat{\T}(\ell) \right) \B(h) \o'(\x(h+1) - \x^*) \nonumber.
\end{align}
Multiplying by $\A^\top$, taking the norm of~\eqref{eq:z-error} and using triangle inequality and submultiplicativity then yields
\begin{align}
	&\norm{\A^\top \z(k+1) - \bar{\z}} \leq \norm{\A^\top \prod_{\ell = 0}^{k-1} \hat{\T}(\ell)} \norm{\z(0) - \bar{\z}} + \\
	&+ \sum_{h = 0}^{k-1} \norm{\A^\top \left( \prod_{\ell = h+1}^{k-1} \hat{\T}(\ell) \right)} \norm{\B(h)} \norm{\o'(\x(h+1) - \x^*)} \nonumber.
\end{align}

Now, taking the expectation we get:
\begin{align}
	&\expval{\norm{\A^\top \z(k+1) - \bar{\z}}} \leq \expval{\norm{\A^\top \prod_{\ell = 0}^{k-1} \hat{\T}(\ell)}} \norm{\z(0) - \bar{\z}} + \nonumber \\
	&+ \sum_{h = 0}^{k-1} \expval{\norm{\A^\top \left( \prod_{\ell = h+1}^{k-1} \hat{\T}(\ell) \right)}} \expval{\norm{\B(h)}} \times \nonumber \\ &\times \expval{\norm{\o'(\x(h+1) - \x^*)}} \label{eq:z-error-expected}.
\end{align}
Finally, by Jensen's inequality for concave functions (as the square root is), we know that
$$
	\expval{\norm{\cdot}} = \expval{\sqrt{\norm{\cdot}^2}} \leq \sqrt{\expval{\norm{\cdot}^2}}
$$
and using this fact into~\eqref{eq:z-error-expected} yields~\eqref{eq:MainInequality}.

\subsection{Proof of Proposition~\ref{pr:randomized-linear-convergence}}\label{app:randomized-linear_convergence}
The proof consists of the following steps: \textit{(i)} derive the auxiliary update in Algorithm~\ref{alg:robust-distributed-admm} as a perturbed, randomized affine operator, and characterize its properties; \textit{(ii)} bound the mean primal error for the quadratic approximation with the auxiliary error, which converges linearly; \textit{(iii)} extend the result to the general case.

\paragraph*{\textit{(i)} Randomized perturbed affine operator}
Observe that, from~\eqref{eq:random-z_ij}, \eqref{eq:perturbed-affine-update} and, recalling the definition of $\B(k)$, we can write
\begin{align}
	&\z(k+1) = (\I - \B(k)) \z(k) + \nonumber \\ &\hspace{1.3cm} + \B(k) \left[ \T \z(k) + \uv + \o'(\x(k+1) - \x^*)\right] \nonumber \\
	&= \hat{\T}(k) \z(k) + \B(k) \left[ \uv + \o'(\x(k+1) - \x^*) \right] \label{eq:random-perturbed-update}
\end{align}
where $\hat{\T}(k) := \I - \B(k) (\I - \T)$. Let $\bar{\z} \in \fix(\Tpr)$, then since $\bar{\z} = \T \bar{\z} + \uv$, we have $\bar{\z} = \hat{\T}(k) \bar{\z} + \B(k) \uv$ for any $k \in \N$. Thus iterating~\eqref{eq:random-perturbed-update} and subtracting $\bar{\z}$ yields
\begin{equation}\label{eq:iterated-randomized-update}
\begin{split}
	&\z(k+1) - \bar{\z} = \prod_{\ell = 0}^{k} \hat{\T}(\ell) (\z(0) - \bar{\z}) + \\ &\qquad + \sum_{h = 0}^k \left( \prod_{\ell = h+1}^{k} \hat{\T}(\ell) \right) \B(h) \o'(\x(h+1) - \x^*)
\end{split}
\end{equation}
where by convention $\prod_{\ell = k+1}^{k} \hat{\T}(\ell) = \I$. Let us consider now the quadratic case that we have assuming \eqref{eq:taylor-expansion} holds true with the residual equal
to $0$. In this case~\eqref{eq:iterated-randomized-update} becomes 
\begin{equation}\label{eq:iterated-randomized-update-quad}
	\z(k+1) - \bar{\z} = \prod_{\ell = 0}^k \hat{\T}(\ell) (\z(0) - \bar{\z}).
\end{equation}
By Proposition~\ref{cr:convergence_lossy} we know that $\z(k+1)$ converges with probability one to a fixed point $\bar{\z}' \in \fix(\Tpr)$, in general different from $\bar{\z}$, which implies from~\eqref{eq:iterated-randomized-update-quad} that
\begin{equation}\label{eq:as-iterated-convergence}
	\bar{\z}' - \bar{\z} \stackrel{\mathrm{a.s.}}{=} \lim_{k \to \infty} \left( \z(k+1) - \bar{\z} \right) = \lim_{k \to \infty} \prod_{\ell = 0}^k \hat{\T}(\ell) (\z(0) - \bar{\z}).
\end{equation}
Notice that, given any two fixed points $\bar{\z}, \bar{\z}' \in \fix(\Tpr)$, it holds $\bar{\z}' - \bar{\z} \in \ker(\I - \T)$, thus for~\eqref{eq:as-iterated-convergence} to be true there must exist $c_1, \ldots, c_H$ random variables such that
$$
	\lim_{k \to \infty} \prod_{\ell = 0}^k \hat{\T}(\ell) (\z(0) - \bar{\z}) = \sum_{h = 1}^H c_h \v_h,
$$
where recall that $\ker(\I - \T) = \operatorname{span} \{ \v_1, \ldots, \v_H \}$.
The realizations of $c_h$, $h=1,\ldots,H$ depend on the realizations of $\hat{\T}(\ell)$, $\ell \in \N$ and on the initial condition $\z(0)$.
In general $\z(0) - \bar{\z} \not\in \ker(\I - \T)$, which implies that we are able to find the random vectors $\eps_1, \ldots \eps_H$ such that
\begin{equation}\label{eq:evolution-T}
	\lim_{k \to \infty} \prod_{\ell = 0}^k \hat{\T}(\ell) = \sum_{h = 1}^H \v_h \eps_h^\top \quad \text{and} \quad c_h = \eps_h^\top (\z(0) - \bar{\z}).
\end{equation}

\paragraph*{\textit{(ii)} Mean error bound}
By iterating~\eqref{eq:iterated-randomized-update-quad}, taking the expectation, exploiting~\eqref{eq:primal-error-bound} and Jensen's inequality, we can write
\begin{align*}
	&\expval{\norm{\x(k+1) - \x^*}} \leq \zeta \sqrt{\expval{\norm{\A^\top (\z(k) - \bar{\z})}^2}} \\
	&\qquad = \zeta \sqrt{(\z(0) - \bar{\z})^\top \Dm(k) (\z(0) - \bar{\z})}
\end{align*}
where
$$
	\Dm(k) := \expval{ \bigg( \prod_{\ell = 0}^{k-1} \hat{\T}(\ell) \bigg)^\top \A \A^\top \bigg( \prod_{\ell = 0}^{k-1} \hat{\T}(\ell) \bigg) },
$$
if $k\geq 1$, otherwise $\Dm(0) =  \A \A^\top $. 
Therefore, with a simple recursive argument, we can characterize the bound for the primal error in terms of the evolution of the linear system
\begin{align}
	\Dm(k+1) &= \expval{\hat{\T}(k)^\top \Dm(k) \hat{\T}(k)} = \expval{\hat{\T}(0)^\top \Dm(k) \hat{\T}(0)} \nonumber \\
	&=: \mathcal{L}(\Dm(k)) \label{eq:linear-system}
\end{align}
with initial condition $\Dm(0) = \A \A^\top$, and where the second equality holds since, by Assumption~\ref{as:asynchronous-lossy-scenario}, the $\hat{\T}(k)$ are independent and identically distributed.

\begin{lemma}\label{lem:T-randomized-properties}
The eigenvalues of the operator $\mathcal{L}$ are all either strictly inside the unitary circle, or in $1$. In particular, let $\eps_1, \ldots, \eps_H$ be the random vectors such that~\eqref{eq:evolution-T} holds. Then the eigenspace of $\mathcal{L}$ relative to $1$ is $H^2$-dimensional and is generated by $\mathbb{E}[\eps_i \eps_j^\top]$, $i,j = 1, \ldots, H$.
\end{lemma}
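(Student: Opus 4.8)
The plan is to realize the powers $\mathcal{L}^k$ as a concrete random matrix average, pass to its almost sure limit, and read off the spectrum of $\mathcal{L}$ from there; the biorthogonality between the $\v_h$ and the $\eps_h$ of~\eqref{eq:evolution-T} then identifies the eigenspace of $\mathcal{L}$ at $1$. Writing $\mathbold{\Phi}_k := \prod_{\ell=0}^{k-1} \hat{\T}(\ell)$, the fact that the $\hat{\T}(\ell)$ are i.i.d.\ (Assumption~\ref{as:asynchronous-lossy-scenario}), together with the relabeling already exploited in~\eqref{eq:linear-system}, gives $\mathcal{L}^k(\mathbold{M}) = \mathbb{E}[\mathbold{\Phi}_k^\top \mathbold{M}\, \mathbold{\Phi}_k]$ for every matrix $\mathbold{M}$. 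By~\eqref{eq:evolution-T}, $\mathbold{\Phi}_k \to \mathbold{\Phi}_\infty := \sum_{h=1}^H \v_h \eps_h^\top$ almost surely, so the integrand converges a.s.\ to $\mathbold{\Phi}_\infty^\top \mathbold{M}\, \mathbold{\Phi}_\infty$; once the interchange of limit and expectation is justified (see below), this yields $\mathcal{L}^k(\mathbold{M}) \to \mathbb{E}[\mathbold{\Phi}_\infty^\top \mathbold{M}\, \mathbold{\Phi}_\infty] =: \mathcal{L}_\infty(\mathbold{M})$ for every $\mathbold{M}$.

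The uniform control that makes this interchange work comes from a Lyapunov identity. Put $\mathbold{W} := \mathbb{E}[\B(0)]^{-1}$, which is diagonal and positive definite since $0 < p_{ij} < 1$. Expanding $\hat{\T}(0) = \I - \B(0)(\I - \T)$ and using $\B(0)^2 = \B(0)$ and $\mathbold{W}\,\mathbb{E}[\B(0)] = \mathbb{E}[\B(0)]\,\mathbold{W} = \I$, a short computation gives
$$
	\mathcal{L}(\mathbold{W}) = \mathbold{W} - (\I - \T^2).
$$
Since $\T$ is symmetric (Remark~\ref{rem:symmetry-T}) with $\Lambda(\T) \subset (-1,1]$ (Lemma~\ref{lem:T_properties}), we have $\I - \T^2 \succeq 0$ and $\ker(\I - \T^2) = \ker(\I - \T)$, so $\mathcal{L}(\mathbold{W}) \preceq \mathbold{W}$. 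It follows that $k \mapsto \mathbold{\Phi}_k^\top \mathbold{W}\, \mathbold{\Phi}_k$ is a matrix-valued supermartingale, hence $\mathbb{E}[\, v^\top \mathbold{\Phi}_k^\top \mathbold{W}\, \mathbold{\Phi}_k v \,] \le v^\top \mathbold{W} v$ for all $v$ and all $k$; this gives both an $L^2$ bound on $\mathbold{\Phi}_k v$ uniform in $k$ and power boundedness, $\sup_k \|\mathcal{L}^k\| < \infty$.

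Granting $\mathcal{L}^k \to \mathcal{L}_\infty$, the first assertion of the lemma follows from elementary operator theory: an operator on a finite-dimensional space whose iterates converge has all its eigenvalues in the open unit disk except, possibly, a semisimple eigenvalue at $1$, and its iterates converge to the spectral projector onto $\ker(\mathcal{L} - \mathcal{I})$; hence $\ker(\mathcal{L} - \mathcal{I}) = \operatorname{range}(\mathcal{L}_\infty)$. To compute the latter, note that $(\I - \T)\v_h = \0$ forces $\hat{\T}(\ell)\v_h = \v_h$ for \emph{every} realization and every $h$, so $\mathbold{\Phi}_k \v_h = \v_h$ for all $k$ and therefore $\mathbold{\Phi}_\infty \v_h = \v_h$; since $\{\v_h\}_{h=1}^H$ is linearly independent, this forces the deterministic biorthogonality $\eps_i^\top \v_j = \delta_{ij}$. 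Expanding $\mathbold{\Phi}_\infty^\top \mathbold{M}\, \mathbold{\Phi}_\infty$ then gives $\mathcal{L}_\infty(\mathbold{M}) = \sum_{i,j=1}^H (\v_i^\top \mathbold{M} \v_j)\, \mathbb{E}[\eps_i \eps_j^\top]$, so $\operatorname{range}(\mathcal{L}_\infty) = \operatorname{span}\{\mathbb{E}[\eps_i \eps_j^\top] : i,j = 1,\dots,H\}$; and these $H^2$ matrices are linearly independent, because pre- and post-multiplying a vanishing combination $\sum_{i,j} a_{ij}\,\mathbb{E}[\eps_i \eps_j^\top] = \0$ by $\v_k^\top$ and $\v_l$ and using biorthogonality isolates $a_{kl} = 0$. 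Hence $\dim \ker(\mathcal{L} - \mathcal{I}) = H^2$, as claimed.

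The step I expect to be the real obstacle is the interchange of limit and expectation: $\mathbold{\Phi}_k$ converges only almost surely, and because the $\hat{\T}(\ell)$ are \emph{not} nonexpansive in the Euclidean norm, $\mathbold{\Phi}_k$ is not bounded by a deterministic constant, so one must upgrade the $L^2$ bound above to a genuine uniform-integrability statement --- e.g.\ by writing $\mathbold{\Phi}_k = \mathbold{\Phi}_\infty + \mathbold{R}_k$ and controlling the remainder $\mathbold{R}_k$. A partly algebraic shortcut is worth recording: $\mathcal{L}$ and its adjoint $\mathcal{L}^*$ both map the positive semidefinite cone into itself, so the spectral radius $\rho(\mathcal{L})$ is attained at a real eigenvalue of $\mathcal{L}^*$ with a positive semidefinite eigenvector $\mathbold{V}_0$; pairing $\mathbold{V}_0$ (in the Frobenius inner product) with $\mathcal{L}(\mathbold{W}) = \mathbold{W} - (\I - \T^2)$ yields $(\rho(\mathcal{L}) - 1)\langle \mathbold{V}_0, \mathbold{W}\rangle = -\langle \mathbold{V}_0, \I - \T^2\rangle \le 0$, so $\rho(\mathcal{L}) \le 1$; and summing the supermartingale decrements $\langle \z(k) - \bar{\z},\, (\I - \T^2)(\z(k) - \bar{\z})\rangle$ in the quadratic case, together with $\A\A^\top \preceq c\,(\I - \T^2)$ (valid by~\eqref{eq:KernelsInc}), shows directly that $\Dm(k) = \mathcal{L}^k(\A\A^\top) \to \0$.
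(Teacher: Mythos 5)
Your proof has the same backbone as the paper's: both compute $\lim_{k\to\infty}\mathcal{L}^k(\Dm(0))$ by pushing the almost-sure limit~\eqref{eq:evolution-T} through the expectation, arrive at $\sum_{i,j}(\v_i^\top\Dm(0)\v_j)\expval{\eps_i\eps_j^\top}$, and read the spectral structure off the limiting projector. You add two things the paper does not have. First, the biorthogonality $\eps_i^\top\v_j=\delta_{ij}$ (forced, as you note, by $\hat{\T}(\ell)\v_h=\v_h$ holding for \emph{every} realization) and the consequent linear independence of the $H^2$ matrices $\expval{\eps_i\eps_j^\top}$: the paper only exhibits these matrices as a spanning set, so without an argument of this kind the claim that the eigenspace at $1$ is exactly $H^2$-dimensional is not actually established. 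Second, the Lyapunov identity $\mathcal{L}(\mathbold{W})=\mathbold{W}-(\I-\T^2)$ with $\mathbold{W}=\expval{\B(0)}^{-1}$, which checks out (it uses only $\B(0)^2=\B(0)$, commutation of diagonal matrices, and the symmetry of $\T$) and supplies the uniform $L^2$ control and power-boundedness that the paper never provides.

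The obstacle you flag --- interchanging $\lim_k$ and $\expval{\cdot}$ when $\mathbold{\Phi}_k$ converges only almost surely and admits no deterministic bound --- is real, but it is present, silently, in the paper's own proof (the passage to the third line of~\eqref{eq:lim-Delta}); you have located a gap rather than created one. Your own identity closes it where it matters downstream: conditioning and telescoping give $\sum_{k}\expval{\norm{(\I-\T^2)^{1/2}\mathbold{\Phi}_k v}^2}\le v^\top\mathbold{W}v<\infty$, and since $\ker(\I-\T^2)=\ker(\I-\T)\subset\ker(\A^\top)$ implies $\A\A^\top\preceq c\,(\I-\T^2)$ for some $c>0$, this yields $\Dm(k)=\mathcal{L}^k(\A\A^\top)\to\0$ with no interchange at all --- exactly your closing remark, and all that Proposition~\ref{prop:L-to-0} and Proposition~\ref{pr:randomized-linear-convergence} actually require. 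For the lemma in its full generality (arbitrary initial condition, hence the complete spectrum of $\mathcal{L}$) one still needs uniform integrability of the quadratic forms $v^\top\mathbold{\Phi}_k^\top\mathbold{M}\mathbold{\Phi}_k v$, e.g.\ via a fourth-moment bound extracted from $\expval{\hat{\T}(0)^{\otimes 4}}$, or by analyzing the matrix $\L=\expval{\hat{\T}(0)\otimes\hat{\T}(0)}$ of Proposition~\ref{pr:randomized-convergence-rate} directly, in the spirit of the randomized-consensus literature the paper cites.
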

\begin{proof}
Taking the limit for $k \to \infty$ of~\eqref{eq:linear-system} and exploiting the property~\eqref{eq:evolution-T} proved above yields, for any initial condition $\Dm(0)$:
\begin{align}
	\lim_{k \to \infty} \Dm(k) &= \lim_{k \to \infty} \mathcal{L}^k(\Dm(0)) \nonumber \\
	&= \lim_{k \to \infty} \expval{ \bigg( \prod_{\ell = 0}^{k-1} \hat{\T}(\ell) \bigg)^\top \Dm(0) \bigg( \prod_{\ell = 0}^{k-1} \hat{\T}(\ell) \bigg) } \nonumber \\
	&= \expval{ \bigg( \sum_{h = 1}^H \v_h \eps_h^\top \bigg)^\top \Dm(0) \bigg( \sum_{h = 1}^H \v_h \eps_h^\top \bigg) } \nonumber \\
	&= \sum_{i = 1}^H \sum_{j = 1}^H (\v_i^\top \Dm(0) \v_j) \expval{\eps_i \eps_j^\top}. \label{eq:lim-Delta}
\end{align}
This proves that the eigenspace of $\mathcal{L}$ relative to the eigenvalue $1$ is $H^2$ dimensional and it is generated by $\mathbb{E}[\eps_i \eps_j^\top]$, $i, j = 1, \ldots, H$.
\end{proof}

By Lemma~\ref{lem:T_properties} we have $\ker(\I - \T) \subset \ker(\A^\top)$, and so~\eqref{eq:lim-Delta} with initial condition $\Dm(0) = \A \A^\top$ implies that $\A \A^\top$ is orthogonal to the eigenspace generated by the eigenvectors relative to $1$. Thus we have $\lim_{k \to \infty} \mathcal{L}^k(\Dm(0)) = 0$, which proves that the primal error converges linearly to zero. The rate of convergence is characterized by the largest eigenvalue of the linear system $\mathcal{L}$ strictly inside the unitary circle, that is by $\bar{\gamma}_\mathrm{M}$ as defined in~\eqref{eq:bar-gamma}.

\paragraph*{\textit{(iii)} General case}
By the results of point 2) we can write
\begin{equation}\label{eq:bound-with-ess}
	\expval{\norm{\A^\top \prod_{\ell = 0}^{k-1} \hat{\T}(\ell)}^2} \leq C' \bar{\gamma}_{\mathrm{M}}^k
\end{equation}
for some $C' > 0$. Moreover, in the general case, iterating~\eqref{eq:iterated-randomized-update} and exploiting the primal error bound
$$
	\expval{\norm{\x(k+1) - \x^*}} \leq \zeta \sqrt{\expval{\norm{\A^\top (\z(k) - \bar{\z})}^2}},
$$
we obtain \eqref{eq:MainInequality}. Now, using \eqref{eq:bound-with-ess} and the fact that $\B(k)$ are independent and identically distributed, we have 
\begin{align*}
	&\expval{\norm{\x(k+1) - \x^*}} \leq \zeta \norm{\z(0) - \bar{\z}} \sqrt{C' \bar{\gamma}_{\mathrm{M}}^k} + \\ 
	& \zeta \sum_{h = 0}^{k-1} \sqrt{C' \bar{\gamma}_{\mathrm{M}}^{k - h-1}} \, \expval{\norm{\B(0)}} \, \expval{\norm{\o'(\x(h+1) - \x^*)}}
\end{align*}
Similarly to the proof of Proposition~\ref{pr:local-linear-convergence}, we can argue that there exists a sequence of positive numbers $\{ \delta_k \}_{k \geq 1}$ such that $\delta_{k+1} \leq \delta_k$, $\lim_{k \to \infty} \delta_k = 0$ and
$$
	\norm{\o'(\x(k+1) - \x^*)} \leq \delta_{k+1} \norm{\x(k+1) - \x^*}.
$$
Hence we have the following inequality
\begin{align*}
	&\expval{\norm{\x(k+1) - \x^*}} \leq \zeta \norm{\z(0) - \bar{\z}} \sqrt{C' \bar{\gamma}_{\mathrm{M}}^k} + \\ &+ \zeta \sqrt{C'} \expval{\norm{\B(0)}} \sum_{h = 0}^{k-1} \sqrt{\bar{\gamma}_{\mathrm{M}}^{k - h -1}} \delta_{h+1} \expval{\norm{\x(h+1) - \x^*}}
\end{align*}
and with the same argument employed in Appendix~\ref{app:linear_convergence} the proof of Proposition~\ref{pr:randomized-linear-convergence} is complete. \cvd

\begin{remark}
Point \textit{(ii)} of this proof extends to the distributed optimization scenario the results reported in \cite{fagnani_randomized_2008} for the randomized consensus problem.
\end{remark}

\subsection{Proof of Proposition~\ref{pr:randomized-convergence-rate}}\label{app:bound-convergence-rate}
The idea is to introduce a matrix representation of $\mathcal{L}$ and then compute its largest eigenvalue inside the unitary circle.

Let $\vect(\cdot)$ be the vectorization operator that, given a matrix $\mathbold{M} \in \R^{K \times K}$, returns the vector $\vect(\mathbold{M}) \in \R^{K^2}$ having $[\mathbold{M}]_{i,j}$ in position $(i-1) K + j$. A useful property of $\vect$ is that for a triplet of matrices of suitable dimensions we can write $\vect(\mathbold{A B C}) = (\mathbold{C}^\top \otimes \mathbold{A}) \vect(\mathbold{B})$.

\noindent Vectorizing the linear system $\mathcal{L}$ we obtain
\begin{align*}
	\vect(\Dm(k+1)) &= \expval{\hat{\T}(0)^\top \otimes \hat{\T}(0)^\top} \vect(\Dm(k)) \\
	&= \L \vect(\Dm(k))
\end{align*}
where $\bar{\gamma}_{\mathrm{M}}$ of $\mathcal{L}$ coincides with the largest eigenvalue of $\L$ strictly inside the unitary circle.

Using Assumption~\ref{as:asynchronous-lossy-scenario} we now give an explicit formula for $\L$ in terms of $\T$ and the expectation of $\B(0)$. The symmetry of $\T$, see Remark~\ref{rem:symmetry-T}, implies that of $\hat{\T}(0)$ and thus of $\L$. Therefore, omitting the dependence on time in $\B(0)$, we have:
\begin{align*}
	\L &= \mathbb{E}[ (\I- \B+\B\T) \otimes (\I-\B+\B\T)] = \\
	&= \mathbb{E}\Big[\I\otimes \I-\I\otimes \B+\I\otimes \B\T-\B\otimes \I+\B\otimes \B+\\
	&\ \ -\B\otimes \B\T+\B\T\otimes \I-\B\T\otimes \B+\B\T\otimes \B\T\Big],
\end{align*}
and by linearity of the expectation we can focus on each term separately. The first term is clearly equal to itself, while we have $\mathbb{E}[\I\otimes \B] = \I\otimes \mathbb{E}[\B]$, and, similarly, $\mathbb{E}[\B\otimes \I] = \mathbb{E}[\B] \otimes \I$.

\noindent The remaining terms can be computed using the following property of the Kronecker product $(\mathbold{AC} \otimes \mathbold{BD}) = (\mathbold{A} \otimes \mathbold{B}) (\mathbold{C} \otimes \mathbold{D})$ for matrices $\mathbold{A,B,C,D}$ of suitable dimensions:
\begin{itemize}
	\item $\mathbb{E}[\I\otimes \B\T] = \mathbb{E}[(\I \otimes \B) (\I \otimes \T)] = (\I \otimes \mathbb{E}[\B]) (\I \otimes \T) = \I \otimes \mathbb{E}[\B]\T$,
	
	\item $\mathbb{E}[\B\T \otimes \I] = \mathbb{E}[(\B \otimes \I) (\T \otimes \I)] = (\mathbb{E}[\B] \otimes \I) (\T \otimes \I) = \mathbb{E}[\B]\T \otimes \I$,

	\item $\mathbb{E}[\B \otimes \B\T] = \mathbb{E}[(\B \otimes \B) (\I \otimes \T)] = \mathbb{E}[\B \otimes \B] (\I \otimes \T)$,
	
	\item $\mathbb{E}[\B\T \otimes \B] = \mathbb{E}[(\B \otimes \B) (\T \otimes \I)] = \mathbb{E}[\B \otimes \B] (\T \otimes \I)$,

	\item $\mathbb{E}[\B\T \otimes \B\T] = \mathbb{E}[(\B \otimes \B) (\T \otimes \T)] = \mathbb{E}[\B \otimes \B] (\T\otimes \T)$.
\end{itemize}
Summing and rearranging the terms (exploiting the properties of the Kronecker product) we thus prove Proposition~\ref{pr:randomized-convergence-rate}. \cvd


\bibliographystyle{IEEEtran} 
\bibliography{IEEEabrv,references}

\begin{thebibliography}{10}
\providecommand{\url}[1]{#1}
\csname url@samestyle\endcsname
\providecommand{\newblock}{\relax}
\providecommand{\bibinfo}[2]{#2}
\providecommand{\BIBentrySTDinterwordspacing}{\spaceskip=0pt\relax}
\providecommand{\BIBentryALTinterwordstretchfactor}{4}
\providecommand{\BIBentryALTinterwordspacing}{\spaceskip=\fontdimen2\font plus
\BIBentryALTinterwordstretchfactor\fontdimen3\font minus
  \fontdimen4\font\relax}
\providecommand{\BIBforeignlanguage}[2]{{%
\expandafter\ifx\csname l@#1\endcsname\relax
\typeout{** WARNING: IEEEtran.bst: No hyphenation pattern has been}%
\typeout{** loaded for the language `#1'. Using the pattern for}%
\typeout{** the default language instead.}%
\else
\language=\csname l@#1\endcsname
\fi
#2}}
\providecommand{\BIBdecl}{\relax}
\BIBdecl

\bibitem{slavakis_modeling_2014}
K.~Slavakis, G.~B. Giannakis, and G.~Mateos, ``Modeling and {Optimization} for
  {Big} {Data} {Analytics}: ({Statistical}) learning tools for our era of data
  deluge,'' \emph{{IEEE} Signal Process. Mag.}, vol.~31, no.~5, pp. 18--31,
  2014.

\bibitem{bertsekas_parallel_1989}
D.~P. Bertsekas and J.~N. Tsitsiklis, \emph{Parallel and distributed
  computation: numerical methods}.\hskip 1em plus 0.5em minus 0.4em\relax
  Prentice hall Englewood Cliffs, NJ, 1989, vol.~23.

\bibitem{nedic_distributed_2009}
A.~Nedi\'c and A.~Ozdaglar, ``Distributed {Subgradient} {Methods} for
  {Multi}-{Agent} {Optimization},'' \emph{{IEEE} Trans. Autom. Control},
  vol.~54, no.~1, pp. 48--61, 2009.

\bibitem{nedic_convex_2010}
------, \emph{Convex Optimization in Signal Processing and
  Communications}.\hskip 1em plus 0.5em minus 0.4em\relax Cambridge University
  Press, 2010, ch. Cooperative Distributed Multi-Agent Optimization, pp.
  340--386.

\bibitem{nedic_constrained_2010}
A.~Nedi\'c, A.~Ozdaglar, and P.~Parrilo, ``Constrained consensus and
  optimization in multi-agent networks,'' \emph{{IEEE} Trans. Autom. Control},
  vol.~55, no.~4, pp. 922--938, 2010.

\bibitem{lobel_distributed_2011}
I.~Lobel and A.~Ozdaglar, ``Distributed {Subgradient} {Methods} for {Convex}
  {Optimization} {Over} {Random} {Networks},'' \emph{{IEEE} Trans. Autom.
  Control}, vol.~56, no.~6, pp. 1291--1306, 2011.

\bibitem{lee_distributed_2013}
S.~Lee and A.~Nedi\'c, ``Distributed {Random} {Projection} {Algorithm} for
  {Convex} {Optimization},'' \emph{{IEEE} J. Sel. Topics Signal Process.},
  vol.~7, no.~2, pp. 221--229, 2013.

\bibitem{jakovetic_convergence_2014}
D.~Jakovetic, J.~M.~F. Xavier, and J.~M.~F. Moura, ``Convergence {Rates} of
  {Distributed} {Nesterov}-{Like} {Gradient} {Methods} on {Random}
  {Networks},'' \emph{{IEEE} Trans. Signal Process.}, vol.~62, no.~4, pp.
  868--882, 2014.

\bibitem{nedic_distributed_2015}
A.~Nedi\'c and A.~Olshevsky, ``Distributed {Optimization} {Over}
  {Time}-{Varying} {Directed} {Graphs},'' \emph{{IEEE} Trans. Autom. Control},
  vol.~60, no.~3, pp. 601--615, 2015.

\bibitem{shi_extra_2015}
W.~Shi, Q.~Ling, G.~Wu, and W.~Yin, ``{EXTRA}: {An} {Exact} {First}-{Order}
  {Algorithm} for {Decentralized} {Consensus} {Optimization},'' \emph{SIAM J.
  Optimiz.}, vol.~25, no.~2, pp. 944--966, 2015.

\bibitem{yuan_convergence_2016}
K.~Yuan, Q.~Ling, and W.~Yin, ``On the {Convergence} of {Decentralized}
  {Gradient} {Descent},'' \emph{SIAM J. Optimiz.}, vol.~26, no.~3, pp.
  1835--1854, 2016.

\bibitem{xi_dextra_2017}
C.~Xi and U.~A. Khan, ``{DEXTRA}: {A} {Fast} {Algorithm} for {Optimization}
  {Over} {Directed} {Graphs},'' \emph{{IEEE} Trans. Autom. Control}, vol.~62,
  no.~10, pp. 4980--4993, 2017.

\bibitem{nedic_achieving_2017}
A.~Nedi\'c, A.~Olshevsky, and W.~Shi, ``Achieving {Geometric} {Convergence} for
  {Distributed} {Optimization} {Over} {Time}-{Varying} {Graphs},'' \emph{SIAM
  J. Optimiz.}, vol.~27, no.~4, pp. 2597--2633, 2017.

\bibitem{alaviani_distributed_2019}
S.~S. Alaviani and N.~Elia, ``Distributed {Multi}-{Agent} {Convex}
  {Optimization} {Over} {Random} {Digraphs},'' \emph{{IEEE} Trans. Autom.
  Control [to appear]}, 2019.

\bibitem{wei_distributed_2013_1}
E.~Wei, A.~Ozdaglar, and A.~Jadbabaie, ``A distributed newton method for
  network utility maximization - i: Algorithm,'' \emph{{IEEE} Trans. Autom.
  Control}, vol.~58, no.~9, pp. 2162--2175, 2013.

\bibitem{wei_distributed_2013_2}
------, ``A distributed newton method for network utility maximization - part
  ii: Convergence,'' \emph{{IEEE} Trans. Autom. Control}, vol.~58, no.~9, pp.
  2176 -- 2188, 2013.

\bibitem{varagnolo_newton_2016}
D.~Varagnolo, F.~Zanella, A.~Cenedese, G.~Pillonetto, and L.~Schenato,
  ``Newton-{Raphson} {Consensus} for {Distributed} {Convex} {Optimization},''
  \emph{{IEEE} Trans. Autom. Control}, vol.~61, no.~4, pp. 994--1009, 2016.

\bibitem{mokhtari_network_2017}
A.~Mokhtari, Q.~Ling, and A.~Ribeiro, ``Network {Newton} {Distributed}
  {Optimization} {Methods},'' \emph{{IEEE} Trans. Signal Process.}, vol.~65,
  no.~1, pp. 146--161, 2017.

\bibitem{bof_multiagent_2019}
N.~Bof, R.~Carli, G.~Notarstefano, L.~Schenato, and D.~Varagnolo, ``Multiagent
  {Newton}-{Raphson} {Optimization} {Over} {Lossy} {Networks},'' \emph{{IEEE}
  Trans. Autom. Control}, vol.~64, no.~7, pp. 2983--2990, 2019.

\bibitem{boyd_distributed_2010}
S.~Boyd, N.~Parikh, E.~Chu, B.~Peleato, and J.~Eckstein, ``Distributed
  {Optimization} and {Statistical} {Learning} via the {Alternating} {Direction}
  {Method} of {Multipliers},'' \emph{Foundations and Trends® in Machine
  Learning}, vol.~3, no.~1, pp. 1--122, 2010.

\bibitem{eckstein_understanding_2015}
J.~Eckstein and W.~Yao, ``Understanding the convergence of the alternating
  direction method of multipliers: {Theoretical} and computational
  perspectives,'' \emph{Pacific J. Optim.}, vol.~11, no.~4, pp. 619--644, 2015.

\bibitem{davis_convergence_2016}
D.~Davis and W.~Yin, ``Convergence {Rate} {Analysis} of {Several} {Splitting}
  {Schemes},'' in \emph{Splitting {Methods} in {Communication}, {Imaging},
  {Science}, and {Engineering}}, R.~Glowinski, S.~J. Osher, and W.~Yin,
  Eds.\hskip 1em plus 0.5em minus 0.4em\relax Cham: Springer International
  Publishing, 2016, pp. 115--163.

\bibitem{giselsson_linear_2017}
P.~Giselsson and S.~Boyd, ``Linear {Convergence} and {Metric} {Selection} for
  {Douglas}-{Rachford} {Splitting} and {ADMM},'' \emph{{IEEE} Trans. Autom.
  Control}, vol.~62, no.~2, pp. 532--544, 2017.

\bibitem{ryu_primer_2016}
E.~K. Ryu and S.~Boyd, ``A primer on monotone operator methods,'' \emph{Appl.
  Math. Comput.}, vol.~15, no.~1, pp. 3--43, 2016.

\bibitem{bauschke_convex_2017}
H.~H. Bauschke and P.~L. Combettes, \emph{Convex analysis and monotone operator
  theory in {Hilbert} spaces}, 2nd~ed., ser. {CMS} books in mathematics.\hskip
  1em plus 0.5em minus 0.4em\relax Cham: Springer, 2017.

\bibitem{peng_arock_2016}
Z.~Peng, Y.~Xu, M.~Yan, and W.~Yin, ``{ARock}: an {Algorithmic} {Framework} for
  {Asynchronous} {Parallel} {Coordinate} {Updates},'' \emph{SIAM J. Sci.
  Comput.}, vol.~38, no.~5, pp. A2851--A2879, 2016.

\bibitem{wei_convergence_2013}
E.~Wei and A.~Ozdaglar, ``On the {O}(1/k) convergence of asynchronous
  distributed alternating {Direction} {Method} of {Multipliers},'' in
  \emph{2013 {IEEE} {Global} {Conference} on {Signal} and {Information}
  {Processing}}, 2013, pp. 551--554.

\bibitem{mota_dadmm_2013}
J.~a. F.~C. Mota, J.~a. M.~F. Xavier, P.~M.~Q. Aguiar, and M.~P\"uschel,
  ``D-{ADMM}: {A} {Communication}-{Efficient} {Distributed} {Algorithm} {For}
  {Separable} {Optimization},'' \emph{{IEEE} Trans. Signal Process.}, vol.~61,
  no.~10, pp. 2718--2723, 2013.

\bibitem{chang_asynchronous_2016_1}
T.-H. Chang, M.~Hong, W.-C. Liao, and X.~Wang, ``Asynchronous {Distributed}
  {ADMM} for {Large}-{Scale} {Optimization}- {Part} {I}: {Algorithm} and
  {Convergence} {Analysis},'' \emph{{IEEE} Trans. Signal Process.}, vol.~64,
  no.~12, pp. 3118--3130, 2016.

\bibitem{bianchi_coordinate_2016}
P.~Bianchi, W.~Hachem, and F.~Iutzeler, ``A {Coordinate} {Descent}
  {Primal}-{Dual} {Algorithm} and {Application} to {Distributed} {Asynchronous}
  {Optimization},'' \emph{{IEEE} Trans. Autom. Control}, vol.~61, no.~10, pp.
  2947--2957, 2016.

\bibitem{shi_linear_2014}
W.~Shi, Q.~Ling, K.~Yuan, G.~Wu, and W.~Yin, ``On the {Linear} {Convergence} of
  the {ADMM} in {Decentralized} {Consensus} {Optimization},'' \emph{{IEEE}
  Trans. Signal Process.}, vol.~62, no.~7, pp. 1750--1761, 2014.

\bibitem{makhdoumi_convergence_2017}
A.~Makhdoumi and A.~Ozdaglar, ``Convergence {Rate} of {Distributed} {ADMM}
  {Over} {Networks},'' \emph{{IEEE} Trans. Autom. Control}, vol.~62, no.~10,
  pp. 5082--5095, 2017.

\bibitem{iutzeler_explicit_2016}
F.~Iutzeler, P.~Bianchi, P.~Ciblat, and W.~Hachem, ``Explicit {Convergence}
  {Rate} of a {Distributed} {Alternating} {Direction} {Method} of
  {Multipliers},'' \emph{{IEEE} Trans. Autom. Control}, vol.~61, no.~4, pp.
  892--904, 2016.

\bibitem{chang_asynchronous_2016_2}
T.-H. Chang, W.-C. Liao, M.~Hong, and X.~Wang, ``Asynchronous {Distributed}
  {ADMM} for {Large}-{Scale} {Optimization}- {Part} {II}: {Linear}
  {Convergence} {Analysis} and {Numerical} {Performance},'' \emph{{IEEE} Trans.
  Signal Process.}, vol.~64, no.~12, pp. 3131--3144, 2016.

\bibitem{majzoobi_analysis_2018}
L.~Majzoobi, V.~Shah-Mansouri, and F.~Lahouti, ``Analysis of distributed {ADMM}
  algorithm for consensus optimisation over lossy networks,'' \emph{IET Signal
  Process.}, vol.~12, no.~6, pp. 786--794, 2018.

\bibitem{hannah_unbounded_2018}
R.~Hannah and W.~Yin, ``On {Unbounded} {Delays} in {Asynchronous} {Parallel}
  {Fixed}-{Point} {Algorithms},'' \emph{J. Sci. Comput.}, vol.~76, no.~1, pp.
  299--326, 2018.

\bibitem{deng_global_2016}
W.~Deng and W.~Yin, ``On the {Global} and {Linear} {Convergence} of the
  {Generalized} {Alternating} {Direction} {Method} of {Multipliers},'' \emph{J.
  Sci. Comput.}, vol.~66, no.~3, pp. 889--916, 2016.

\bibitem{combettes_stochastic_2019}
P.~L. Combettes and J.-C. Pesquet, ``Stochastic quasi-{Fejér} block-coordinate
  fixed point iterations with random sweeping {II}: mean-square and linear
  convergence,'' \emph{Math. Program.}, vol. 174, no. 1-2, pp. 433--451, 2019.

\bibitem{bastianello_distributed_2018}
N.~Bastianello, M.~Todescato, R.~Carli, and L.~Schenato, ``Distributed
  {Optimization} over {Lossy} {Networks} via {Relaxed} {Peaceman}-{Rachford}
  {Splitting}: a {Robust} {ADMM} {Approach},'' in \emph{2018 {European}
  {Control} {Conference} ({ECC})}, 2018, pp. 477--482.

\bibitem{mesbahi_graph_2010}
M.~Mesbahi and M.~Egerstedt, \emph{Graph theoretic methods in multiagent
  networks}, ser. Princeton series in appled mathematics.\hskip 1em plus 0.5em
  minus 0.4em\relax Princeton, NJ: Princeton Univ. Press, 2010.

\bibitem{rockafellar_convex_1970}
R.~T. Rockafellar, \emph{Convex analysis}.\hskip 1em plus 0.5em minus
  0.4em\relax Princeton university press, 1970.

\bibitem{ghadimi_optimal_2015}
E.~Ghadimi, A.~Teixeira, I.~Shames, and M.~Johansson, ``Optimal parameter
  selection for the alternating direction method of multipliers ({ADMM}):
  quadratic problems,'' \emph{{IEEE} Trans. Autom. Control}, vol.~60, no.~3,
  pp. 644--658, 2015.

\bibitem{combettes_stochastic_2015}
P.~L. Combettes and J.-C. Pesquet, ``Stochastic {Quasi}-{Fejér}
  {Block}-{Coordinate} {Fixed} {Point} {Iterations} with {Random} {Sweeping},''
  \emph{SIAM J. Optimiz.}, vol.~25, no.~2, pp. 1221--1248, 2015.

\bibitem{giselsson_diagonal_2014}
P.~Giselsson and S.~Boyd, ``Diagonal scaling in {Douglas}-{Rachford} splitting
  and {ADMM},'' in \emph{53rd {IEEE} {Conference} on {Decision} and
  {Control}}.\hskip 1em plus 0.5em minus 0.4em\relax Los Angeles, CA, USA:
  IEEE, 2014, pp. 5033--5039.

\bibitem{iutzeler_generic_2019}
F.~Iutzeler and J.~M. Hendrickx, ``A generic online acceleration scheme for
  optimization algorithms via relaxation and inertia,'' \emph{Optim. Methods
  Softw.}, vol.~34, no.~2, pp. 383--405, 2019.

\bibitem{fagnani_randomized_2008}
F.~Fagnani and S.~Zampieri, ``Randomized consensus algorithms over large scale
  networks,'' \emph{{IEEE} J. Sel. Areas Commun.}, vol.~26, no.~4, pp.
  634--649, 2008.

\end{thebibliography}


\vfill

\end{document}